\newenvironment{proof}[1][Proof]{\noindent\textbf{#1.} }{\ \rule{0.5em}{0.5em}}
\newtheorem{De}{Definition}[section]
\newtheorem{Th}[De]{Theorem}
\newtheorem{Pro}[De]{Proposition}
\newtheorem{Le}[De]{Lemma}
\newtheorem{Co}[De]{Corollary}
\newtheorem{Rem}[De]{Remark}
\newtheorem{Ex}[De]{Example}
\newcommand{\im}{\ensuremath{\mathsf{Im\,}}}
\renewcommand{\Im}{\sf Im}
\renewcommand{\ker}{\ensuremath{\mathsf{Ker\,}}}
\newcommand{\Lie}{\ensuremath{\mathsf{Lie}}}
\newcommand{\Lieh}{\ensuremath{\mathfrak{h}}}
\newcommand{\Lieg}{\ensuremath{\mathfrak{g}}}
\newcommand{\Lieq}{\ensuremath{\mathfrak{q}}}
\newcommand{\Liem}{\ensuremath{\mathfrak{m}}}
\newcommand{\Lien}{\ensuremath{\mathfrak{n}}}
\newcommand{\Lief}{\ensuremath{\mathfrak{f}}}
\newcommand{\Lies}{\ensuremath{\mathfrak{s}}}
\newcommand{\Lier}{\ensuremath{\mathfrak{r}}}
\newcommand{\Liet}{\ensuremath{\mathfrak{t}}}
\newcommand{\Liep}{\ensuremath{\mathfrak{p}}}
\newcommand{\Lb}{\ensuremath{\mathsf{Leib}}}
\newcommand{\ze}{{\cal Z}}
\newcommand{\Leib}{\ensuremath{\mathsf{Leib}}}
\newbox\pullbackbox
\newcommand{\n}{{\frak n}}
\newcommand{\g}{\frak g}
\newcommand{\q}{\frak q}
\begin{document}

\centerline{\bf  THE c-NILPOTENT SCHUR $\Lie$-MULTIPLIER OF LEIBNIZ ALGEBRAS}

\bigskip
\bigskip
\centerline{\bf G. R. Biyogmam$^1$ and J. M. Casas$^2$}

\bigskip
\centerline{$^1$ Department of Mathematics, Georgia College \& State University}
\centerline{Campus Box 17 Milledgeville, GA 31061-0490}
\centerline{ {E-mail address}: guy.biyogmam@gcsu.edu}
\bigskip

\centerline{$^2$Dpto. Matemática Aplicada I, Universidad de Vigo,  E. E. Forestal}
\centerline{Campus Universitario A Xunqueira, 36005 Pontevedra, Spain}
\centerline{ {E-mail address}: jmcasas@uvigo.es}
\bigskip

\date{}

\bigskip \bigskip \bigskip

{\bf Abstract:} We introduce the notion of $c$-nilpotent Schur $\Lie$-multiplier of  Leibniz algebras. We obtain exact sequences and formulas of the dimensions of the underlying vector spaces relating the $c$-nilpotent Schur $\Lie$-multiplier of a Leibniz algebra $\Lieq$ and its quotient by a two-sided ideal. These tools are used to characterize $\Lie$-nilpotency and $c$-$\Lie$-stem covers of Leibniz algebras. We prove the existence of $c$-$\Lie$-stem covers for finite dimensional Leibniz algebras and the non existence of $c$-covering on certain $\Lie$-nilpotent Leibniz algebras  with non trivial $c$-nilpotent Schur $\Lie$-multiplier, and we provide characterizations of  $c$-$\Lie$-capability of Leibniz algebras by means of both their $c$-$\Lie$-characteristic ideal and $c$-nilpotent Schur $\Lie$-multiplier.
\bigskip

{\bf 2010 MSC:} 17A32; 17B30; 17B55.
\bigskip

{\bf Key words:} $c$-nilpotent Schur $\Lie$-multiplier; $c$-$\Lie$-central extension; $c$-$\Lie$-stem cover; $c$-$\Lie$-capable; $c$-$\Lie$-characteristic ideal.
\bigskip


\section{Introduction}

The general theory of central extensions relative to a chosen subcategory of a base category was introduced in \cite{JK}, where the simultaneous categorical and Galois theoretic approach is based on, and generalizes, the work of the Fr\"{o}hlich school \cite{Fro, FC, Lue} which focused in varieties of $\Omega$-groups,  was considered in the context of semi-abelian categories \cite{JMT} relative to a Birkhoff subcategory in \cite{CVDL, EVDL}. Examples like groups vs. abelian groups, Lie algebras vs. vector spaces are absolute, meaning that they fit in the general theory when the considered Birkhoff subcategory is the subcategory of all abelian objects. A non-absolute example is the category of Leibniz algebras together with the Birkhoff subcategory of Lie algebras. The general theory provides the notions of relative central extension and relative commutator with respect to the Liezation functor $(-)_{\Lie} : \Leib \to \Lie$ which assigns to a Leibniz algebra $\Lieg$ the Lie algebra ${\Lieg_\Lie} = \Lieg / \Lieg^{\rm ann}$, where $ \Lieg^{\rm ann} = \langle \{ [x,x] : x \in \Lieg \} \rangle$.

In the recent papers  \cite{BC, CKh, CI} authors approached the relative theory of  Leibniz algebras with respect to the Liezation functor,  yielding to the introduction of new notions of central extensions, capability, nilpotency, stem cover, isoclinism and Schur multiplier relative to the Liezation functor, the so-called $\Lie$-central extensions, $\Lie$-capability, $\Lie$-nilpotency, $\Lie$-stem cover, $\Lie$-isoclinism and Schur $\Lie$-multiplier.

It is well-known the interplay between the Schur multiplier \cite{Sch} and other mathematical concepts like projective representations, central extensions, efficient presentations or homology. The Schur multiplier was generalized by Baer \cite{Ba1, Ba2, Ba3} to any variety of nilpotent groups. When ${\cal V}$ is the variety of nilpotent Lie algebras of class at most $c \geq 1$, then the Baer-invariant $M^{(c)}(L) = \frac{R \cap \gamma_{c+1}(F)}{\gamma_{c+1}(F,R)}$  (that is, the quotient doesn't depend on the chosen free presentation \cite{EVDL}), where $0 \to R \to F \to L \to 0$ is a free presentation of the Lie algebra $L$ and $\gamma_{c+1}(-)$ are the $(c+1)$-terms of the lower central series, was introduced in \cite{SAE} and later developed, among others, in \cite{Ar, Ar1, AR, RiA, RS, SA,SR}.

Our goal in this paper is to introduce the relative notion of $c$-nilpotent Schur multiplier, the Baer-invariant called $c$-nilpotent Schur $\Lie$-multiplier,  ${\cal M}_{\Lie}^{(c)}(\Lieq) = \frac{\Lier \cap \gamma_{c+1}^{\Lie}(\Lief)}{\gamma_{c+1}^{\Lie}(\Lief,\Lier)}$, where $\gamma_{c+1}^{\Lie}(-)$ are the $(c+1)$-terms of the $\Lie$-lower central series of $\Lieq$ \cite{CKh} and $0 \to \Lier \to \Lief \to \Lieq \to 0$ is a free presentation. Then we apply it to characterize $\Lie$-nilpotency of  Leibniz algebras and study $c$-$\Lie$-stem covers and $c$-$\Lie$ capabality of Leibniz algebras.

The paper is organized as follows: in section \ref{preliminaries}, we provide preliminary results on Leibniz algebras from \cite{CKh} which are needed through the paper. In section \ref{Schur multiplier}, we introduce the notion of $c$-nilpotent Schur $\Lie$-multiplier $(c\geq1)$ and we use it along with the Baer-invariant $\gamma_{c+1}^{\Lie^{ \ast}}({\Lieq }) = \frac{\gamma_{c+1}^{\Lie}(\Lief)}{\gamma_{c+1}^{\Lie}( \Lief, \Lier)}$, where $0 \to \Lier \to \Lief \to \Lieq \to 0$ is a free presentation, to characterize $\Lie$-nilpotency of class $c$ for a given Leibniz algebra. In section \ref{dimension}, we determine exact sequences providing several results on the dimension of the $c$-nilpotent Schur $\Lie$ -multiplier in the case when the Leibniz algebra is finite dimensional.  In section \ref{stem}, we use $c$-nilpotent Schur $\Lie$-multiplier to provide a characterization of $c$-$\Lie$-stem extensions, and prove the existence of $c$-$\Lie$-stem covers for finite dimensional Leibniz algebras, and the non existence of $c$-$\Lie$-covering on certain nilpotent Leibniz algebras  with non trivial $c$-nilpotent Schur $\Lie$-multiplier. Finally, in section \ref{capability} we  study $c$-$\Lie$-capability. In particular we provide characterizations of  $c$-$\Lie$-capability of Leibniz algebras by means of both their $c$-$\Lie$-characteristic ideal and $c$-nilpotent Schur $\Lie$-multiplier.


\section{Preliminary results on Leibniz algebras} \label{preliminaries}
We fix $\mathbb{K}$ as a ground field such that $\frac{1}{2} \in \mathbb{K}$. All vector spaces and tensor products are considered over $\mathbb{K}$.

\subsection{Background on Leibniz algebras}

A \emph{Leibniz algebra} \cite{Lo 1, Lo 2, LP} is a $\mathbb{K}$-vector space $\Lieq$  equipped with a linear map $[-,-] : \Lieq \otimes \Lieq \to \Lieq$, usually called the \emph{Leibniz bracket} of $\Lieq$,  satisfying the \emph{Leibniz identity}:
\[
 [x,[y,z]]= [[x,y],z]-[[x,z],y], \quad x, y, z \in \Lieq.
\]
Leibniz algebras  constitute a variety of $\Omega$-groups \cite{Hig}, hence it is a semi-abelian variety \cite{CVDL, JMT} denoted by {\Leib}, whose morphisms are linear maps that preserve the Leibniz bracket.

 A subalgebra ${\Lieh}$ of a Leibniz algebra ${\Lieq}$ is said to be \emph{left (resp. right) ideal} of ${\Lieq}$ if $ [h,q]\in {\Lieh}$  (resp.  $ [q,h]\in {\Lieh}$), for all $h \in {\Lieh}$, $q \in {\Lieq}$. If ${\Lieh}$ is both
left and right ideal, then ${\Lieh}$ is called \emph{two-sided ideal} of ${\Lieq}$. In this case $\Lieq/\Lieh$ naturally inherits a Leibniz algebra structure.

For a Leibniz algebra ${\Lieq}$, we denote by ${\Lieq}^{\rm ann}$ the subspace of ${\Lieq}$ spanned by all elements of the form $[x,x], x \in \Lieq$.

Given a Leibniz algebra $\Lieq$, it is clear that the quotient ${\Lieq}_ {_{\rm Lie}}=\Lieq/{\Lieq}^{\rm ann}$ is a Lie algebra. This defines the so-called  \emph{Liezation functor} $(-)_{\Lie} : {\Leib} \to {\Lie}$, which assigns to a Leibniz algebra $\Lieq$ the Lie algebra ${\Lieq}_{_{\rm Lie}}$. Moreover, the canonical surjective homomorphism  ${\Lieq} \twoheadrightarrow {\Lieq}_ {_{\rm Lie}}$ is universal among all homomorphisms from $\Lieq$ to a Lie algebra, implying that the Liezation functor is left adjoint to the inclusion functor ${\Lie} \hookrightarrow {\Leib}$.

Since $\Lie$ is a subvariety of $\Leib$, then it is a Birkhoff subcategory of $\Leib$.

 For a Leibniz algebra {\Lieq} and two-sided ideals  ${\Liem}$ and ${\Lien}$ of  ${\Lieq}$, the \Lie-{\it centralizer} of ${\Liem}$ and ${\Lien}$ over  ${\Lieq}$ is
\[
C_{\Lieq}^{\Lie}({\Liem} , {\Lien}) = \{q \in {\Lieq} \mid  \; [q, m] + [m,q] \in {\Lien}, \; \text{for all} \;
m \in {\Liem} \} \; .
\]

 The \Lie-{\it commutator} $[\Liem,\Lien]_{\Lie}$ is the subspace  of $\Lieq$ spanned by all elements of the form $[m,n]+[n,m]$, $m \in \Liem$, $n \in \Lien$.

 In particular, the two-sided ideal $C_{\Lieq}^{\Lie}(\Lieq , 0)$ is the \emph{$\Lie$-center} of the Leibniz algebra $\Lieq$ and it will be denoted by $Z_{\Lie}(\Lieq)$, that is,

\[
Z_{\Lie}(\Lieq) =  \{ z\in \Lieq\,|\,\text{$[q,z]+[z,q]=0$ for all $q\in \Lieq$}\}.
\]

\begin{Pro} \cite[Example 1.9]{CVDL09}
Given an extension of Leibniz algebras $f : \Lieg \twoheadrightarrow \Lieq$ with ${\Lien} =  \ker(f)$, the following conditions are equivalent:
\begin{enumerate}
\item[$(a)$] $f : \Lieg \twoheadrightarrow \Lieq$ is $\Lie$-central;
\item[$(b)$] $\Lien \subseteq Z_{\Lie}(\Lieg)$;
\item[$(c)$] $[\Lien,\Lieg]_{\Lie} = 0$.
\end{enumerate}
\end{Pro}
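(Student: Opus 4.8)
The plan is to establish the two equivalences $(b)\Leftrightarrow(c)$ and $(a)\Leftrightarrow(c)$; chaining them gives the statement. The first is a purely elementwise matter, whereas the second only requires unwinding the meaning of ``$\Lie$-central'' in the present (relative, non-absolute) setting, where the reference subcategory is $\Lie$ rather than the abelian objects.

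For $(b)\Leftrightarrow(c)$, first observe that $\Lien=\ker(f)$ is automatically a two-sided ideal of $\Lieg$, so both $Z_{\Lie}(\Lieg)$ and $[\Lien,\Lieg]_{\Lie}$ are legitimately defined. By the explicit description of the $\Lie$-center, the inclusion $\Lien\subseteq Z_{\Lie}(\Lieg)$ says exactly that $[g,n]+[n,g]=0$ for all $g\in\Lieg$ and all $n\in\Lien$. On the other hand $[\Lien,\Lieg]_{\Lie}$ is, by definition, the subspace of $\Lieg$ spanned by the elements $[n,g]+[g,n]$ with $n\in\Lien$, $g\in\Lieg$, so $[\Lien,\Lieg]_{\Lie}=0$ holds precisely when every such spanning element vanishes. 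Hence $(b)$ and $(c)$ are the same condition.

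For $(a)\Leftrightarrow(c)$, recall that $\Lie$ is a Birkhoff subcategory of $\Leib$ with reflector the Liezation functor $(-)_{\Lie}$, and that in this semi-abelian framework an extension $f:\Lieg\twoheadrightarrow\Lieq$ is $\Lie$-central exactly when the relative commutator of $\ker(f)$ with $\Lieg$ associated with the reflector is trivial. The point left to check is the identification of that relative commutator of two two-sided ideals $\Liem,\Lien$ of $\Lieg$ with the concrete $\Lie$-commutator $[\Liem,\Lien]_{\Lie}=\langle[m,n]+[n,m]\rangle$ recalled above: once this is granted, $(a)$ becomes precisely $[\ker(f),\Lieg]_{\Lie}=0$, i.e.\ $(c)$. (If one takes $[\ker(f),\Lieg]_{\Lie}=0$ as the working definition of $\Lie$-centrality, as in \cite{CKh,BC}, then $(a)\Leftrightarrow(c)$ is immediate.) The only step that is not a direct translation of definitions is this identification of the categorical relative commutator with $\langle[m,n]+[n,m]\rangle$ — the place where the Leibniz identity and the two-sidedness of $\ker(f)$ actually enter — so that is the step I expect to require the most care; everything else is bookkeeping with the definitions of $Z_{\Lie}$, of $[-,-]_{\Lie}$, and of $\Lie$-central extension.
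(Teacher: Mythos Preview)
Your argument is correct: $(b)\Leftrightarrow(c)$ is an immediate unwinding of the definitions of $Z_{\Lie}(\Lieg)$ and $[\Lien,\Lieg]_{\Lie}$, and $(a)\Leftrightarrow(c)$ is either tautological (if $\Lie$-centrality is taken to mean $[\ker(f),\Lieg]_{\Lie}=0$) or reduces to the known identification of the categorical relative commutator with the concrete $\Lie$-commutator, which you correctly flag as the only step with real content.

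Note, however, that the paper does not give its own proof of this proposition: it is stated with a citation to \cite[Example~1.9]{CVDL09} and no proof is supplied in the present paper. So there is nothing to compare your approach against here; you have simply filled in what the paper takes for granted by reference.
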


\subsection{$\Lie$-nilpotent Leibniz algebras} \label{nil}

\begin{De} \cite[Definition 5]{CKh}
Let ${\Liem}$ be a two-sided ideal of a Leibniz algebra ${\Lieq}$. A series from ${\Liem}$ to ${\Lieq}$ is a finite sequence of two-sided ideals ${\Liem}_i$, $0 \leq i \leq k$, of ${\Lieq}$ such that
\[
{\Liem}= {\Liem}_0 \trianglelefteq {\Liem}_1 \trianglelefteq \dots \trianglelefteq {\Liem}_{k-1} \trianglelefteq
{\Liem}_k ={\Lieq} \; .
\]
$k$ is called the length of this series.

   A series from $\Liem$ to $\Lieq$ of length $k$ is said to be $\Lie$-central (resp. $\Lie$-abelian) if $[{\Liem}_i, {\Lieq}]_{\Lie} \subseteq {\Liem}_{i-1}$,  or equivalently ${\Liem}_i/{\Liem}_{i-1} \subseteq Z_{\Lie}({\Lieq}/{\Liem}_{i-1})$ (resp. if $[\Liem_i,\Liem_i]_{\Lie} \subseteq \Liem_{i-1}$, or equivalently $[\Liem_i/\Liem_{i-1}, \Liem_i/\Liem_{i-1}]_{\Lie} = 0$) for $1\leq i\leq k$ .

A series from $0$ to ${\Lieq}$ is called a series of the Leibniz algebra ${\Lieq}$.
\end{De}

\begin{De} \cite[Definition 11]{CKh}
Let ${\Lien}$ be a two-sided ideal of a Leibniz algebra $\Lieq$. The lower $\Lie$-central series of $\Lieq$ relative to ${\Lien}$ is the sequence
\[
\cdots \trianglelefteq {\gamma_i^{\Lie}(\Lieq,\Lien)} \trianglelefteq \cdots \trianglelefteq \gamma_2^{\Lie}(\Lieq,\Lien)  \trianglelefteq {\gamma_1^{\Lie}(\Lieq,\Lien)}
\]
of two-sided ideals of $\Lieq$ defined inductively by
\[
{\gamma_1^{\Lie}(\Lieq,\Lien)} = {\Lien} \quad \text{and} \quad \gamma_i^{\Lie}(\Lieq,\Lien) =[{\gamma_{i-1}^{\Lie}(\Lieq,\Lien)},{\Lieq}]_{\Lie}, \quad   i \geq 2.
\]
The Leibniz algebra $\Lieq$ is said to be $\Lie$-nilpotent relative to $\Lien$ of class $c$ if\ $\gamma_{c+1}^{\Lie}(\Lieq, \Lien) = 0$ and $\gamma_c^{\Lie}(\Lieq, \Lien) \neq 0$.
\end{De}

Note that $[{\gamma_i^{\Lie}(\Lieq,\Lien)}/{\gamma_{i+1}^{\Lie}(\Lieq,\Lien)},{\gamma_i^{\Lie}(\Lieq,\Lien)}/{\gamma_{i+1}^{\Lie}(\Lieq,\Lien)}]_{\Lie}={0}$. When ${\Lien} = {\Lieq}$ we obtain  Definition 9 in \cite{CKh} and we will use the notation $\gamma_i^{\Lie}(\Lieq)$ instead of $\gamma_i^{\Lie}(\Lieq,\Lieq), 1 \leq i \leq n$.
If $\varphi : \Lieg \to \Lieq$ is a homomorphism of Leibniz algebras such that $\varphi({\Liem}) \subseteq {\Lien}$, where ${\Liem}$ is a two-sided ideal of ${\Lieg}$ and ${\Lien}$ a two-sided ideal of ${\Lieq}$, then $\varphi(\gamma_{i}^{\Lie}({\Lieg}, {\Liem})) \subseteq \gamma_{i}^{\Lie}({\Lieq}, {\Lien}), i \geq 1$.

\begin{De} \cite[Definition 10]{CKh}
The upper $\Lie$-central series of a Leibniz algebra ${\Lieq}$ is the sequence of
 two-sided ideals
 \[
{\ze}_0^{\Lie}({\Lieq}) \trianglelefteq {\ze}_1^{\Lie}({\Lieq}) \trianglelefteq \cdots \trianglelefteq {\ze}_i^{\Lie}({\Lieq}) \trianglelefteq \cdots
\]
 defined inductively by
\[
{\ze}_0^{\Lie}({\Lieq}) = 0 \quad \text{and} \quad
 {\ze}_{i}^{\Lie}({\Lieq}) = C_{\Lieq}^{\Lie}({\Lieq},{\ze}_{i-1}^{\Lie}({\Lieq})) , \  i \geq 1 .
 \]
\end{De}

\begin{Th} \cite[Theorem 4]{CKh}
  A Leibniz algebra ${\Lieq}$ is $\Lie$-nilpotent with class of $\Lie$-nilpotency $k$ if and only if
${\ze}_k^{\Lie}({\Lieq}) = {\Lieq}$ and ${\ze}_{k-1}^{\Lie}({\Lieq}) \ne {\Lieq}$.
\end{Th}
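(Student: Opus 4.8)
The plan is to prove the two implications separately, both by induction, using the standard duality between the lower and upper central series. First I would record the key functorial observation that governs the interaction between the two series: for any two-sided ideal $\Liem \trianglelefteq \Lieq$, one has $\gamma_{i+1}^{\Lie}(\Lieq) \subseteq \Liem$ if and only if $\gamma_{i}^{\Lie}(\Lieq/\Liem) = 0$ in $\Lieq/\Liem$, which follows directly from the inductive definition of $\gamma_{\bullet}^{\Lie}$ together with the fact (noted after Definition~11 in \cite{CKh} and recalled in the excerpt) that homomorphisms of Leibniz algebras carry $\gamma_i^{\Lie}$ into $\gamma_i^{\Lie}$; applied to the projection $\Lieq \twoheadrightarrow \Lieq/\Liem$ and a set-theoretic section this gives both inclusions. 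Similarly, unwinding the definition ${\ze}_i^{\Lie}(\Lieq) = C_{\Lieq}^{\Lie}(\Lieq, {\ze}_{i-1}^{\Lie}(\Lieq))$, an element $q$ lies in ${\ze}_i^{\Lie}(\Lieq)$ iff $[q,x]+[x,q] \in {\ze}_{i-1}^{\Lie}(\Lieq)$ for all $x$, i.e.\ iff the image of $q$ in $\Lieq/{\ze}_{i-1}^{\Lie}(\Lieq)$ is $\Lie$-central; equivalently ${\ze}_i^{\Lie}(\Lieq)/{\ze}_{i-1}^{\Lie}(\Lieq) = Z_{\Lie}\big(\Lieq/{\ze}_{i-1}^{\Lie}(\Lieq)\big)$.

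For the implication ``lower terminates $\Rightarrow$ upper reaches $\Lieq$'': assuming $\gamma_{k+1}^{\Lie}(\Lieq) = 0$, I would prove by downward induction on $j$ that $\gamma_{k+1-j}^{\Lie}(\Lieq) \subseteq {\ze}_j^{\Lie}(\Lieq)$ for $0 \leq j \leq k$. The base case $j=0$ is the hypothesis. For the inductive step, suppose $\gamma_{k+1-j}^{\Lie}(\Lieq) \subseteq {\ze}_j^{\Lie}(\Lieq)$; then $[\gamma_{k-j}^{\Lie}(\Lieq), \Lieq]_{\Lie} = \gamma_{k+1-j}^{\Lie}(\Lieq) \subseteq {\ze}_j^{\Lie}(\Lieq)$, which says precisely that for every $x \in \gamma_{k-j}^{\Lie}(\Lieq)$ and every $q \in \Lieq$ we have $[x,q]+[q,x] \in {\ze}_j^{\Lie}(\Lieq)$, i.e.\ $x \in C_{\Lieq}^{\Lie}(\Lieq, {\ze}_j^{\Lie}(\Lieq)) = {\ze}_{j+1}^{\Lie}(\Lieq)$. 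Taking $j = k$ yields $\gamma_1^{\Lie}(\Lieq) = \Lieq \subseteq {\ze}_k^{\Lie}(\Lieq)$, hence ${\ze}_k^{\Lie}(\Lieq) = \Lieq$; and if one had ${\ze}_{k-1}^{\Lie}(\Lieq) = \Lieq$ a symmetric argument (or the converse implication below) would force $\gamma_k^{\Lie}(\Lieq) = 0$, contradicting the assumption that the $\Lie$-nilpotency class is exactly $k$.

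For the converse: assuming ${\ze}_k^{\Lie}(\Lieq) = \Lieq$, I would prove by induction on $j$ that $\gamma_{j+1}^{\Lie}(\Lieq) \subseteq {\ze}_{k-j}^{\Lie}(\Lieq)$ for $0 \leq j \leq k$. The base case $j=0$ reads $\gamma_1^{\Lie}(\Lieq) = \Lieq = {\ze}_k^{\Lie}(\Lieq)$. For the step, from $\gamma_{j+1}^{\Lie}(\Lieq) \subseteq {\ze}_{k-j}^{\Lie}(\Lieq)$ we get $\gamma_{j+2}^{\Lie}(\Lieq) = [\gamma_{j+1}^{\Lie}(\Lieq), \Lieq]_{\Lie} \subseteq [{\ze}_{k-j}^{\Lie}(\Lieq), \Lieq]_{\Lie}$, and since ${\ze}_{k-j}^{\Lie}(\Lieq)/{\ze}_{k-j-1}^{\Lie}(\Lieq)$ lies in the $\Lie$-center of the quotient we have $[{\ze}_{k-j}^{\Lie}(\Lieq), \Lieq]_{\Lie} \subseteq {\ze}_{k-j-1}^{\Lie}(\Lieq)$, giving $\gamma_{j+2}^{\Lie}(\Lieq) \subseteq {\ze}_{k-1-j}^{\Lie}(\Lieq)$. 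Setting $j = k$ gives $\gamma_{k+1}^{\Lie}(\Lieq) \subseteq {\ze}_0^{\Lie}(\Lieq) = 0$, so $\Lieq$ is $\Lie$-nilpotent of class at most $k$; and if the class were $k' < k$ then by the first implication ${\ze}_{k'}^{\Lie}(\Lieq) = \Lieq$ with $k' \leq k-1$, forcing ${\ze}_{k-1}^{\Lie}(\Lieq) = \Lieq$, contrary to hypothesis. Hence the class is exactly $k$.

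The only real subtlety — the main obstacle — is the bookkeeping needed to extract the \emph{exactness} of the class (the ``$\ne$'' clauses) rather than just an upper bound; this is handled by feeding each implication into the other, so it is worth being careful that the two inductions are genuinely independent and do not produce a circular argument. Everything else is a routine transcription of the classical group- and Lie-theoretic argument into the $\Lie$-relative setting, using only that $[-,-]_{\Lie}$ is additive in each ideal argument and the equivalences recorded after the definitions of the lower and upper $\Lie$-central series.
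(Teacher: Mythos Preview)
Your argument is correct and is exactly the classical duality proof between the lower and upper central series, transported verbatim to the $\Lie$-relative setting; the two standalone inductions establish the inclusions $\gamma_{k+1-j}^{\Lie}(\Lieq)\subseteq{\ze}_j^{\Lie}(\Lieq)$ and $\gamma_{j+1}^{\Lie}(\Lieq)\subseteq{\ze}_{k-j}^{\Lie}(\Lieq)$, and the cross-references for the ``$\ne$'' clauses are not circular since each uses only the \emph{equality} half of the opposite direction.

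There is, however, nothing to compare against in this paper: the theorem is not proved here but merely quoted from \cite[Theorem~4]{CKh}, so the paper contains no proof of its own. Your write-up is precisely the expected one and would serve as a self-contained substitute for that citation.
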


\section{The c-nilpotent Schur \Lie-multiplier} \label{Schur multiplier}

 In this section we introduce the Baer-invariants $c$-nilpotent Schur $\Lie$-multiplier and  $\gamma_{c+1}^{\Lie^{ \ast}}(-)$  of a Leibniz algebra. Then we use them to characterize $\Lie$-nilpotency of Leibniz algebras.

Let $0 \to \Lier \to \Lief \stackrel{\rho}\to \Lieq \to 0$ be a free presentation of a Leibniz algebra $\Lieq$. We call $c$-nilpotent Schur $\Lie$-multiplier  of $\Lieq \ (c \geq 1)$ to the term
\begin{equation} \label{Schur formula}
{\cal M}_{\Lie}^{(c)} (\Lieq):= \frac{\Lier \cap {\gamma_{c+1}^{\Lie}(\Lief)}}{{\gamma_{c+1}^{\Lie}(\Lief,\Lier)}}
\end{equation}

Since {\Lb} is a category of interest (see \cite{CDL}), hence is a category of $\Omega$-groups, and following Proposition 4.3.2 in \cite{Huq} we can conclude that the collection of all nilpotent objects of class $\leq c$ in {\Lb} form a variety. Now following  \cite[Proposition 7.8]{EVDL}, it can be showed that ${\cal M}_{\Lie}^{(c)} (\Lieq)$ and $\gamma_{c+1}^{\Lie^{ \ast}}({\Lieq })= \frac{\gamma_{c+1}^{\Lie}({\Lief})}{\gamma_{c+1}^{\Lie}({\Lief},{\Lier})}$ are Baer-invariants, which means that their definitions do not depend on the choice of the  free presentation.

\begin{Rem}
${\cal M}_{\Lie}^{(1)} (\Lieq)$ is the Schur $\Lie$-multiplier of a Leibniz algebra $\Lieq$  (see \cite{BC, CI}).
\end{Rem}

\begin{De}
An exact sequence of Leibniz algebras $0 \to \Lien \to \Lieg \stackrel{\pi} \to \Lieq \to 0$ is said to be a $c$-$\Lie$-central extension if $\gamma_{c+1}^{\Lie}(\Lieg, \Lien)=0$.
\end{De}

\begin{Ex}\ \label{example Lie stem}
\begin{enumerate}

\item[(a)]
Let $\Lieg$ and $\Lieq$ be  four and two-dimensional Leibniz algebra with respective $\mathbb{K}$-linear bases $\{a_1, a_2, a_3, a_4\}$  and $\{e_1,e_2 \}$, with the Leibniz brackets given respectively by $[a_1,a_2] = a_3,  [a_2,a_2] =a_4, [a_1,a_3]= a_4$ and $[e_2,e_2] =  e_1$ and zero elsewhere ($\Lieg$ is a $\Lie$-nilpotent Leibniz algebra of class 3 and $\Lieq$ is a Lie-nilpotent Leibniz algebra of class 2 \cite{CI}). Then the surjective homomorphism of Leibniz algebras $f : \Lieg \twoheadrightarrow \Lieq$ defined by $f(a_1)=e_2$, $f(a_2)=e_1, f(a_3)=0$ and $f(a_4)=0$ is a $2$-$\Lie$-central extension.

\item[(b)]
Let $\Lieg$ and $\Lieq$ be  four and two-dimensional Leibniz algebra with respective $\mathbb{K}$-linear bases $\{a_1, a_2, a_3, a_4\}$  and $\{e_1,e_2 \}$, with the Leibniz brackets given respectively by $[a_1,a_1] = a_2,  [a_1,a_2] =a_3, [a_1,a_3]= a_4$ and $[e_2,e_2] =  e_1$ and zero elsewhere ($\Lieg$ is a $\Lie$-nilpotent Leibniz algebra of class 4 and $\Lieq$ is a $\Lie$-nilpotent Leibniz algebra of class 2 \cite{CI}). Then the surjective homomorphism of Leibniz algebras $f : \Lieg \twoheadrightarrow \Lieq$ defined by $f(a_1)=e_2$, $f(a_2)=e_1, f(a_3)=0$ and $f(a_4)=0$ is a $2$-$\Lie$-central extension.
\end{enumerate}
\end{Ex}

\begin{Pro} \label{inclusion}
The exact sequence of Leibniz algebras $0 \to \Lien \to \Lieg \stackrel{\pi} \to \Lieq \to 0$ is a  $c$-$\Lie$-central extension if and only if $\Lien \subseteq {\ze}_c^{\Lie}(\Lieg)$
\end{Pro}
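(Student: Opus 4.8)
The plan is to unwind the definition of a $c$-$\Lie$-central extension, namely $\gamma_{c+1}^{\Lie}(\Lieg,\Lien)=0$, and compare it term by term with the defining recursion of the upper $\Lie$-central series. First I would recall the two relevant recursions: on the one hand $\gamma_1^{\Lie}(\Lieg,\Lien)=\Lien$ and $\gamma_{i}^{\Lie}(\Lieg,\Lien)=[\gamma_{i-1}^{\Lie}(\Lieg,\Lien),\Lieg]_{\Lie}$; on the other hand ${\ze}_0^{\Lie}(\Lieg)=0$ and ${\ze}_{i}^{\Lie}(\Lieg)=C_{\Lieg}^{\Lie}(\Lieg,{\ze}_{i-1}^{\Lie}(\Lieg))$. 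The key observation that makes these fit together is the adjunction between the $\Lie$-commutator and the $\Lie$-centralizer: for two-sided ideals $\Liem,\Lien$ of $\Lieg$ one has $[\Liem,\Lieg]_{\Lie}\subseteq\Lien$ if and only if $\Liem\subseteq C_{\Lieg}^{\Lie}(\Lieg,\Lien)$, which is immediate from the definitions of both sides (both say $[q,m]+[m,q]\in\Lien$ for all $q\in\Lieg$, $m\in\Liem$).

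The argument then proceeds by a finite induction. I claim that for every $i$ with $0\le i\le c$ one has the equivalence
\[
\gamma_{i+1}^{\Lie}(\Lieg,\Lien)=0 \iff \Lien\subseteq{\ze}_{i}^{\Lie}(\Lieg).
\]
For $i=0$ this is the trivial statement $\Lien=\gamma_1^{\Lie}(\Lieg,\Lien)=0\iff\Lien\subseteq{\ze}_0^{\Lie}(\Lieg)=0$. Actually it is cleaner to run the induction directly on the nesting: assume the statement for $i-1$, i.e. $\gamma_{i}^{\Lie}(\Lieg,\Lien)=0\iff\Lien\subseteq{\ze}_{i-1}^{\Lie}(\Lieg)$. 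Since $\gamma_{i+1}^{\Lie}(\Lieg,\Lien)=[\gamma_{i}^{\Lie}(\Lieg,\Lien),\Lieg]_{\Lie}$, this vanishes iff $\gamma_{i}^{\Lie}(\Lieg,\Lien)\subseteq C_{\Lieg}^{\Lie}(\Lieg,0)=Z_{\Lie}(\Lieg)$ — but that is not quite the inductive hypothesis, so instead I would phrase the induction as: $\gamma_{i+1}^{\Lie}(\Lieg,\Lien)=0$ iff $\Lien\subseteq{\ze}_i^{\Lie}(\Lieg)$, using that $\gamma_{i+1}^{\Lie}(\Lieg,\Lien)\subseteq\Liem$ iff $\Lien\subseteq{\ze}_i^{\Lie}(\Lieg,\Liem)$ (the relative version), taking $\Liem=0$. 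The homomorphism-compatibility remark in Section \ref{preliminaries} and the explicit descriptions of $\gamma^{\Lie}$ and ${\ze}^{\Lie}$ are all that is needed; everything reduces to the commutator–centralizer adjunction applied $c$ times.

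The main obstacle, such as it is, is bookkeeping: making sure the indices in the two series are aligned so that "$\gamma_{c+1}$ vanishes" corresponds to "$\Lien$ lands in ${\ze}_c$" rather than ${\ze}_{c-1}$ or ${\ze}_{c+1}$, and checking the base case of the induction honestly. One clean way to avoid index trouble is to prove the sharper statement $\gamma_{i+1}^{\Lie}(\Lieg,\Lien)\subseteq{\ze}_{j}^{\Lie}(\Lieg)\iff\Lien\subseteq{\ze}_{i+j}^{\Lie}(\Lieg)$ by induction on $i$ for all $j\ge0$ simultaneously, and then specialise to $j=0$ with $i=c$; the inductive step is exactly one application of the adjunction $[\Liem,\Lieg]_{\Lie}\subseteq{\ze}_{j-1}^{\Lie}(\Lieg)\iff\Liem\subseteq{\ze}_{j}^{\Lie}(\Lieg)$. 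For the case $c=1$ this recovers Proposition~\ref{inclusion} as the statement that $\pi$ is $\Lie$-central iff $\Lien\subseteq Z_{\Lie}(\Lieg)$, consistent with the earlier Proposition \cite[Example 1.9]{CVDL09}, which serves as a sanity check.
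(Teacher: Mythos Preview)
Your approach is correct, and the final two-parameter induction you settle on
\[
\gamma_{i+1}^{\Lie}(\Lieg,\Lien)\subseteq{\ze}_{j}^{\Lie}(\Lieg)\iff\Lien\subseteq{\ze}_{i+j}^{\Lie}(\Lieg)
\]
works exactly as you describe: the base case $i=0$ is trivial, and the inductive step is one application of the adjunction $[\Liem,\Lieg]_{\Lie}\subseteq{\ze}_{j}^{\Lie}(\Lieg)\iff\Liem\subseteq{\ze}_{j+1}^{\Lie}(\Lieg)$, which follows immediately from the definitions of $[-,-]_{\Lie}$ and $C_{\Lieg}^{\Lie}(\Lieg,-)$.

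The paper's argument is the same idea unwound at the level of elements rather than ideals. For the forward direction it argues by contradiction: starting from $t\in\Lien\setminus{\ze}_c^{\Lie}(\Lieg)$ it manufactures a chain $t_k=[g_k,t_{k-1}]+[t_{k-1},g_k]\notin{\ze}_{c-k}^{\Lie}(\Lieg)$, so $t_c\ne 0$, yet $t_c\in\gamma_{c+1}^{\Lie}(\Lieg,\Lien)=0$. For the converse it takes a generator of $\gamma_{c+1}^{\Lie}(\Lieg,\Lien)$, peels off $c$ brackets to land in $\Lien\subseteq{\ze}_c^{\Lie}(\Lieg)$, and then climbs back down the upper series to ${\ze}_0^{\Lie}(\Lieg)=0$. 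Both proofs are really iterating the same commutator--centralizer relationship; yours packages it as a Galois-type adjunction and runs a clean induction on ideals, while the paper traces witnesses element by element. Your version has the advantage that it handles linear combinations in $\gamma_{c+1}^{\Lie}(\Lieg,\Lien)$ automatically (the paper's converse treats only single-bracket generators, which is harmless but strictly speaking incomplete), and the two-parameter formulation avoids the index-alignment worry you mention. The one thing to tidy for a final write-up is the self-correcting passage in the middle: jump straight to the sharper statement and its one-line inductive step.
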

\begin{proof}
Assume that the exact sequence of Leibniz algebras $0 \to \Lien \to \Lieg \stackrel{\pi} \to \Lieq \to 0$ is a  $c$-$\Lie$-central extension, then $\gamma_{c+1}^{\Lie}(\Lieg, \Lien) =0$. Let $t\in\Lien$ with $t\notin {\ze}_c^{\Lie}(\Lieg).$ Then $t_1:=[g_1,t]+[t,g_1]\notin {\ze}_{c-1}^{\Lie}(\Lieg)$ for some $g_1\in\Lieg.$ So $t_2:=[g_2,t_1]+[t_1,g_2]\notin {\ze}_{c-2}^{\Lie}(\Lieg)$ for some $g_2\in\Lieg.$ Inductively, we have $t_c:=[g_c,t_{c-1}]+[t_{c-1},g_c]\notin {\ze}_{0}^{\Lie}(\Lieg)=0$ for some $g_c\in\Lieg.$ So $t_c\neq0.$ On the other hand, we have  $t_c\in\gamma_{c+1}^{\Lie}(\Lieg, \Lien) =0 $ since $t\in\Lien=\gamma_{1}^{\Lie}(\Lieg, \Lien).$ A contradiction.

 Conversely, Assume that $\Lien \subseteq {\ze}_c^{\Lie}(\Lieg)$ and let $z\in\gamma_{c+1}^{\Lie}(\Lieg, \Lien).$ Then  $z:=[z_1,g_1]+[g_1,z_1]$ for some $z_1\in\gamma_{c}^{\Lie}(\Lieg, \Lien)$ and $g_1\in\Lieg.$ So  $z_1:=[z_2,g_2]+[g_2,z_2]$ for some $z_2\in\gamma_{c-1}^{\Lie}(\Lieg, \Lien)$ and $g_2\in\Lieg.$   Inductively we have $z_{c+1}:=[z_c,g_c]+[g_c,z_c]$ for some $z_c\in\gamma_{1}^{\Lie}(\Lieg, \Lien)$ and $g_c\in\Lieg.$ Since $\gamma_{1}^{\Lie}(\Lieg, \Lien)=\Lien \subseteq {\ze}_c^{\Lie}(\Lieg),$ it follows that $z_{c}\in {\ze}_c^{\Lie}(\Lieg).$ Inductively, we have $z\in {\ze}_0^{\Lie}(\Lieg)=0.$ Thus $z=0.$ Therefore $\gamma_{c+1}^{\Lie}(\Lieg, \Lien)=0.$
\end{proof}

\begin{Ex}\ \label{example exact sequence}
\begin{enumerate}

\item[(a)] The short exact sequence $0 \to {\cal M}_{\Lie}^{(c)} (\Lieq) \to \gamma_{c+1}^{\Lie^{ \ast}}({\Lieq }) \overset{u} \to \gamma_{c+1}^{\Lie}({\Lieq }) \to 0$, where $u(x+\gamma_{c+1}^{\Lie}({\Lief, \Lier })) = \rho(x)$, is a c-{\Lie}-central extension.

\item[(b)] From the $3 \times 3$ Lemma \cite[Theorem 4.2.7]{BB}, the free presentation $0 \to \Lier \to \Lief \stackrel{\rho}\to \Lieq \to 0$  induces the exact sequence $ 0 \to \frac{\Lier}{\gamma_{c+1}^{\Lie}(\Lief, \Lier)} \to  \frac{\Lief}{\gamma_{c+1}^{\Lie}(\Lief, \Lier)} \stackrel{\overline{\rho}}\to \Lieq \to 0$, where $\overline{\rho}$ is the natural epimorphism induced by $\rho$. Moreover this exact sequence is a $c$-$\Lie$-central extension.
\end{enumerate}
\end{Ex}

\begin{Le} \label{lema 1}
Let $0 \to \Lier \to \Lief \stackrel{\rho} \to \Lieq \to 0$ be a free presentation of a Leibniz algebra $\Lieq$ and let $0 \to \Liem \to \Lieh \stackrel{\theta} \to \Liep \to 0$ be a $c$-$\Lie$-central extension of another Leibniz algebra $\Liep$. Then for each homomorphism $\alpha : \Lieq \to \Liep$, there exists a homomorphism $\beta : \frac{\Lief}{\gamma_{c+1}^{\Lie}(\Lief, \Lier)} \to \Lieh$ such that $\beta \left( \frac{\Lier}{\gamma_{c+1}^{\Lie}(\Lief, \Lier)} \right) \subseteq \Liem$ and the following diagram is commutative:
\begin{equation} \label{diagram lemma}
\xymatrix{
0 \ar[r] & \frac{\Lier}{\gamma_{c+1}^{\Lie}(\Lief, \Lier)} \ar[r] \ar[d]^{\beta_{\mid}} & \frac{\Lief}{\gamma_{c+1}^{\Lie}(\Lief, \Lier)} \ar[r]^{\overline{\rho}} \ar[d]^{\beta} & \Lieq \ar[r] \ar[d]^{\alpha} & 0\\
0 \ar[r] & \Liem \ar[r] & \Lieh \ar[r]^{\theta} & \Liep \ar[r] & 0
}
\end{equation}
\end{Le}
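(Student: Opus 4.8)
The plan is to lift $\alpha$ along the free presentation first, then pass to the quotient. Since $\Lief$ is free in $\Leib$ and $\theta : \Lieh \twoheadrightarrow \Liep$ is surjective, the composite $\alpha \comp \rho : \Lief \to \Liep$ lifts to a homomorphism $\widehat{\beta} : \Lief \to \Lieh$ with $\theta \comp \widehat{\beta} = \alpha \comp \rho$. Because $\rho(\Lier) = 0$, we get $\theta(\widehat{\beta}(\Lier)) = \alpha(\rho(\Lier)) = 0$, so $\widehat{\beta}(\Lier) \subseteq \ker \theta = \Liem$. This is the easy part and needs only freeness plus the universal lifting property in a variety of $\Omega$-groups.

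The key step is to check that $\widehat{\beta}$ kills $\gamma_{c+1}^{\Lie}(\Lief, \Lier)$, so that it factors through the quotient $\Lief / \gamma_{c+1}^{\Lie}(\Lief, \Lier)$. By the functoriality of the lower $\Lie$-central series relative to an ideal, recorded in the excerpt (if $\varphi(\Liem) \subseteq \Lien$ then $\varphi(\gamma_i^{\Lie}(\Lieg, \Liem)) \subseteq \gamma_i^{\Lie}(\Lieq, \Lien)$), applied to $\widehat{\beta} : \Lief \to \Lieh$ with $\widehat{\beta}(\Lier) \subseteq \Liem$, we obtain $\widehat{\beta}(\gamma_{c+1}^{\Lie}(\Lief, \Lier)) \subseteq \gamma_{c+1}^{\Lie}(\Lieh, \Liem)$. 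But the hypothesis that $0 \to \Liem \to \Lieh \stackrel{\theta}\to \Liep \to 0$ is a $c$-$\Lie$-central extension means precisely $\gamma_{c+1}^{\Lie}(\Lieh, \Liem) = 0$. Hence $\widehat{\beta}(\gamma_{c+1}^{\Lie}(\Lief, \Lier)) = 0$, and $\widehat{\beta}$ induces a well-defined homomorphism
\[
\beta : \frac{\Lief}{\gamma_{c+1}^{\Lie}(\Lief, \Lier)} \to \Lieh, \qquad \beta\bigl(x + \gamma_{c+1}^{\Lie}(\Lief, \Lier)\bigr) = \widehat{\beta}(x).
\]

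It then remains to verify the diagram commutes. The right-hand square commutes because $\theta(\beta(x + \gamma_{c+1}^{\Lie}(\Lief, \Lier))) = \theta(\widehat{\beta}(x)) = \alpha(\rho(x)) = \alpha(\overline{\rho}(x + \gamma_{c+1}^{\Lie}(\Lief, \Lier)))$. For the left-hand square, restrict $\beta$ to $\Lier/\gamma_{c+1}^{\Lie}(\Lief, \Lier)$: since $\widehat{\beta}(\Lier) \subseteq \Liem$, the image lands in $\Liem$, giving the desired $\beta_{\mid}$, and compatibility with the two horizontal inclusions is immediate from the definitions. I expect the main (only mild) obstacle to be making sure the hypotheses of the functoriality statement are applied to the correct map with the correct ideal, i.e. that one really does have $\widehat{\beta}(\Lier) \subseteq \Liem$ before invoking it; everything else is a routine diagram chase. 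Note that implicitly we also use Example~\ref{example exact sequence}(b), which guarantees $\overline{\rho}$ exists and that the top row is itself a $c$-$\Lie$-central extension, though only the existence of $\overline{\rho}$ is needed here.
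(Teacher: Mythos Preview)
Your proof is correct and follows essentially the same route as the paper's own proof: lift $\alpha\comp\rho$ through $\theta$ using freeness of $\Lief$, observe that the lift sends $\Lier$ into $\Liem$, apply functoriality of the relative lower $\Lie$-central series together with the $c$-$\Lie$-centrality hypothesis to kill $\gamma_{c+1}^{\Lie}(\Lief,\Lier)$, and pass to the quotient. The only difference is notational (the paper calls the lift $\omega$) and that you spell out the functoriality step and the diagram check a bit more explicitly than the paper does.
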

\begin{proof}
Since $\Lief$ is a free Leibniz algebra, then there exists $\omega : \Lief \to \Lieh$ such that $\theta \circ \omega = \alpha \circ \rho$.

On the other hand, $\theta ( \omega(\Lier)) = \alpha(\rho(\Lier))=0$, hence $\omega(\Lier) \subseteq \Liem$, which implies that $\omega(\gamma_{c+1}^{\Lie}(\Lief, \Lier)) = 0$, hence $\omega$ induces $\beta : \frac{\Lief}{\gamma_{c+1}^{\Lie}(\Lief, \Lier)} \to \Lieh$ such that $\alpha \circ \overline{\rho} = \theta \circ \beta$, and for any $r \in \Lier$,  $\beta(r+\gamma_{c+1}^{\Lie}(\Lief, \Lier)) = \omega(r) \in \Liem$.
\end{proof}

\begin{Pro}
Let ${\Lieq}$ be a Leibniz algebra and $c \geq 1$. Then
\begin{enumerate}
\item[(a)] $\gamma_{c+1}^{\Lie^{ \ast}}({\Lieq})= 0$ if and only if {\Lieq} is $\Lie$-nilpotent of class $c$ and ${\cal M}_{\Lie}^{(c)} (\Lieq)=0$.
\item[(b)]  If $\gamma_{c+1}^{\Lie^{ \ast}}({\Lieq })=0$, then $\gamma_{c+1}^{\Lie^{ \ast}}({\Lieq}/{\Lien})= 0$ for any two-sided ideal ${\Lien}$ of ${\Lieq}$.
\end{enumerate}
\end{Pro}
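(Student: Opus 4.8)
The plan is to work throughout with a fixed free presentation $0 \to \Lier \to \Lief \stackrel{\rho}\to \Lieq \to 0$, since both $\gamma_{c+1}^{\Lie^{\ast}}$ and ${\cal M}_{\Lie}^{(c)}$ are Baer-invariants. For part (a), first I would unpack the defining formulas: $\gamma_{c+1}^{\Lie^{\ast}}(\Lieq) = \gamma_{c+1}^{\Lie}(\Lief)/\gamma_{c+1}^{\Lie}(\Lief,\Lier)$ vanishes exactly when $\gamma_{c+1}^{\Lie}(\Lief) \subseteq \gamma_{c+1}^{\Lie}(\Lief,\Lier)$. Now $\gamma_{c+1}^{\Lie}(\Lief,\Lier)$ is built from $\Lier$ by $c$ successive $\Lie$-commutators with $\Lief$, so it sits inside $\Lier$ (each step keeps us in the ideal $\Lier$) and simultaneously inside $\gamma_{c+1}^{\Lie}(\Lief)$ (replacing $\Lier = \gamma_1^{\Lie}(\Lief,\Lier)$ by $\Lief = \gamma_1^{\Lie}(\Lief)$ only enlarges). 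Hence $\gamma_{c+1}^{\Lie}(\Lief,\Lier) \subseteq \Lier \cap \gamma_{c+1}^{\Lie}(\Lief)$ always. So the condition $\gamma_{c+1}^{\Lie}(\Lief) \subseteq \gamma_{c+1}^{\Lie}(\Lief,\Lier)$ forces both $\gamma_{c+1}^{\Lie}(\Lief) \subseteq \Lier$ — equivalently, applying $\rho$, $\gamma_{c+1}^{\Lie}(\Lieq) = \rho(\gamma_{c+1}^{\Lie}(\Lief)) = 0$, i.e. $\Lieq$ is $\Lie$-nilpotent of class $\le c$ — and $\Lier \cap \gamma_{c+1}^{\Lie}(\Lief) = \gamma_{c+1}^{\Lie}(\Lief,\Lier)$, i.e. ${\cal M}_{\Lie}^{(c)}(\Lieq) = 0$. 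Conversely, if $\gamma_{c+1}^{\Lie}(\Lieq) = 0$ then $\gamma_{c+1}^{\Lie}(\Lief) \subseteq \ker\rho = \Lier$, so $\gamma_{c+1}^{\Lie}(\Lief) = \Lier \cap \gamma_{c+1}^{\Lie}(\Lief)$, which equals $\gamma_{c+1}^{\Lie}(\Lief,\Lier)$ precisely when ${\cal M}_{\Lie}^{(c)}(\Lieq) = 0$; hence $\gamma_{c+1}^{\Lie^{\ast}}(\Lieq) = 0$.

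The one point requiring care — and the main obstacle — is the identity $\rho(\gamma_{c+1}^{\Lie}(\Lief)) = \gamma_{c+1}^{\Lie}(\Lieq)$ and, more subtly, the claim that $\Lie$-nilpotency of class exactly $c$ versus class $\le c$ is what is meant; I would phrase part (a) as "$\Lie$-nilpotent of class $\le c$" (or note that the statement tacitly allows this) since the formula cannot distinguish class $c$ from smaller classes. That $\rho$ carries $\gamma_{c+1}^{\Lie}$ of $\Lief$ onto $\gamma_{c+1}^{\Lie}$ of $\Lieq$ follows from the functoriality remark after Definition (the homomorphism property $\varphi(\gamma_i^{\Lie}(\Lieg,\Liem)) \subseteq \gamma_i^{\Lie}(\Lieq,\Lien)$, applied with $\Liem = \Lief$, $\Lien = \Lieq$, $\varphi = \rho$ surjective) together with the fact that $\rho$ being surjective makes the inclusion an equality. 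I would also record that $\gamma_{c+1}^{\Lie}(\Lief,\Lier) \subseteq \Lier$ since $\gamma_1^{\Lie}(\Lief,\Lier) = \Lier$ is an ideal and each $\gamma_i^{\Lie}(\Lief,\Lier) = [\gamma_{i-1}^{\Lie}(\Lief,\Lier),\Lief]_{\Lie} \subseteq \gamma_{i-1}^{\Lie}(\Lief,\Lier)$, a decreasing chain inside $\Lier$.

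For part (b), assume $\gamma_{c+1}^{\Lie^{\ast}}(\Lieq) = 0$, so by part (a) $\Lieq$ is $\Lie$-nilpotent of class $\le c$. For a two-sided ideal $\Lien$ of $\Lieq$, the quotient $\Lieq/\Lien$ is a homomorphic image of $\Lieq$, and $\gamma_{c+1}^{\Lie}(\Lieq/\Lien)$ is the image of $\gamma_{c+1}^{\Lie}(\Lieq) = 0$ under the projection $\Lieq \twoheadrightarrow \Lieq/\Lien$, again by the functoriality of the lower $\Lie$-central series; hence $\Lieq/\Lien$ is $\Lie$-nilpotent of class $\le c$. It remains to see ${\cal M}_{\Lie}^{(c)}(\Lieq/\Lien) = 0$ and then invoke part (a) in the reverse direction to conclude $\gamma_{c+1}^{\Lie^{\ast}}(\Lieq/\Lien) = 0$. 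To get the vanishing of the multiplier I would take a free presentation $0 \to \Lies \to \Lief \stackrel{\sigma}\to \Lieq/\Lien \to 0$ in which $\Lief$ is the same free algebra presenting $\Lieq$ (possible since $\Lieq/\Lien$ is generated by the images of the generators of $\Lieq$), with $\Lier \subseteq \Lies$ and $\sigma$ factoring through $\rho$; then $\gamma_{c+1}^{\Lie}(\Lief) \subseteq \Lier \subseteq \Lies$ from class $\le c$ of $\Lieq$ already, so $\Lies \cap \gamma_{c+1}^{\Lie}(\Lief) = \gamma_{c+1}^{\Lie}(\Lief)$, and since ${\cal M}_{\Lie}^{(c)}(\Lieq) = 0$ gives $\gamma_{c+1}^{\Lie}(\Lief) = \gamma_{c+1}^{\Lie}(\Lief,\Lier) \subseteq \gamma_{c+1}^{\Lie}(\Lief,\Lies)$, we get $\Lies \cap \gamma_{c+1}^{\Lie}(\Lief) = \gamma_{c+1}^{\Lie}(\Lief,\Lies)$, i.e. ${\cal M}_{\Lie}^{(c)}(\Lieq/\Lien) = 0$. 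The delicate bookkeeping here is the choice of compatible free presentations and the inclusion $\Lier \subseteq \Lies$; once that is set up the rest is the chain of inclusions just described.
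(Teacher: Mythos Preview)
Your proof is correct and, for part \textit{(a)}, essentially identical to the paper's: both directions amount to juggling the inclusions $\gamma_{c+1}^{\Lie}(\Lief,\Lier) \subseteq \Lier \cap \gamma_{c+1}^{\Lie}(\Lief)$ and $\gamma_{c+1}^{\Lie}(\Lief) \subseteq \Lier$ exactly as you describe. Your remark about ``class $\le c$'' versus ``class exactly $c$'' is well taken.

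For part \textit{(b)} your argument is correct but takes a detour that the paper avoids. You pass through part \textit{(a)} twice: first decompose $\gamma_{c+1}^{\Lie^{\ast}}(\Lieq)=0$ into nilpotency plus ${\cal M}_{\Lie}^{(c)}(\Lieq)=0$, then establish each of these for $\Lieq/\Lien$, then reassemble via \textit{(a)}. The paper instead works directly with the definition: taking $\Lies = \ker(\pi\circ\rho) \supseteq \Lier$ as you do, the hypothesis $\gamma_{c+1}^{\Lie^{\ast}}(\Lieq)=0$ says $\gamma_{c+1}^{\Lie}(\Lief) \subseteq \gamma_{c+1}^{\Lie}(\Lief,\Lier)$, and since $\Lier \subseteq \Lies$ gives $\gamma_{c+1}^{\Lie}(\Lief,\Lier) \subseteq \gamma_{c+1}^{\Lie}(\Lief,\Lies)$, one immediately has $\gamma_{c+1}^{\Lie}(\Lief) \subseteq \gamma_{c+1}^{\Lie}(\Lief,\Lies)$, i.e.\ $\gamma_{c+1}^{\Lie^{\ast}}(\Lieq/\Lien)=0$. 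This is a single chain of inclusions rather than the ``delicate bookkeeping'' you anticipate; the same compatible free presentation you chose already suffices without unpacking into the two conditions of \textit{(a)}.
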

\begin{proof} {\it (a)} $\gamma_{c+1}^{\Lie^{ \ast}}({\Lieq })= 0$ implies that $\gamma^{\Lie}_{c+1}({\Lief}) \subseteq \gamma^{\Lie}_{c+1}({\Lief}, {\Lier})$, thus   $\gamma^{\Lie}_{c+1}({\Lieq})=[\gamma^{\Lie}_{c}({\Lieq}), {\Lieq}]_{\Lie} = \gamma^{\Lie}_{c+1}({\Lief})/{\Lier} \subseteq \gamma^{\Lie}_{c+1}({\Lief},{\Lier})/{\Lier} \subseteq 0$.

Moreover ${\cal M}_{\Lie}^{(c)} (\Lieq)= \frac{{\Lier} \cap  \gamma^{\Lie}_{c+1}({\Lief})}{\gamma^{\Lie}_{c+1}({\Lief},{\Lier})} \subseteq \frac{{\Lier} \cap  \gamma^{\Lie}_{c+1}({\Lief},{\Lier})}{\gamma^{\Lie}_{c+1}({\Lief},{\Lier})}=0$.

Conversely, if ${\cal M}_{\Lie}^{(c)} (\Lieq) = 0$, then ${\Lier} \cap \gamma^{\Lie}_{c+1}({\Lief}) \subseteq \gamma^{\Lie}_{c+1}({\Lief},{\Lier})$. Since $\gamma^{\Lie}_{c+1}({\Lieq})  =0$ implies that $\gamma^{\Lie}_{c+1} ({\Lief})/{\Lier} =0$, then $\gamma^{\Lie}_{c+1}({\Lief})  \subseteq {\Lier}$. Hence $\gamma^{\Lie}_{c+1}({\Lief})  \subseteq \gamma^{\Lie}_{c+1}({\Lief},{\Lier})$ and, consequently,  $\gamma_{c+1}^{\Lie^{ \ast}}({\Lieq }) = 0$.

{\it (b)} Let ${\Lien}$ be a two-sided ideal of ${\Lieq}$ and consider the free presentation $0 \to {\Lies} = \ker(\pi \circ \rho) \to {\Lief} \stackrel{\tau \circ \rho} \to {\Lieq}/{\Lien} \to 0$, where $\tau : {\Lieq} \twoheadrightarrow{\Lieq}/{\Lien}$ is the canonical projection. Since ${\Lier} \subseteq {\Lies}$, then
$\gamma_{c+1}^{\Lie^{ \ast}}({\Lieq})= 0$ implies that $\gamma^{\Lie}_{c+1}({\Lief}) \subseteq \gamma^{\Lie}_{c+1}({\Lief}, {\Lier}) \subseteq \gamma^{\Lie}_{c+1}({\Lief}, {\Lies})$ which completes the proof.
\end{proof}

\begin{De}
Let $\Lieq$ be a \Lie-nilpotent Leibniz algebra of class $c$. An extension of Leibniz algebras $0 \to \Lien \to \Lieg \stackrel{\pi}\to \Lieq \to 0$ is said to be of class $c$ if $\Lieg$ is nilpotent of class $c$.
\end{De}

\begin{Th}
A $c$-$\Lie$-central extension $0 \to \Lien \to \Lieg \stackrel{\pi}\to \Lieq \to 0$ of a class $c$ nilpotent Leibniz algebra $\Lieq$   is of class $c$ if and only if $\theta : {\cal M}_{\Lie}^{(c)} \left({\Lieg} \right) \to \Lien$  vanishes over $\ker(\tau)$, where $\tau : {\cal M}_{\Lie}^{(c)} \left( {\Lieq} \right)\to{\cal M}_{\Lie}^{(c)} \left( \Lieq/\gamma_c^{\Lie}(\Lieq) \right)$ is induced by the canonical projection $\Lieq \twoheadrightarrow  \Lieq/\gamma_c^{\Lie}(\Lieq)$.
\end{Th}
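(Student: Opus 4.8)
The plan is to transfer the whole statement to a free presentation of $\Lieq$, reduce the condition ``the extension is of class $c$'' to the vanishing of $\gamma_{c+1}^{\Lie}(\Lieg)$, realise $\gamma_{c+1}^{\Lie}(\Lieg)$ as the image of a comparison map supplied by Lemma~\ref{lema 1}, and finally recast everything in terms of $\theta$ and $\ker(\tau)$ by controlling how $\gamma_c^{\Lie}(\Lieq)$ meets the relation ideal.

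\smallskip
First I would fix a free presentation $0\to\Lier_0\to\Lief\stackrel{\sigma}{\to}\Lieg\to 0$ and set $\Lier:=\sigma^{-1}(\Lien)$, so that $\Lier_0\subseteq\Lier$, $\Lier/\Lier_0\cong\Lien$, and $0\to\Lier\to\Lief\stackrel{\pi\sigma}{\to}\Lieq\to 0$ is a free presentation of $\Lieq$. Three preliminary observations are recorded. (i) Since $\Lieq$ is $\Lie$-nilpotent of class $c$, $\gamma_{c+1}^{\Lie}(\Lief)\subseteq\Lier$, and hence --- arguing exactly as in the proof of the earlier characterization of $\gamma_{c+1}^{\Lie^{\ast}}(\Lieq)=0$ --- one has $\gamma_{c+1}^{\Lie^{\ast}}(\Lieq)={\cal M}_{\Lie}^{(c)}(\Lieq)$. (ii) Since the extension is $c$-$\Lie$-central, Proposition~\ref{inclusion} gives $\Lien\subseteq{\ze}_c^{\Lie}(\Lieg)$, equivalently $\gamma_{c+1}^{\Lie}(\Lief,\Lier)\subseteq\Lier_0$. (iii) From $\gamma_{c+1}^{\Lie}(\Lieg)\subseteq\Lien\subseteq{\ze}_c^{\Lie}(\Lieg)$ one gets $\gamma_{2c+1}^{\Lie}(\Lieg)=0$, so $\Lieg$ is automatically $\Lie$-nilpotent; as $\Lieg\twoheadrightarrow\Lieq$ with $\gamma_c^{\Lie}(\Lieq)\neq 0$, the extension is of class $c$ \emph{if and only if} $\gamma_{c+1}^{\Lie}(\Lieg)=0$, i.e. $\gamma_{c+1}^{\Lie}(\Lief)\subseteq\Lier_0$.

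\smallskip
Next I would produce the comparison map. Applying Lemma~\ref{lema 1} with $\alpha=\mathrm{id}_{\Lieq}$ and with the given $c$-$\Lie$-central extension in the role of $0\to\Liem\to\Lieh\to\Liep\to 0$ yields $\beta\colon\Lief/\gamma_{c+1}^{\Lie}(\Lief,\Lier)\to\Lieg$ lifting $\mathrm{id}_{\Lieq}$ and sending $\Lier/\gamma_{c+1}^{\Lie}(\Lief,\Lier)$ into $\Lien$; one may take $\beta$ to be the map induced by $\sigma$. Since $\pi\beta$ is onto and $\Lien\subseteq{\ze}_c^{\Lie}(\Lieg)$, the Leibniz identity (which lets one push a factor from ${\ze}_c^{\Lie}(\Lieg)$ into the innermost slot of a $\Lie$-bracket of length $c+1$, where it is annihilated) gives $\gamma_{c+1}^{\Lie}(\Lieg)=\beta\bigl(\gamma_{c+1}^{\Lie}\!\bigl(\Lief/\gamma_{c+1}^{\Lie}(\Lief,\Lier)\bigr)\bigr)$, and by (i) the subalgebra to which $\beta$ is applied is exactly ${\cal M}_{\Lie}^{(c)}(\Lieq)$. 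Functoriality of ${\cal M}_{\Lie}^{(c)}(-)$ and of the short exact sequence of Example~\ref{example exact sequence}(a), applied to $\pi$ and to $\Lieq\twoheadrightarrow\Lieq/\gamma_c^{\Lie}(\Lieq)$, then supplies $\theta$ and $\tau$ together with the commutative square binding them, and identifies $\mathrm{Im}(\theta)$ with $\gamma_{c+1}^{\Lie}(\Lieg)$; by (iii) the extension is of class $c$ precisely when $\theta$ is trivial.

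\smallskip
The remaining --- and main --- difficulty is to check that the triviality of $\theta$ is already detected on $\ker(\tau)$, using the commutative square of the previous step. Writing $\widetilde{\Lier}:=\gamma_c^{\Lie}(\Lief)+\Lier$ (the inverse image of $\gamma_c^{\Lie}(\Lieq)$ in $\Lief$) and using $\gamma_{c+1}^{\Lie}(\Lief,\gamma_c^{\Lie}(\Lief))=\gamma_{2c}^{\Lie}(\Lief)$ together with the additivity of $\gamma_{c+1}^{\Lie}(\Lief,-)$, one computes
\[
\ker(\tau)\;=\;\frac{\gamma_{c+1}^{\Lie}(\Lief,\widetilde{\Lier})}{\gamma_{c+1}^{\Lie}(\Lief,\Lier)}\;=\;\frac{\gamma_{2c}^{\Lie}(\Lief)+\gamma_{c+1}^{\Lie}(\Lief,\Lier)}{\gamma_{c+1}^{\Lie}(\Lief,\Lier)}\ \subseteq\ \frac{\gamma_{c+1}^{\Lie}(\Lief)}{\gamma_{c+1}^{\Lie}(\Lief,\Lier)}={\cal M}_{\Lie}^{(c)}(\Lieq).
\]
Matching this against the description of $\theta$ (which sends a class $x+\gamma_{c+1}^{\Lie}(\Lief,\Lier)$, $x\in\gamma_{c+1}^{\Lie}(\Lief)$, to $\sigma(x)\in\Lien$), the statement is reduced to the ideal identity
\[
\gamma_{c+1}^{\Lie}(\Lief)\;=\;\gamma_{2c}^{\Lie}(\Lief)+\bigl(\gamma_{c+1}^{\Lie}(\Lief)\cap\Lier_0\bigr):
\]
applying $\sigma$ and using $\sigma(\Lier_0)=0$, the right-hand side collapses to $\gamma_{2c}^{\Lie}(\Lieg)$, so the identity reads $\gamma_{c+1}^{\Lie}(\Lieg)=\gamma_{2c}^{\Lie}(\Lieg)$, which forces $\gamma_{c+1}^{\Lie}(\Lieg)=0$ by $\Lie$-nilpotency of $\Lieg$; conversely, if the extension is of class $c$ then $\gamma_{c+1}^{\Lie}(\Lief)\subseteq\Lier_0$ by (iii) and the identity is clear. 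This identity is the crux: it is exactly where one must use simultaneously that $\Lieq$ has class \emph{exactly} $c$ (so that $\gamma_c^{\Lie}(\Lieq)$ is the genuine last term of the $\Lie$-lower central series, validating the description of $\ker(\tau)$) and that the extension is $c$-$\Lie$-central (which places $\gamma_{c+1}^{\Lie}(\Lief,\Lier)$ inside $\Lier_0$), and where one verifies that factoring out $\gamma_c^{\Lie}(\Lieq)$ does not discard the portion of the $c$-nilpotent Schur $\Lie$-multiplier that governs the $\Lie$-nilpotency class of $\Lieg$ --- precisely the portion recorded by $\ker(\tau)$.
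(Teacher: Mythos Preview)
Your setup is essentially the paper's: the paper fixes a free presentation $0\to\Lier\to\Lief\stackrel{\rho}{\to}\Lieg\to 0$, sets $\Lies=\ker(\pi\rho)$ and $\Liet=\ker(pr\circ\pi\rho)$, and identifies $\theta(x+\gamma_{c+1}^{\Lie}(\Lief,\Lies))=\rho(x)$ and $\ker(\tau)=\gamma_{c+1}^{\Lie}(\Lief,\Liet)/\gamma_{c+1}^{\Lie}(\Lief,\Lies)$. Your $(\Lier_0,\Lier,\widetilde{\Lier})$ are exactly their $(\Lier,\Lies,\Liet)$, and your additivity calculation $\gamma_{c+1}^{\Lie}(\Lief,\widetilde{\Lier})=\gamma_{2c}^{\Lie}(\Lief)+\gamma_{c+1}^{\Lie}(\Lief,\Lier)$ just makes explicit what the paper uses implicitly. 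So the route is the same.

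There is, however, a genuine gap at your ``crux''. Unwinding your own description of $\theta$ and $\ker(\tau)$, the hypothesis ``$\theta$ vanishes on $\ker(\tau)$'' says precisely that $\sigma\bigl(\gamma_{2c}^{\Lie}(\Lief)+\gamma_{c+1}^{\Lie}(\Lief,\Lier)\bigr)=0$, i.e.\ $\gamma_{2c}^{\Lie}(\Lieg)=0$; whereas ``the extension is of class $c$'' says $\gamma_{c+1}^{\Lie}(\Lieg)=0$. Your ``ideal identity'' $\gamma_{c+1}^{\Lie}(\Lief)=\gamma_{2c}^{\Lie}(\Lief)+\bigl(\gamma_{c+1}^{\Lie}(\Lief)\cap\Lier_0\bigr)$ is not deduced from the hypothesis; you only observe that \emph{if} it held, its $\sigma$-image $\gamma_{c+1}^{\Lie}(\Lieg)=\gamma_{2c}^{\Lie}(\Lieg)$ would force $\gamma_{c+1}^{\Lie}(\Lieg)=0$ by nilpotency, and that conversely it holds when the extension already has class $c$. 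That is circular: the identity is equivalent to the conclusion, not to the hypothesis. Concretely, the implication $\gamma_{2c}^{\Lie}(\Lieg)=0\Rightarrow\gamma_{c+1}^{\Lie}(\Lieg)=0$ is what is missing, and it does not follow from $\Lien\subseteq{\ze}_c^{\Lie}(\Lieg)$ together with $\gamma_{c+1}^{\Lie}(\Lieg)\subseteq\Lien$: take any $\Lieg$ of $\Lie$-class strictly between $c$ and $2c$ (e.g.\ for $c=2$ the algebra of Example~\ref{example Lie stem}\,(a), of $\Lie$-class $3$) and set $\Lien=\gamma_{c+1}^{\Lie}(\Lieg)$; then the extension is $c$-$\Lie$-central, $\Lieq=\Lieg/\Lien$ has class exactly $c$, $\gamma_{2c}^{\Lie}(\Lieg)=0$, yet $\gamma_{c+1}^{\Lie}(\Lieg)\neq 0$.

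For comparison, the paper's converse argument asserts $[\rho(\gamma_c^{\Lie}(\Lief)),\rho(\Lief)]_{\Lie}\subseteq\rho(\gamma_{c+1}^{\Lie}(\Lief,\Liet))$ on the grounds that $\gamma_c^{\Lie}(\Lief)\subseteq\Liet$; but that inclusion only yields $[\gamma_c^{\Lie}(\Lief),\Lief]_{\Lie}\subseteq\gamma_2^{\Lie}(\Lief,\Liet)$, not $\gamma_{c+1}^{\Lie}(\Lief,\Liet)$. So the paper's proof and yours stumble at the same point; your write-up simply makes the difficulty more visible by isolating it as the ``ideal identity''.
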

\begin{proof}
Consider the following diagrams:
\[
\xymatrix{& & 0 \ar[d] & 0 \ar[ld]  &   &      & & 0 \ar[d] & 0 \ar[ld]  &     \\
&  & {\Lier} \ar[d]\ar[ld] & &              &  &  & {\Lies} \ar[d]\ar[ld] & &  \\
0 \ar[r] & {\Lies} \ \ar[r] \ar[d] & {\Lief} \ar[d]^{\rho} \ar[rd]^{\pi \circ \rho} & &    &  0 \ar[r] & {\Liet} \ \ar[r] \ar[d] & {\Lief} \ar[d]^{\pi \circ \rho} \ar[rd]^{pr \circ \pi \circ \rho} & &   \\
0 \ar[r]& {\Lien} \ar[r] \ar[d]& {\Lieg} \ar[r]^{\pi} \ar[d]& {\Lieq} \ar[r] \ar[dr] & 0   &  0 \ar[r]& \gamma_c^{\Lie}(\Lieq) \ar[r] \ar[d]& {\Lieq} \ar[r]^{pr\ \ \ \ } \ar[d]& {\Lieq}/\gamma_c^{\Lie}(\Lieq) \ar[r] \ar[dr] & 0 \\
 & 0 & 0 &  & 0 & & 0 & 0 &  & 0
}
\]
where $\Lies = \ker(\pi \circ \rho)$ and $\Liet = \ker(pr \circ \pi \circ \rho)$, then $\theta : {\cal M}_{\Lie}^{(c)} \left( \Lieq \right) = \frac{\Lies \cap \gamma_{c+1}^{\Lie}(\Lief)}{\gamma_{c+1}^{\Lie}(\Lief, \Lies)} \to \Lien$, given by $\theta(x+\gamma_{c+1}^{\Lie}(\Lief, \Lies))= \rho(x)$, is well-defined and ${\ker}(\tau) \cong \frac{\gamma_{c+1}^{\Lie}(\Lief, \Liet)}{\gamma_{c+1}^{\Lie}(\Lief, \Lies)}$.

Assume that {\Lieg} is \Lie-nilpotent of class $c$ and consider $x+\gamma_{c+1}^{\Lie}(\Lief, \Lies) \in {\ker}(\tau)$. Then $\theta(x+ \gamma_{c+1}^{\Lie}(\Lief, \Lies)) = \rho(x) = 0$ since $\rho(x) \in \rho(\gamma_{c+1}^{\Lie}(\Lief, \Liet)) \subseteq \gamma_{c+1}^{\Lie}(\Lieg) +\gamma_{c+1}^{\Lie}(\Lieg, \Lien) = 0$. For the last inclusion, it is necessary to have in mind that $\pi \left(\rho({\Liet}) \right) \subseteq \gamma_{c}^{\Lie}(\Lieq) = \pi(\gamma_{c}^{\Lie}(\Lieg))$, and consequently $\rho({\Liet}) \subseteq \gamma_{c}^{\Lie}(\Lieg) +{\Lien}$.

Conversely, $\gamma_{c+1}^{\Lie}(\Lieg) = [\gamma_{c}^{\Lie}(\Lieg), {\Lieg}]_{\Lie} = [\rho(\gamma_{c}^{\Lie}(\Lief)), \rho({\Lief})]_{\Lie} \subseteq \rho (\gamma_{c+1}^{\Lie}(\Lief, \Liet))  =0$  since $\gamma_{c+1}^{\Lie}(\Lief, \Liet) \subseteq {\Lier}$ because $\theta$ vanishes over ${\ker}(\tau)$. For the last inclusion is necessary to have in mind that $\pi \left( \rho(\gamma_{c}^{\Lie}(\Lief)) \right) \subseteq \gamma_{c}^{\Lie}(\Lieq)$, hence $\gamma_{c}^{\Lie}(\Lief) \subseteq {\Liet}$.
\end{proof}

\begin{Pro}
Let $\Lieg$ be a $\Lie$-nilpotent Leibniz algebra of class $c$ and $f : \Lieg \twoheadrightarrow \Lieq$ be a surjective homomorphism of Leibniz algebras. If $\ker(f) \subseteq \gamma_c^{\Lie}(\Lieg)$ and ${\cal M}_{\Lie}^{(c)}(\Lieq) =0$, then $f$ is an isomorphism. In particular, if ${\cal M}_{\Lie}^{(c)}(\Lieg/\gamma_c^{\Lie}(\Lieg)) =0$, then ${\cal M}_{\Lie}^{(c)}(\Lieg) =0$.
\end{Pro}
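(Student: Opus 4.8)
The plan is to pull everything back to a free presentation and to reformulate both assertions in terms of the lower $\Lie$-central series of a free Leibniz algebra. First I would fix a free presentation $0 \to \Lier \to \Lief \stackrel{\rho}\to \Lieg \to 0$ and put $\Lies = \rho^{-1}(\ker f)$, so that $0 \to \Lies \to \Lief \to \Lieq \to 0$ is a free presentation of $\Lieq$ with $\Lier \subseteq \Lies$ and $\ker f \cong \Lies/\Lier$. Since $\ker f \subseteq \gamma_c^{\Lie}(\Lieg) = \rho\bigl(\gamma_c^{\Lie}(\Lief)\bigr)$ this yields the key inclusion $\Lies \subseteq \Lier + \gamma_c^{\Lie}(\Lief)$, and since $\gamma_{c+1}^{\Lie}(\Lieg) = 0$ we also get $\gamma_{c+1}^{\Lie}(\Lief) \subseteq \Lier \subseteq \Lies$; hence the hypothesis ${\cal M}_{\Lie}^{(c)}(\Lieq) = 0$ reads exactly $\gamma_{c+1}^{\Lie}(\Lief) = \gamma_{c+1}^{\Lie}(\Lief,\Lies)$.

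The second ingredient is that $X \mapsto [X,\Lief]_{\Lie}$, hence $\gamma_n^{\Lie}(\Lief,-)$, is additive on sums of two-sided ideals, so $\gamma_n^{\Lie}(\Lief,\Liea+\Lieb) = \gamma_n^{\Lie}(\Lief,\Liea) + \gamma_n^{\Lie}(\Lief,\Lieb)$, while $\gamma_n^{\Lie}(\Lief,\gamma_c^{\Lie}(\Lief)) = \gamma_{c+n-1}^{\Lie}(\Lief)$. Combining these with $\Lies \subseteq \Lier + \gamma_c^{\Lie}(\Lief)$ turns the previous identity into $\gamma_{c+1}^{\Lie}(\Lief) \subseteq \gamma_{c+1}^{\Lie}(\Lief,\Lier) + \gamma_{2c}^{\Lie}(\Lief)$; writing $\widetilde\Lief = \Lief/\gamma_{c+1}^{\Lie}(\Lief,\Lier)$ and $\widetilde\Lier$ for the image of $\Lier$, this says $\gamma_{c+1}^{\Lie}(\widetilde\Lief) = \gamma_{2c}^{\Lie}(\widetilde\Lief)$. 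The extension $0 \to \widetilde\Lier \to \widetilde\Lief \to \Lieg \to 0$ is $c$-$\Lie$-central, so $\widetilde\Lier \subseteq {\ze}_c^{\Lie}(\widetilde\Lief)$ by Proposition \ref{inclusion}; as $\gamma_{c+1}^{\Lie}(\widetilde\Lief) \subseteq \widetilde\Lier \subseteq {\ze}_c^{\Lie}(\widetilde\Lief)$, applying Proposition \ref{inclusion} to $0 \to {\ze}_c^{\Lie}(\widetilde\Lief) \to \widetilde\Lief \to \widetilde\Lief/{\ze}_c^{\Lie}(\widetilde\Lief) \to 0$ gives $\gamma_{2c+1}^{\Lie}(\widetilde\Lief) = \gamma_{c+1}^{\Lie}\bigl(\widetilde\Lief,\gamma_{c+1}^{\Lie}(\widetilde\Lief)\bigr) = 0$. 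Bracketing $\gamma_{c+1}^{\Lie}(\widetilde\Lief) = \gamma_{2c}^{\Lie}(\widetilde\Lief)$ once more with $\widetilde\Lief$ gives $\gamma_{c+2}^{\Lie}(\widetilde\Lief) = \gamma_{2c+1}^{\Lie}(\widetilde\Lief) = 0$, whence (for $c \geq 2$, where $2c \geq c+2$) $\gamma_{c+1}^{\Lie}(\widetilde\Lief) = \gamma_{2c}^{\Lie}(\widetilde\Lief) = 0$. Thus $\gamma_{c+1}^{\Lie}(\Lief) \subseteq \gamma_{c+1}^{\Lie}(\Lief,\Lier)$, i.e. ${\cal M}_{\Lie}^{(c)}(\Lieg) = 0$; specialising to $\Lieq = \Lieg/\gamma_c^{\Lie}(\Lieg)$ with $f$ the quotient map (whose kernel $\gamma_c^{\Lie}(\Lieg)$ certainly lies in $\gamma_c^{\Lie}(\Lieg)$), this sub-conclusion is precisely the ``in particular'' assertion.

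For the isomorphism it then remains to deduce $\ker f = 0$. The step just carried out also gives $\gamma_{c+1}^{\Lie}(\Lief,\Lier) = \gamma_{c+1}^{\Lie}(\Lief) = \gamma_{c+1}^{\Lie}(\Lief,\Lies)$, so the two associated $c$-$\Lie$-central covers coincide: $\widetilde\Lief = \Lief/\gamma_{c+1}^{\Lie}(\Lief,\Lier)$ is $\Lie$-nilpotent of class $c$, $\widetilde\Lief/\widetilde\Lier \cong \Lieg$, $\widetilde\Lief/\widetilde\Lies \cong \Lieq$, $\widetilde\Lier \subseteq \widetilde\Lies$ and $\widetilde\Lies/\widetilde\Lier \cong \ker f$; moreover $\ker f \subseteq \gamma_c^{\Lie}(\Lieg)$ forces $\widetilde\Lies = \widetilde\Lier + \bigl(\widetilde\Lies \cap \gamma_c^{\Lie}(\widetilde\Lief)\bigr)$, so everything reduces to proving $\widetilde\Lies \cap \gamma_c^{\Lie}(\widetilde\Lief) \subseteq \widetilde\Lier$.

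This last inclusion is where I expect the real difficulty to lie, because once $\gamma_{c+1}^{\Lie}(\widetilde\Lief) = 0$ the condition ${\cal M}_{\Lie}^{(c)}(\Lieq) = 0$ is vacuous and its content must be extracted one bracket earlier. One route is to establish, from the free presentations by a diagram chase, a Ganea-type exact sequence ${\cal M}_{\Lie}^{(c)}(\Lieg) \to {\cal M}_{\Lie}^{(c)}(\Lieq) \to \ker f \to \gamma_{c+1}^{\Lie^{\ast}}(\Lieg) \to \gamma_{c+1}^{\Lie^{\ast}}(\Lieq) \to 0$, which is exact with third term equal to $\ker f$ itself precisely because $\ker f$ lies in $\gamma_c^{\Lie}(\Lieg)$; feeding in ${\cal M}_{\Lie}^{(c)}(\Lieq) = 0$ and $\gamma_{c+1}^{\Lie^{\ast}}(\Lieg) = {\cal M}_{\Lie}^{(c)}(\Lieg) = 0$ then collapses it to $\ker f = 0$. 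An alternative is a ``freeness'' argument inside $\Lief$ showing directly that an element $w \in \gamma_c^{\Lie}(\Lief) \cap \Lies$ must already lie in $\Lier$. Besides settling this, one should also treat the low class $c = 1$ (where $\Lieg$ is simply a nonzero Lie algebra) separately, since the bracket bookkeeping above degenerates there.
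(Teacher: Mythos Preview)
Your route is quite different from the paper's.  The paper gives essentially a three-line argument: apply the exact sequence of Proposition~\ref{exact sequences}\,{\it(a)} to $0 \to \Lien \to \Lieg \to \Lieq \to 0$ with $\Lien = \ker f$; vanishing of ${\cal M}_{\Lie}^{(c)}(\Lieq)$ forces the cokernel $\tfrac{\Lien \cap \gamma_{c+1}^{\Lie}(\Lieg)}{\gamma_{c+1}^{\Lie}(\Lieg,\Lien)}$ to be zero, whence (the paper asserts) $\Lien \subseteq \gamma_{c+1}^{\Lie}(\Lieg,\Lien) \subseteq \gamma_{c+1}^{\Lie}(\Lieg) = 0$.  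The ``in particular'' clause is then just a specialisation, not established separately as you do.  Your free-presentation analysis, by contrast, first proves ${\cal M}_{\Lie}^{(c)}(\Lieg)=0$ via the identity $\gamma_{c+1}^{\Lie}(\widetilde\Lief)=\gamma_{2c}^{\Lie}(\widetilde\Lief)$ and a nilpotency bootstrap; that part of your argument is correct and self-contained (for $c\geq 2$).

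The genuine gap in your proposal is precisely where you locate it: the final step $\ker f = 0$.  The Ganea-type sequence you write down does not exist in the form stated.  The natural five-term sequence from Proposition~\ref{exact sequences}\,{\it(b)} has $\tfrac{\Lien}{\gamma_{c+1}^{\Lie}(\Lieg,\Lien)} \to \tfrac{\Lieg}{\gamma_{c+1}^{\Lie}(\Lieg)}$ in the relevant slot, not $\ker f \to \gamma_{c+1}^{\Lie^{\ast}}(\Lieg)$; there is no natural map $\Lies/\Lier \to \gamma_{c+1}^{\Lie}(\Lief)/\gamma_{c+1}^{\Lie}(\Lief,\Lier)$ because $\Lies$ is not contained in $\gamma_{c+1}^{\Lie}(\Lief)$.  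So the collapse you describe cannot be carried out, and your alternative ``freeness in $\Lief$'' route is only a hope, not an argument.  It is worth noting that your diagnosis --- that once $\gamma_{c+1}^{\Lie}(\widetilde\Lief)=0$ the hypothesis ${\cal M}_{\Lie}^{(c)}(\Lieq)=0$ has already been fully spent --- is exactly the point at which the paper's own step ``$\Lien \cap \gamma_{c+1}^{\Lie}(\Lieg) \subseteq \gamma_{c+1}^{\Lie}(\Lieg,\Lien)$, then $\Lien \subseteq \gamma_{c+1}^{\Lie}(\Lieg,\Lien)$'' is asserted without justification (both sides of the first inclusion are already zero under the nilpotency hypothesis).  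So the difficulty you flag is real, and neither your sketch nor the paper's compressed argument disposes of it transparently.
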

\begin{proof}
Let $\Lien = \ker(f)$, then ${\cal M}_{\Lie}^{(c)}(\Lieg/\Lien) =0$.
From the exact sequence in Proposition \ref{exact sequences} {\it (a)} we have that $\Lien \cap \gamma_{c+1}^{\Lie}(\Lieg) \subseteq \gamma_{c+1}^{\Lie}(\Lieg, \Lien)$, then $\Lien \subseteq \gamma_{c+1}^{\Lie}(\Lieg, \Lien)$. Obviously $\supseteq$ is true, then  $\Lien = \gamma_{c+1}^{\Lie}(\Lieg, \Lien) \subseteq \gamma_{c+1}^{\Lie}(\Lieg) = 0$. Consequently, $f$ is an isomorphism.
\end{proof}

\section{On the dimension of the $c$-nilpotent Schur $\Lie$-multiplier} \label{dimension}

 This section is devoted to obtain some exact sequences involving the $c$-nilpotent Schur $\Lie$-multiplier and some formulas concerning dimensions of the corresponding underlying vector spaces.

\begin{Pro} \label{exact sequences}
Let $0 \to \Lier \to \Lief \stackrel{\rho} \to \Lieq \to 0$ be a free presentation of a Leibniz algebra $\Lieq$. Let $\Lien$ a two-sided ideal of $\Lieq$ and $\Lies$ a two-sided ideal of $\Lief$ such that $\Lien \cong \frac{\Lies}{\Lier}$ ($\Lies = \ker(\pi \circ \rho)$, where $\pi : \Lieq \twoheadrightarrow \frac{\Lieq}{\Lien}$ is the canonical projection). Then the following sequences are exact:
\begin{enumerate}
\item[(a)] $0 \to \frac{\Lier \cap \gamma_{c+1}^{\Lie}(\Lief, \Lies)}{\gamma_{c+1}^{\Lie}(\Lief, \Lier)} \to {\cal M}_{\Lie}^{(c)} \left(\Lieq \right) \to {\cal M}_{\Lie}^{(c)} \left( \frac{\Lieq}{\Lien} \right)  \to \frac{\Lien \cap \gamma_{c+1}^{\Lie}(\Lieq)}{\gamma_{c+1}^{\Lie}(\Lieq, \Lien)} \to 0.$

    \item[(b)] ${\cal M}_{\Lie}^{(c)} (\Lieq) \to {\cal M}_{\Lie}^{(c)} \left( \frac{\Lieq}{\Lien} \right) \to \frac{\Lien \cap \gamma_{c+1}^{\Lie}(\Lieq)}{\gamma_{c+1}^{\Lie}(\Lieq, \Lien)} \to \frac{\Lien}{\gamma_{c+1}^{\Lie}(\Lieq, \Lien)} \to \frac{\Lieq}{\gamma_{c+1}^{\Lie}(\Lieq)} \to \frac{\Lieq}{\Lien + \gamma_{c+1}^{\Lie}(\Lieq)} \to 0.$

    \item[(c)] $\Lien \otimes^c \Lieq_{\Lie} \to {\cal M}_{\Lie}^{(c)} (\Lieq) \to {\cal M}_{\Lie}^{(c)} \left( \frac{\Lieq}{\Lien} \right) \to \Lien \cap \gamma_{c+1}^{\Lie}(\Lieq) \to 0$,  provided that $0 \to \Lien \to \Lieq \to \frac{\Lieq}{\Lien} \to 0$ is a $c$-{\Lie}-central extension. Here $\Lien \otimes^c \Lieq = \Lien \otimes \underbrace{\Lieq \otimes \dots \otimes \Lieq}_{c\, {\rm times}}$.
\end{enumerate}
\end{Pro}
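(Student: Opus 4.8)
The plan is to prove (a) by a direct diagram chase, to deduce (b) by prolonging the sequence of (a) with evident inclusions and projections, and to deduce (c) by specialising (a) to the $c$-$\Lie$-central situation and prefixing a surjection coming from the tensor power. For (a), I would run the argument with the two free presentations $0\to\Lier\to\Lief\stackrel{\rho}{\to}\Lieq\to0$ and $0\to\Lies\to\Lief\stackrel{\pi\circ\rho}{\to}\Lieq/\Lien\to0$ side by side, as in the proof of the preceding Theorem (recall $\Lies=\ker(\pi\circ\rho)$, so $\Lier\subseteq\Lies=\rho^{-1}(\Lien)$ and $\rho(\Lies)=\Lien$). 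The only structural input is that a surjection carries the relative lower $\Lie$-central series onto that of the image, whence $\rho(\gamma_{c+1}^{\Lie}(\Lief))=\gamma_{c+1}^{\Lie}(\Lieq)$ and $\rho(\gamma_{c+1}^{\Lie}(\Lief,\Lies))=\gamma_{c+1}^{\Lie}(\Lieq,\Lien)$. The four maps are the obvious ones: $\gamma_{c+1}^{\Lie}(\Lief,\Lier)\subseteq\gamma_{c+1}^{\Lie}(\Lief,\Lies)\subseteq\gamma_{c+1}^{\Lie}(\Lief)$ makes $\Lier\cap\gamma_{c+1}^{\Lie}(\Lief,\Lies)\hookrightarrow\Lier\cap\gamma_{c+1}^{\Lie}(\Lief)$ induce the first (injective) arrow; $\Lier\subseteq\Lies$ induces ${\cal M}_{\Lie}^{(c)}(\Lieq)\to{\cal M}_{\Lie}^{(c)}(\Lieq/\Lien)$; and $\rho$ induces the arrow onto $\frac{\Lien\cap\gamma_{c+1}^{\Lie}(\Lieq)}{\gamma_{c+1}^{\Lie}(\Lieq,\Lien)}$. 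Exactness is then a routine chase: surjectivity at the last term and exactness at ${\cal M}_{\Lie}^{(c)}(\Lieq/\Lien)$ follow by lifting elements of $\gamma_{c+1}^{\Lie}(\Lieq)$ through $\rho$ and using $\ker\rho=\Lier\subseteq\Lies=\rho^{-1}(\Lien)$; exactness at ${\cal M}_{\Lie}^{(c)}(\Lieq)$ is the remark that $x+\gamma_{c+1}^{\Lie}(\Lief,\Lier)$ dies in ${\cal M}_{\Lie}^{(c)}(\Lieq/\Lien)$ iff $x\in\gamma_{c+1}^{\Lie}(\Lief,\Lies)$; injectivity of the first arrow is immediate.

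For (b), I would splice onto the tail of (a). Since $\gamma_{c+1}^{\Lie}(\Lieq,\Lien)\subseteq\Lien\cap\gamma_{c+1}^{\Lie}(\Lieq)\subseteq\gamma_{c+1}^{\Lie}(\Lieq)$, one has natural maps $\frac{\Lien\cap\gamma_{c+1}^{\Lie}(\Lieq)}{\gamma_{c+1}^{\Lie}(\Lieq,\Lien)}\to\frac{\Lien}{\gamma_{c+1}^{\Lie}(\Lieq,\Lien)}\to\frac{\Lieq}{\gamma_{c+1}^{\Lie}(\Lieq)}$, the second one (induced by $\Lien\hookrightarrow\Lieq$) having kernel exactly $\frac{\Lien\cap\gamma_{c+1}^{\Lie}(\Lieq)}{\gamma_{c+1}^{\Lie}(\Lieq,\Lien)}$, followed by the canonical projection $\frac{\Lieq}{\gamma_{c+1}^{\Lie}(\Lieq)}\twoheadrightarrow\frac{\Lieq}{\Lien+\gamma_{c+1}^{\Lie}(\Lieq)}$, whose kernel $\frac{\Lien+\gamma_{c+1}^{\Lie}(\Lieq)}{\gamma_{c+1}^{\Lie}(\Lieq)}$ is the image of the preceding map. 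Concatenating these with ${\cal M}_{\Lie}^{(c)}(\Lieq)\to{\cal M}_{\Lie}^{(c)}(\Lieq/\Lien)\to\frac{\Lien\cap\gamma_{c+1}^{\Lie}(\Lieq)}{\gamma_{c+1}^{\Lie}(\Lieq,\Lien)}$ from (a) yields the sequence of (b).

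For (c), the hypothesis that $0\to\Lien\to\Lieq\to\Lieq/\Lien\to0$ is $c$-$\Lie$-central gives $\gamma_{c+1}^{\Lie}(\Lieq,\Lien)=0$, so $\frac{\Lien\cap\gamma_{c+1}^{\Lie}(\Lieq)}{\gamma_{c+1}^{\Lie}(\Lieq,\Lien)}=\Lien\cap\gamma_{c+1}^{\Lie}(\Lieq)$, and from $\rho(\gamma_{c+1}^{\Lie}(\Lief,\Lies))=0$ we get $\gamma_{c+1}^{\Lie}(\Lief,\Lies)\subseteq\Lier$, so the leading term of (a) becomes $\frac{\gamma_{c+1}^{\Lie}(\Lief,\Lies)}{\gamma_{c+1}^{\Lie}(\Lief,\Lier)}$. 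It then remains to exhibit a surjection $\Lien\otimes^c\Lieq_{\Lie}\twoheadrightarrow\frac{\gamma_{c+1}^{\Lie}(\Lief,\Lies)}{\gamma_{c+1}^{\Lie}(\Lief,\Lier)}$ and place it in front of the specialised sequence of (a), which then reads $\Lien\otimes^c\Lieq_{\Lie}\to{\cal M}_{\Lie}^{(c)}(\Lieq)\to{\cal M}_{\Lie}^{(c)}(\Lieq/\Lien)\to\Lien\cap\gamma_{c+1}^{\Lie}(\Lieq)\to0$. I would obtain this surjection from the multilinear rule sending $(n,\overline{q_1},\dots,\overline{q_c})$ to the class modulo $\gamma_{c+1}^{\Lie}(\Lief,\Lier)$ of the left-normed $\Lie$-commutator $[[\cdots[[\widetilde n,\widetilde q_1]_{\Lie},\widetilde q_2]_{\Lie}\cdots]_{\Lie},\widetilde q_c]_{\Lie}$, for any lifts $\widetilde n\in\Lies$ of $n$ and $\widetilde q_i\in\Lief$ of $q_i$; surjectivity is clear because $\gamma_{c+1}^{\Lie}(\Lief,\Lies)$ is spanned by exactly such brackets with $s$ ranging over $\Lies$, and the image of the induced map on $\Lien\otimes^c\Lieq_{\Lie}$ equals $\frac{\gamma_{c+1}^{\Lie}(\Lief,\Lies)}{\gamma_{c+1}^{\Lie}(\Lief,\Lier)}$, which by (a) is the kernel of ${\cal M}_{\Lie}^{(c)}(\Lieq)\to{\cal M}_{\Lie}^{(c)}(\Lieq/\Lien)$.

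The main obstacle is the well-definedness of this last map: one must check that the class of the iterated bracket above modulo $\gamma_{c+1}^{\Lie}(\Lief,\Lier)$ is independent of the chosen lifts and factors through the Liezation in each $\Lieq$-slot. Altering $\widetilde n$ by an element of $\Lier$ changes the bracket by an element of $\gamma_{c+1}^{\Lie}(\Lief,\Lier)$, which is immediate; the delicate case is altering some $\widetilde q_i$ by an element of $\gamma_2^{\Lie}(\Lief)+\Lier$, where one must expand the iterated $\Lie$-bracket with the Leibniz identity and absorb the resulting correction terms into $\gamma_{c+1}^{\Lie}(\Lief,\Lier)$, using the $c$-$\Lie$-central hypothesis $\gamma_{c+1}^{\Lie}(\Lief,\Lies)\subseteq\Lier$ together with a Hall--Witt / three-subalgebras type manipulation for the relative $\Lie$-lower central series. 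This commutator bookkeeping --- the Leibniz-algebra analogue of the calculus behind the classical Ganea exact sequence --- is where the real work sits; everything else reduces to diagram chasing.
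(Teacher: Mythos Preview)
Your proposal is correct and follows essentially the same route as the paper: define the natural map $\Pi:{\cal M}_{\Lie}^{(c)}(\Lieq)\to{\cal M}_{\Lie}^{(c)}(\Lieq/\Lien)$ induced by $\Lier\subseteq\Lies$, identify its kernel and cokernel (the paper packages this as a $3\times3$ diagram, you do the equivalent element chase), splice on the evident inclusion/projection tail for (b), and for (c) specialise (a) under $\gamma_{c+1}^{\Lie}(\Lieq,\Lien)=0$ and prefix the surjection $\sigma:\Lien\otimes^c\Lieq_{\Lie}\twoheadrightarrow\gamma_{c+1}^{\Lie}(\Lief,\Lies)/\gamma_{c+1}^{\Lie}(\Lief,\Lier)$ defined by iterated $\Lie$-brackets of lifts. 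Your discussion of the well-definedness of $\sigma$ is in fact more careful than the paper's, which simply asserts that the hypothesis $\gamma_{c+1}^{\Lie}(\Lieq,\Lien)=0$ guarantees it.
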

\begin{proof} {\it (a)} Consider the following diagram:
\begin{equation} \label{free present diagr}
\xymatrix{& & 0 \ar[d] & 0 \ar[ld]\\
&  & {\Lier} \ar[d]\ar[ld] \\
0 \ar[r] & {\Lies} \ \ar[r] \ar[d] & {\Lief} \ar[d]^{\rho} \ar[rd]^{\pi \circ \rho} \\
0 \ar[r]& {\Lien} \ar[r] \ar[d]& {\Lieq} \ar[r]^{\pi} \ar[d]& \frac{{\Lieq}}{\Lien} \ar[r] \ar[dr] & 0 \\
 & 0 & 0 &  & 0
}
\end{equation}
Define $\Pi : {\cal M}_{\Lie}^{(c)} (\Lieq)= \frac{\Lier \cap {\gamma_{c+1}^{\Lie}(\Lief)}}{{\gamma_{c+1}^{\Lie}(\Lief,\Lier)}} \to  \frac{\Lies \cap {\gamma_{c+1}^{\Lie}(\Lief)}}{{\gamma_{c+1}^{\Lie}(\Lief,\Lies)}} = {\cal M}_{\Lie}^{(c)} \left( \frac{\Lieq}{\Lien} \right)$ by $\Pi (r + {\gamma_{c+1}^{\Lie}(\Lief,\Lier)}) = r + {\gamma_{c+1}^{\Lie}(\Lief,\Lies)}$. Obviously $\ker(\Pi) = \frac{\Lier \cap \gamma_{c+1}^{\Lie}(\Lief, \Lies)}{\gamma_{c+1}^{\Lie}(\Lief, \Lier)}$ and ${\sf Coker}(\Pi) =  \frac{\Lien \cap \gamma_{c+1}^{\Lie}(\Lieq)}{\gamma_{c+1}^{\Lie}(\Lieq, \Lien)}$ thanks to the following commutative diagram:
\begin{equation}\label{diagram1}
\xymatrix{
 \Lier \cap \gamma_{c+1}^{\Lie}(\Lief, \Lies)\  \ar@{>->}[r]  \ar@{>->}[d] & \Lier \cap \gamma_{c+1}^{\Lie}(\Lief) \ar@{>>}[r] \ar@{>->}[d] & \frac{\Lier \cap \gamma_{c+1}^{\Lie}(\Lief)}{\Lier \cap \gamma_{c+1}^{\Lie}(\Lief, \Lies)} \ar@{>->}[d]\\
\gamma_{c+1}^{\Lie}(\Lief, \Lies)\  \ar@{>->}[r] \ar@{>>}[d] & \Lies \cap \gamma_{c+1}^{\Lie}(\Lief) \ar@{>>}[r] \ar@{>>}[d] & \frac{\Lies \cap \gamma_{c+1}^{\Lie}(\Lief)}{\gamma_{c+1}^{\Lie}(\Lief, \Lies)} \ar@{>>}[d]\\
 \gamma_{c+1}^{\Lie}(\Lieq, \Lien) \cong \frac{\gamma_{c+1}^{\Lie}(\Lief, \Lies)}{\Lier \cap \gamma_{c+1}^{\Lie}(\Lief, \Lies)} \  \ar@{>->}[r]& \Lien \cap  \gamma_{c+1}^{\Lie}(\Lieq)  \ar@{>>}[r] & \frac{\Lien \cap  \gamma_{c+1}^{\Lie}(\Lieq)}{\gamma_{c+1}^{\Lie}(\Lieq, \Lien) }
}
\end{equation}

{\it (b)} Combine statement {\it (a)} with the following diagram:
\[
\xymatrix{
 \Lien \cap \gamma_{c+1}^{\Lie}(\Lieq)\ \ar@{>->}[rr] \ar@{>->}[dd] & & \gamma_{c+1}^{\Lie}(\Lieq) \ar@{>>}[r] \ar@{>->}[dd] \ar@{>->}[dl]& \frac{ \gamma_{c+1}^{\Lie}(\Lieq)}{\Lien \cap \gamma_{c+1}^{\Lie}(\Lieq)} \ar@{>->}[dd]\\
 & \Lien + \gamma_{c+1}^{\Lie}(\Lieq)  \ar@{>>}[ddl]& & \\
\Lien\ \ar@{>->}[rr] \ar@{>>}[d] \ar@{>->}[ur]&  &\Lieq \ar@{>>}[r] \ar@{>>}[d] & \frac{\Lieq}{\Lien} \ar@{>>}[d]\\
 \frac{\Lien + \gamma_{c+1}^{\Lie}(\Lieq)}{\gamma_{c+1}^{\Lie}(\Lieq)}\ \ar@{>->}[rr]& & \frac{\Lieq}{\gamma_{c+1}^{\Lie}(\Lieq)} \ar@{>>}[r] & \frac{\Lieq}{\Lien + \gamma_{c+1}^{\Lie}(\Lieq)}
}
\]
and have in mind the isomorphisms $\frac{\Lien + \gamma_{c+1}^{\Lie}(\Lieq)}{\gamma_{c+1}^{\Lie}(\Lieq)} \cong \frac{\Lien}{\Lien \cap \gamma_{c+1}^{\Lie}(\Lieq)} \cong \frac{\Lien}{\gamma_{c+1}^{\Lie}(\Lieq, \Lien)}$.

{\it (c)}  If $\Lien$ is $c$-{\Lie}-central in $\Lieq$, then {\it (a)} provides the exact sequence
\begin{equation} \label{four term}
0 \to \frac{\gamma_{c+1}^{\Lie}(\Lief, \Lies)}{\gamma_{c+1}^{\Lie}(\Lief, \Lier)} \to {\cal M}_{\Lie}^{(c)} (\Lieq) \to {\cal M}_{\Lie}^{(c)} \left( \frac{\Lieq}{\Lien} \right) \to \Lien \cap \gamma_{c+1}^{\Lie}(\Lieq)\to 0
\end{equation}
Now consider  $\sigma : \Lien \otimes \Lieq_{\Lie} \otimes \stackrel{c} \dots \otimes \Lieq_{\Lie} \to \frac{\gamma_{c+1}^{\Lie}(\Lief, \Lies)}{\gamma_{c+1}^{\Lie}(\Lief, \Lier)}$ given by $\sigma(n \otimes \overline{q_1} \otimes \dots \otimes \overline{q_c} )= [[[s,f_1]_{\Lie},f_2]_{\Lie}, \dots, f_c]_{\Lie} + \gamma_{c+1}^{\Lie}(\Lief, \Lier)$, where $\rho(s) = n$ and $\rho(f_i) = q_i, 1 \leq i \leq c$. The condition $\gamma_{c+1}^{\Lie}(\Lieq, \Lien)=0$ guarantees the well-definition of $\sigma$. Moreover $\sigma$ is a surjection, which completes the proof.
\end{proof}

\begin{Co} \label{several properties}
Let $\Lien$ be a two-sided ideal of a finite-dimensional Leibniz algebra $\Lieq$ together with the free presentations in diagram  (\ref{free present diagr}). Then
\begin{enumerate}
\item[(a)] ${\cal M}_{\Lie}^{(c)} (\Lieq)$ is finite-dimensional.

\item[(b)] ${\rm dim} \left(  {\cal M}_{\Lie}^{(c)} \left( \frac{\Lieq}{\Lien} \right) \right) \leq {\rm dim} \left( {\cal M}_{\Lie}^{(c)} (\Lieq) \right) +
{\rm dim} \left( \frac{ \Lien \cap  \gamma_{c+1}^{\Lie}(\Lieq)}{\gamma_{c+1}^{\Lie}(\Lieq, \Lien)} \right).$

\item[(c)] ${\rm dim} \left( {\cal M}_{\Lie}^{(c)} (\Lieq) \right) + {\rm dim} \left( \Lien \cap \gamma_{c+1}^{\Lie}(\Lieq) \right) = {\rm dim} \left(  {\cal M}_{\Lie}^{(c)} \left( \frac{\Lieq}{\Lien} \right) \right)  + {\rm dim} \left( \gamma_{c+1}^{\Lie}(\Lieq, \Lien) \right) + {\rm dim} \left( \frac{ \Lier \cap  \gamma_{c+1}^{\Lie}(\Lief, \Lies)}{\gamma_{c+1}^{\Lie}(\Lief, \Lier)} \right)$.

\item[(d)] ${\rm dim} \left( {\cal M}_{\Lie}^{(c)} (\Lieq) \right) + {\rm dim} \left( \Lien \cap \gamma_{c+1}^{\Lie}(\Lieq) \right) = {\rm dim} \left(  {\cal M}_{\Lie}^{(c)} \left( \frac{\Lieq}{\Lien} \right) \right)  + {\rm dim} \left( \frac{ \gamma_{c+1}^{\Lie}(\Lief, \Lies)}{\gamma_{c+1}^{\Lie}(\Lief, \Lier)} \right)$.

    \item[(e)] ${\rm dim} \left( {\cal M}_{\Lie}^{(c)} (\Lieq) \right) + {\rm dim} \left( \gamma_{c+1}^{\Lie}(\Lieq) \right) = {\rm dim} \left( \gamma_{c+1}^{\Lie} \left(\frac{ \Lief}{\gamma_{c+1}^{\Lie}(\Lief, \Lier)} \right) \right).$

        \item[(f)] If ${\cal M}_{\Lie}^{(c)} (\Lieq) =0$, then ${\cal M}_{\Lie}^{(c)} \left( \frac{\Lieq}{\Lien} \right)  \cong \frac{ \Lien \cap  \gamma_{c+1}^{\Lie}(\Lieq)}{\gamma_{c+1}^{\Lie}(\Lieq, \Lien)}.$

    \item[(g)]  ${\rm dim} \left( {\cal M}_{\Lie}^{(c)} (\Lieq) \right) + {\rm dim} \left( \Lien \cap \gamma_{c+1}^{\Lie}(\Lieq) \right)  \leq {\rm dim} \left(  {\cal M}_{\Lie}^{(c)} \left( \frac{\Lieq}{\Lien} \right) \right)  + {\rm dim} \left( \Lien \otimes^c \Lieq_{\Lie}  \right)$,  provided that $0 \to \Lien \to \Lieq \to \frac{\Lieq}{\Lien} \to 0$ is a $c$-{\Lie}-central extension.
\end{enumerate}
\end{Co}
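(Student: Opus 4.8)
The plan is to derive every item from the exact sequences of Proposition~\ref{exact sequences}, the short exact sequence of Example~\ref{example exact sequence}(a), and additivity of dimension along exact sequences of finite-dimensional vector spaces. The only part needing a genuinely new argument is (a); once that is established, so that all the spaces appearing below are finite-dimensional, the rest is dimension bookkeeping.

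For (a), I would reduce, via the embedding ${\cal M}_{\Lie}^{(c)}(\Lieq)\hookrightarrow\gamma_{c+1}^{\Lie^{\ast}}(\Lieq)$ furnished by Example~\ref{example exact sequence}(a), to showing that $\gamma_{c+1}^{\Lie^{\ast}}(\Lieq)$ is finite-dimensional. Since $\Lieq$ is finite-dimensional it is finitely generated, so the free presentation $0\to\Lier\to\Lief\stackrel{\rho}{\to}\Lieq\to0$ may be chosen with $\Lief$ of finite rank. From the inductive definition $\gamma_{i+1}^{\Lie}(\Lief)=[\gamma_i^{\Lie}(\Lief),\Lief]_{\Lie}$, the subspace $\gamma_{c+1}^{\Lie}(\Lief)$ is linearly spanned by left-normed $\Lie$-brackets $[[\cdots[f_1,f_2]_{\Lie},f_3]_{\Lie},\dots,f_{c+1}]_{\Lie}$ with $f_i\in\Lief$; modulo $\gamma_{c+1}^{\Lie}(\Lief,\Lier)$ such a bracket depends only on the images $\rho(f_1),\dots,\rho(f_{c+1})$ in $\Lieq$, so fixing a basis $q_1,\dots,q_n$ of $\Lieq$ and lifts $\tilde q_j\in\Lief$ one sees that $\gamma_{c+1}^{\Lie^{\ast}}(\Lieq)=\gamma_{c+1}^{\Lie}\!\bigl(\Lief/\gamma_{c+1}^{\Lie}(\Lief,\Lier)\bigr)$ is spanned by the finitely many left-normed brackets of the $\tilde q_j$, hence is finite-dimensional. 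As $\gamma_{c+1}^{\Lie}(\Lieq)$ is a subspace of the finite-dimensional $\Lieq$, the sequence $0\to{\cal M}_{\Lie}^{(c)}(\Lieq)\to\gamma_{c+1}^{\Lie^{\ast}}(\Lieq)\to\gamma_{c+1}^{\Lie}(\Lieq)\to0$ is exact with all terms finite-dimensional; comparing dimensions and using $\gamma_{c+1}^{\Lie^{\ast}}(\Lieq)=\gamma_{c+1}^{\Lie}\!\bigl(\Lief/\gamma_{c+1}^{\Lie}(\Lief,\Lier)\bigr)$ already yields (e).

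For the rest I would use the four-term exact sequence of Proposition~\ref{exact sequences}(a),
\[
0 \to \frac{\Lier\cap\gamma_{c+1}^{\Lie}(\Lief,\Lies)}{\gamma_{c+1}^{\Lie}(\Lief,\Lier)} \to {\cal M}_{\Lie}^{(c)}(\Lieq) \to {\cal M}_{\Lie}^{(c)}\!\Bigl(\tfrac{\Lieq}{\Lien}\Bigr) \to \frac{\Lien\cap\gamma_{c+1}^{\Lie}(\Lieq)}{\gamma_{c+1}^{\Lie}(\Lieq,\Lien)} \to 0 .
\]
Exactness of its tail ${\cal M}_{\Lie}^{(c)}(\Lieq)\to{\cal M}_{\Lie}^{(c)}(\Lieq/\Lien)\to\frac{\Lien\cap\gamma_{c+1}^{\Lie}(\Lieq)}{\gamma_{c+1}^{\Lie}(\Lieq,\Lien)}\to0$ gives the inequality (b); the vanishing of the alternating sum of dimensions gives (c), after rewriting $\dim\frac{\Lien\cap\gamma_{c+1}^{\Lie}(\Lieq)}{\gamma_{c+1}^{\Lie}(\Lieq,\Lien)}=\dim(\Lien\cap\gamma_{c+1}^{\Lie}(\Lieq))-\dim\gamma_{c+1}^{\Lie}(\Lieq,\Lien)$, valid because $\gamma_{c+1}^{\Lie}(\Lieq,\Lien)\subseteq\Lien\cap\gamma_{c+1}^{\Lie}(\Lieq)$. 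Inserting into (c) the isomorphism $\gamma_{c+1}^{\Lie}(\Lieq,\Lien)\cong\frac{\gamma_{c+1}^{\Lie}(\Lief,\Lies)}{\Lier\cap\gamma_{c+1}^{\Lie}(\Lief,\Lies)}$ read off from diagram~(\ref{diagram1}) collapses the two extra summands to $\dim\frac{\gamma_{c+1}^{\Lie}(\Lief,\Lies)}{\gamma_{c+1}^{\Lie}(\Lief,\Lier)}$, which is (d). Statement (f) is the special case ${\cal M}_{\Lie}^{(c)}(\Lieq)=0$, in which the last arrow of the four-term sequence is forced to be an isomorphism (it is always surjective). Finally, if $0\to\Lien\to\Lieq\to\Lieq/\Lien\to0$ is $c$-$\Lie$-central then $\gamma_{c+1}^{\Lie}(\Lieq,\Lien)=0$, so (d) reads $\dim{\cal M}_{\Lie}^{(c)}(\Lieq)+\dim(\Lien\cap\gamma_{c+1}^{\Lie}(\Lieq))=\dim{\cal M}_{\Lie}^{(c)}(\Lieq/\Lien)+\dim\frac{\gamma_{c+1}^{\Lie}(\Lief,\Lies)}{\gamma_{c+1}^{\Lie}(\Lief,\Lier)}$, and the surjection $\sigma:\Lien\otimes^c\Lieq_{\Lie}\twoheadrightarrow\frac{\gamma_{c+1}^{\Lie}(\Lief,\Lies)}{\gamma_{c+1}^{\Lie}(\Lief,\Lier)}$ of Proposition~\ref{exact sequences}(c) bounds the last dimension by $\dim(\Lien\otimes^c\Lieq_{\Lie})$, giving (g).

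The one delicate point I foresee is the spanning step in (a): checking that replacing an entry of a left-normed $\Lie$-bracket by another representative of its class in $\Lieq$ alters the bracket only by an element of $\gamma_{c+1}^{\Lie}(\Lief,\Lier)$. This amounts to showing that an iterated $\Lie$-commutator of length $c+1$ with a factor in $\Lier$ lies in $\gamma_{c+1}^{\Lie}(\Lief,\Lier)$ regardless of that factor's position, which one handles with the Leibniz identity together with the symmetry of the $\Lie$-bracket to move the $\Lier$-factor into the defining innermost slot. Everything else is exactness and linear algebra.
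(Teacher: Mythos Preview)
Your proposal is correct and follows essentially the same route as the paper: parts (b)--(d), (f), and (g) are all derived from the four-term exact sequence of Proposition~\ref{exact sequences}(a) (and the surjection $\sigma$ for (g)) exactly as the paper does, with the same use of diagram~(\ref{diagram1}) to pass from (c) to (d). The only cosmetic differences are that you supply a genuine finiteness argument for (a) where the paper simply writes ``straightforward'', and you obtain (e) directly from the short exact sequence of Example~\ref{example exact sequence}(a) whereas the paper gets it by specializing (d) to $\Lien=\Lieq$; both derivations of (e) are one-line.
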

\begin{proof}
{\it (a)}  Straightforward since $\Lieq$ is a finite-dimensional Leibniz algebra.

\noindent {\it (b)} From Proposition \ref{exact sequences} {\it (a)}, the exact sequence yields
 \begin{equation} \label{eq}
{\rm dim} \left(  {\cal M}_{\Lie}^{(c)} (\frac{\Lieq}{\Lien}) \right) + {\rm dim} \left( \frac{ \Lier \cap  \gamma_{c+1}^{\Lie}(\Lief,\Lies)}{\gamma_{c+1}^{\Lie}(\Lief, \Lier)} \right)= {\rm dim} \left( {\cal M}_{\Lie}^{(c)} (\Lieq) \right) +
{\rm dim} \left( \frac{ \Lien \cap  \gamma_{c+1}^{\Lie}(\Lieq)}{\gamma_{c+1}^{\Lie}(\Lieq, \Lien)} \right)
\end{equation}
 The result follows.

\noindent {\it (c)}  The result follows from  the equation (\ref{eq}) above combined with the equality
${\rm dim} \left( \frac{ \Lien \cap  \gamma_{c+1}^{\Lie}(\Lieq)}{\gamma_{c+1}^{\Lie}(\Lieq, \Lien)} \right)={\rm dim} \left( \Lien \cap \gamma_{c+1}^{\Lie}(\Lieq) \right) -  {\rm dim} \left( \gamma_{c+1}^{\Lie}(\Lieq, \Lien) \right).$

\noindent {\it (d)} The result holds from {\it (c)}, since from diagram (\ref{diagram1}) and by the isomorphism theorem we have
$$\gamma_{c+1}^{\Lie}(\Lieq, \Lien) \cong \frac{\gamma_{c+1}^{\Lie}(\Lief, \Lies)}{\Lier \cap \gamma_{c+1}^{\Lie}(\Lief, \Lies)} \cong
\frac{\gamma_{c+1}^{\Lie}(\Lief, \Lies)/\gamma_{c+1}^{\Lie}(\Lief, \Lier)}{\Lier \cap \gamma_{c+1}^{\Lie}(\Lief, \Lies)/\gamma_{c+1}^{\Lie}(\Lief, \Lier)}$$

\noindent{\it (e)} Apply statement {\it (d)} to the particular case $\Lien = \Lieq$.

\noindent{\it  (f)} Straightforward from  Proposition \ref{exact sequences} {\it (a)}.

\noindent{\it  (g)} Straightforward from exact sequence in Proposition 4.1 {\it (c)}.
\end{proof}

\begin{De} \label{filiform}
 A $\Lie$-nilpotent Leibniz algebra $\Lieq$ of class $c$ is said to be of maximal $\Lie$-class $c$ if ${\rm dim} \left( \frac{\gamma_j^{\Lie}(\Lieq)}{\gamma_{j+1}^{\Lie}(\Lieq)} \right) = 1$, for $j= 2, 3, \dots, c$ and ${\rm dim} \left( \frac{\Lieq}{\gamma_{2}^{\Lie}(\Lieq)} \right) = 2$.
\end{De}

\begin{Rem} The absolute case of Definition \ref{filiform}, that is when the Liezation functor is substituted by the abelianization functor, is equivalent to the notion of filiform Leibniz algebra \cite{AO}.
\end{Rem}

\begin{Pro}
Let $\Lieq$ be a $\Lie$-nilpotent Leibniz algebra of class $c$.
If $\Lieq$ is of maximal $\Lie$-class $c$, then ${\ze}_i^{\Lie}({\Lieq}) = \gamma_{c-i+1}^{\Lie}(\Lieq)$, for $0 \leq i \leq c$.
\end{Pro}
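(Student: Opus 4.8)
The plan is to establish the two inclusions $\gamma_{c-i+1}^{\Lie}(\Lieq)\subseteq {\ze}_i^{\Lie}(\Lieq)$ and ${\ze}_i^{\Lie}(\Lieq)\subseteq \gamma_{c-i+1}^{\Lie}(\Lieq)$ separately, for $0\le i\le c$. The first holds for \emph{any} $\Lie$-nilpotent Leibniz algebra of class $c$: fixing $i$ and putting $\Lien=\gamma_{c-i+1}^{\Lie}(\Lieq)$, an immediate induction on $j$ from $\gamma_1^{\Lie}(\Lieq,\Lien)=\Lien$ and $\gamma_j^{\Lie}(\Lieq,\Lien)=[\gamma_{j-1}^{\Lie}(\Lieq,\Lien),\Lieq]_{\Lie}$ gives $\gamma_j^{\Lie}(\Lieq,\gamma_{c-i+1}^{\Lie}(\Lieq))=\gamma_{c-i+j}^{\Lie}(\Lieq)$; in particular $\gamma_{i+1}^{\Lie}(\Lieq,\gamma_{c-i+1}^{\Lie}(\Lieq))=\gamma_{c+1}^{\Lie}(\Lieq)=0$, so $0\to\gamma_{c-i+1}^{\Lie}(\Lieq)\to\Lieq\to\Lieq/\gamma_{c-i+1}^{\Lie}(\Lieq)\to 0$ is an $i$-$\Lie$-central extension and Proposition \ref{inclusion} yields $\gamma_{c-i+1}^{\Lie}(\Lieq)\subseteq {\ze}_i^{\Lie}(\Lieq)$ (for $i=0$ this reads $\gamma_{c+1}^{\Lie}(\Lieq)=0={\ze}_0^{\Lie}(\Lieq)$, the class-$c$ hypothesis).

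For the reverse inclusion I would induct on $c$. For $c=1$, $\Lieq$ is $\Lie$-abelian, $\Lieq={\ze}_1^{\Lie}(\Lieq)=\gamma_1^{\Lie}(\Lieq)$ and $0={\ze}_0^{\Lie}(\Lieq)=\gamma_2^{\Lie}(\Lieq)$. Let $c\ge 2$ and assume the statement for maximal $\Lie$-class algebras of class $c-1$; the crux is the auxiliary claim $Z_{\Lie}(\Lieq)=\gamma_c^{\Lie}(\Lieq)$, discussed below. Granting it, $\overline{\Lieq}:=\Lieq/Z_{\Lie}(\Lieq)=\Lieq/\gamma_c^{\Lie}(\Lieq)$ is again of maximal $\Lie$-class, now of class $c-1$: using $\gamma_j^{\Lie}(\Lieq/N)=(\gamma_j^{\Lie}(\Lieq)+N)/N$, the identity ${\ze}_{j}^{\Lie}(\Lieq/Z_{\Lie}(\Lieq))={\ze}_{j+1}^{\Lie}(\Lieq)/Z_{\Lie}(\Lieq)$ (itself from $C_{\Lieq/Z_{\Lie}(\Lieq)}^{\Lie}(\Lieq/Z_{\Lie}(\Lieq),M/Z_{\Lie}(\Lieq))=C_{\Lieq}^{\Lie}(\Lieq,M)/Z_{\Lie}(\Lieq)$ when $Z_{\Lie}(\Lieq)\subseteq M$), $\dim\gamma_c^{\Lie}(\Lieq)=1$ and ${\ze}_{c-1}^{\Lie}(\Lieq)\ne\Lieq$, the dimension conditions of Definition \ref{filiform} pass to $\overline{\Lieq}$ and its $\Lie$-class drops to $c-1$. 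Applying the induction hypothesis to $\overline{\Lieq}$ gives ${\ze}_{i-1}^{\Lie}(\overline{\Lieq})=\gamma_{c-i+1}^{\Lie}(\overline{\Lieq})$ for $1\le i\le c$, i.e. ${\ze}_i^{\Lie}(\Lieq)/\gamma_c^{\Lie}(\Lieq)=\gamma_{c-i+1}^{\Lie}(\Lieq)/\gamma_c^{\Lie}(\Lieq)$, whence ${\ze}_i^{\Lie}(\Lieq)=\gamma_{c-i+1}^{\Lie}(\Lieq)$; together with the trivial case $i=0$ this closes the induction.

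It remains to prove $Z_{\Lie}(\Lieq)=\gamma_c^{\Lie}(\Lieq)$ for $c\ge 2$. One inclusion is formal: $[\gamma_c^{\Lie}(\Lieq),\Lieq]_{\Lie}=\gamma_{c+1}^{\Lie}(\Lieq)=0$ gives $\gamma_c^{\Lie}(\Lieq)\subseteq Z_{\Lie}(\Lieq)$; also $\dim\Lieq=\dim(\Lieq/\gamma_2^{\Lie}(\Lieq))+\sum_{j=2}^{c}\dim(\gamma_j^{\Lie}(\Lieq)/\gamma_{j+1}^{\Lie}(\Lieq))=c+1$. If $Z_{\Lie}(\Lieq)\subseteq\gamma_2^{\Lie}(\Lieq)$, then $\Lieq/Z_{\Lie}(\Lieq)$ still satisfies $\dim\big((\Lieq/Z_{\Lie}(\Lieq))/\gamma_2^{\Lie}(\Lieq/Z_{\Lie}(\Lieq))\big)=2$ and is $\Lie$-nilpotent of class $c-1$ (as noted above), and any such $\Lieb$ has $\dim\Lieb=2+\sum_{j=2}^{c-1}\dim(\gamma_j^{\Lie}(\Lieb)/\gamma_{j+1}^{\Lie}(\Lieb))\ge c$ since the lower $\Lie$-central series is strictly decreasing below $\gamma_c^{\Lie}(\Lieb)=0$; hence $\dim Z_{\Lie}(\Lieq)\le(c+1)-c=1$, and with $\gamma_c^{\Lie}(\Lieq)\subseteq Z_{\Lie}(\Lieq)$, $\dim\gamma_c^{\Lie}(\Lieq)=1$ we conclude equality. \emph{The main obstacle} is the complementary case $Z_{\Lie}(\Lieq)\not\subseteq\gamma_2^{\Lie}(\Lieq)$, where this dimension count is unavailable and one must argue directly that no $\Lie$-central element can lie outside $[\Lieq,\Lieq]_{\Lie}$; I expect this to use that $\Lieq$ is generated by at most two elements (a consequence of $\dim(\Lieq/\gamma_2^{\Lie}(\Lieq))=2$) together with an analysis of $[\Lieq,\Lieq]_{\Lie}$ modulo ${\ze}_{c-2}^{\Lie}(\Lieq)$, and it is precisely the step that departs from the classical filiform Lie algebra argument.
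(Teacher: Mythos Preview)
Your approach (induction on $c$ via the quotient $\Lieq/Z_{\Lie}(\Lieq)$) differs from the paper's (induction on $i$ for fixed $\Lieq$), but both hinge on the same crux, and the obstacle you single out is a genuine gap that cannot be closed: the case $Z_{\Lie}(\Lieq)\not\subseteq\gamma_2^{\Lie}(\Lieq)$ actually occurs, and the claimed equality fails there. Take the $4$-dimensional Leibniz algebra with basis $\{z,y,w_1,w_2\}$ and only nonzero brackets $[y,y]=w_1$, $[w_1,y]=w_2$. One checks that $\gamma_2^{\Lie}(\Lieq)=\langle w_1,w_2\rangle$, $\gamma_3^{\Lie}(\Lieq)=\langle w_2\rangle$, $\gamma_4^{\Lie}(\Lieq)=0$, so $\Lieq$ has maximal $\Lie$-class $3$ in the sense of Definition~\ref{filiform}; yet $Z_{\Lie}(\Lieq)=\langle z,w_2\rangle\neq\langle w_2\rangle=\gamma_3^{\Lie}(\Lieq)$. (The $c=2$ variant with basis $\{z,y,w\}$ and $[y,y]=w$ already gives $Z_{\Lie}(\Lieq)=\langle z,w\rangle\neq\langle w\rangle=\gamma_2^{\Lie}(\Lieq)$.) The reason this departs from the classical filiform Lie argument is exactly what you suspected: in the $\Lie$-setting $[y,y]_{\Lie}=2[y,y]$ need not vanish, so a \emph{single} element can carry the entire lower $\Lie$-central series, leaving room for an idle $\Lie$-central direct summand $z$ outside $[\Lieq,\Lieq]_{\Lie}$.

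The paper's argument does not escape this. In passing from ``there exists $x_0\in\gamma_{c-i}^{\Lie}(\Lieq)\setminus{\ze}_i^{\Lie}(\Lieq)$'' together with $\dim\bigl(\gamma_{c-i}^{\Lie}(\Lieq)/\gamma_{c-i+1}^{\Lie}(\Lieq)\bigr)=1$ to the conclusion ${\ze}_i^{\Lie}(\Lieq)/\gamma_{c-i+1}^{\Lie}(\Lieq)=0$, it tacitly uses ${\ze}_i^{\Lie}(\Lieq)\subseteq\gamma_{c-i}^{\Lie}(\Lieq)$, which the examples above violate (e.g.\ $z\in{\ze}_1^{\Lie}(\Lieq)\setminus\gamma_2^{\Lie}(\Lieq)$); and at $i=c-1$ the invoked quotient has dimension $2$, not $1$. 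So your diagnosis is correct: the difficulty you isolated is structural, not a matter of technique, and neither your route nor the paper's can be completed without an extra hypothesis ruling out such $\Lie$-central summands.
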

\begin{proof}
The statement is true for $i\in \{0,\,  c\}$ since ${\ze}_0^{\Lie}({\Lieq}) = 0=\gamma_{c+1}^{\Lie}(\Lieq),$ and ${\ze}_c^{\Lie}({\Lieq}) = \Lieq=\gamma_{1}^{\Lie}(\Lieq)$.

 By induction, assume that ${\ze}_{i-1}^{\Lie}({\Lieq}) = \gamma_{c-i+2}^{\Lie}(\Lieq)$, for $1 \leq i < c$. Then ${\ze}_i^{\Lie}({\Lieq}) =C_{\Lieq}^{\Lie}({\Lieq},{\ze}_{i-1}^{\Lie}({\Lieq}))=C_{\Lieq}^{\Lie}({\Lieq},\gamma_{c-i+2}^{\Lie}(\Lieq))$. So $\gamma_{c-i+1}^{\Lie}(\Lieq)\subseteq {\ze}_i^{\Lie}({\Lieq}).$
 Now, it is easy to check that $\frac{{\ze}_i^{\Lie}({\Lieq})}{\gamma_{c-i+1}^{\Lie}(\Lieq)} \varsubsetneq \frac{\gamma_{c-i}^{\Lie}(\Lieq)}{\gamma_{c-i+1}^{\Lie}(\Lieq)}$. Indeed,  since ${\rm dim} \left(\frac{\gamma_{c-i+1}^{\Lie}(\Lieq)}{\gamma_{c-i+2}^{\Lie}(\Lieq)} \right) \neq 0,$ it follows that $\gamma_{c-i+2}^{\Lie}(\Lieq) \varsubsetneq \gamma_{c-i+1}^{\Lie}(\Lieq)$. So let $x \in \gamma_{c-i+1}^{\Lie}(\Lieq)\backslash \gamma_{c-i+2}^{\Lie}(\Lieq)$, then $x = [x_0, m_0]_{\Lie}$, for some $x_0 \in \gamma_{c-i}^{\Lie}(\Lieq)$ and $m_0 \in \Lieq$. Then $x_0 \notin {\ze}_i^{\Lie}(\Lieq)$, otherwise $x =  [x_0, m_0]_{\Lie} \in {\ze}_{i-1}^{\Lie}(\Lieq) = \gamma_{c-i+2}^{\Lie}(\Lieq)$ which is a contradiction.
 Since ${\rm dim} \left(\frac{\gamma_{c-i}^{\Lie}(\Lieq)}{\gamma_{c-i+1}^{\Lie}(\Lieq)} \right) =1,$ it follows that $\frac{{\ze}_i^{\Lie}({\Lieq})}{\gamma_{c-i+1}^{\Lie}(\Lieq)}=0$, hence ${\ze}_i^{\Lie}({\Lieq}) = \gamma_{c-i+1}^{\Lie}(\Lieq)$.
\end{proof}

\begin{Co}
Let $\Lieq$ be a finite dimensional $\Lie$-nilpotent Leibniz algebra of maximal $\Lie$-class $c+1$, then
$${\rm dim} \left( {\cal M}_{\Lie}^{(c)} (\Lieq) \right)   \leq {\rm dim} \left(  {\cal M}_{\Lie}^{(c)} \left(\frac{\Lieq}{Z_{\Lie}(\Lieq)} \right) \right)  + 2^c - 1$$
\end{Co}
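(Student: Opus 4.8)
The plan is to apply Corollary~\ref{several properties}(g) to the two-sided ideal $\Lien = Z_{\Lie}(\Lieq)$, so the first task is to pin down $Z_{\Lie}(\Lieq)$ and to bound $\dim(\Lieq_{\Lie})$. Using the Proposition that precedes this corollary (the identification of the upper $\Lie$-central series of an algebra of maximal $\Lie$-class), now with $c$ replaced by $c+1$, I get ${\ze}_i^{\Lie}(\Lieq) = \gamma_{c+2-i}^{\Lie}(\Lieq)$ for $0 \leq i \leq c+1$, and in particular, for $i=1$, $Z_{\Lie}(\Lieq) = {\ze}_1^{\Lie}(\Lieq) = \gamma_{c+1}^{\Lie}(\Lieq)$. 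Since $\Lieq$ is $\Lie$-nilpotent of class $c+1$ we have $\gamma_{c+2}^{\Lie}(\Lieq)=0$, and the maximal-class hypothesis of Definition~\ref{filiform} forces $\dim\bigl(\gamma_{c+1}^{\Lie}(\Lieq)/\gamma_{c+2}^{\Lie}(\Lieq)\bigr)=1$, so $\dim Z_{\Lie}(\Lieq) = \dim \gamma_{c+1}^{\Lie}(\Lieq) = 1$.

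Next I would set $\Lien = Z_{\Lie}(\Lieq)$ and observe that $0 \to \Lien \to \Lieq \to \Lieq/\Lien \to 0$ is a $c$-$\Lie$-central extension: indeed $\Lien = {\ze}_1^{\Lie}(\Lieq) \subseteq {\ze}_c^{\Lie}(\Lieq)$ because $c\geq 1$, so Proposition~\ref{inclusion} applies. Corollary~\ref{several properties}(g) then gives
\[
\dim\bigl({\cal M}_{\Lie}^{(c)}(\Lieq)\bigr) + \dim\bigl(\Lien \cap \gamma_{c+1}^{\Lie}(\Lieq)\bigr) \;\leq\; \dim\Bigl({\cal M}_{\Lie}^{(c)}\bigl(\Lieq/\Lien\bigr)\Bigr) + \dim\bigl(\Lien \otimes^c \Lieq_{\Lie}\bigr).
\]
Because $\Lien = \gamma_{c+1}^{\Lie}(\Lieq)$, the intersection on the left is just $\Lien$, of dimension $1$; and on the right $\dim(\Lien \otimes^c \Lieq_{\Lie}) = \dim(\Lien)\,\dim(\Lieq_{\Lie})^c = \dim(\Lieq_{\Lie})^c$.

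To finish I would bound $\dim \Lieq_{\Lie}$. Every spanning element $[x,y]+[y,x]$ of $\gamma_2^{\Lie}(\Lieq)=[\Lieq,\Lieq]_{\Lie}$ equals $[x+y,x+y]-[x,x]-[y,y] \in \Lieq^{\rm ann}$, so $\gamma_2^{\Lie}(\Lieq)\subseteq\Lieq^{\rm ann}$ and the canonical map $\Lieq \twoheadrightarrow \Lieq_{\Lie}=\Lieq/\Lieq^{\rm ann}$ factors through $\Lieq/\gamma_2^{\Lie}(\Lieq)$; by Definition~\ref{filiform} this last space has dimension $2$, hence $\dim \Lieq_{\Lie} \leq 2$ and $\dim(\Lien\otimes^c\Lieq_{\Lie})\leq 2^c$. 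Plugging the two estimates into the displayed inequality yields $\dim({\cal M}_{\Lie}^{(c)}(\Lieq)) + 1 \leq \dim({\cal M}_{\Lie}^{(c)}(\Lieq/Z_{\Lie}(\Lieq))) + 2^c$, which is exactly the asserted inequality. The only step that is not routine bookkeeping with the previously established exact sequences is the inclusion $\gamma_2^{\Lie}(\Lieq)\subseteq\Lieq^{\rm ann}$ together with its maximal-class consequence $\dim\Lieq_{\Lie}\leq 2$; beyond that I do not expect any serious obstacle.
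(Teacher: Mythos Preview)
Your proposal is correct and follows essentially the same route as the paper: set $\Lien=Z_{\Lie}(\Lieq)=\gamma_{c+1}^{\Lie}(\Lieq)$ using the preceding Proposition, apply Corollary~\ref{several properties}(g), and evaluate the two terms using $\dim Z_{\Lie}(\Lieq)=1$ and $\dim\Lieq_{\Lie}\leq 2$. Your only (harmless) divergence is that you prove $\gamma_2^{\Lie}(\Lieq)\subseteq\Lieq^{\rm ann}$ to get $\dim\Lieq_{\Lie}\leq 2$, whereas the paper simply asserts $\dim\Lieq_{\Lie}=\dim(\Lieq/\gamma_2^{\Lie}(\Lieq))=2$; in fact $\Lieq^{\rm ann}=\gamma_2^{\Lie}(\Lieq)$ here since $[x,x]=\tfrac12\bigl([x,x]+[x,x]\bigr)\in[\Lieq,\Lieq]_{\Lie}$ by the standing hypothesis $\tfrac12\in\mathbb{K}$, so either version works.
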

\begin{proof}
 Letting $\Lien:=Z_{\Lie}(\Lieq)$ in Corollary \ref{several properties} {\it (g)}, we have $${\rm dim} \left( {\cal M}_{\Lie}^{(c)} (\Lieq) \right)+{\rm dim} \left( Z_{\Lie}(\Lieq) \cap \gamma_{c+1}^{\Lie}(\Lieq) \right)\leq {\rm dim} \left({\cal M}_{\Lie}^{(c)} \left( \frac{\Lieq}{Z_{\Lie}(\Lieq)} \right) \right)+{\rm dim} \left( \Lien \otimes^c \Lieq_{\Lie}\right).$$
Note that since $\Lieq$ is $\Lie$-nilpotent  of maximal  $\Lie$-class $c+1,$ it follows that $Z_{\Lie}(\Lieq)={\ze}_1^{\Lie}({\Lieq}) =\gamma_{c+1}^{\Lie}(\Lieq),$  $\gamma_{c+2}^{\Lie}(\Lieq)=0$, then ${\rm dim} \left( Z_{\Lie}(\Lieq) \right) = {\rm dim} \left( \frac{ \gamma_{c+1}^{\Lie}(\Lieq)}{\gamma_{c+2}^{\Lie}(\Lieq)} \right) = 1$ and
 ${\rm dim} \left( \Lieq_{\Lie} \right)={\rm dim} \left( \frac{\Lieq}{\gamma_{2}^{\Lie}(\Lieq)} \right) = 2.$
 Therefore ${\rm dim} \left( Z_{\Lie}(\Lieq) \cap \gamma_{c+1}^{\Lie}(\Lieq) \right)={\rm dim} \left( Z_{\Lie}(\Lieq) \right) = 1$ and ${\rm dim} \left(  Z_{\Lie}(\Lieq) \otimes^c \Lieq_{\Lie}\right) = 2^c$. Then the result follows.
\end{proof}

\section{$c$-$\Lie$-stem covers} \label{stem}

 In this section we analyze the interplay between $c$-$\Lie$-stem covers and the $c$-nilpotent Schur $\Lie$-multiplier.

\begin{De}
A $c$-$\Lie$-central extension  $0 \to \Lien \to \Lieg \stackrel{\pi} \to \Lieq \to 0$ is said to be $c$-$\Lie$-stem  extension whenever $\Lien \subseteq \gamma_{c+1}^{\Lie}(\Lieg)$.

In addition, if $\Lien$ is isomorphic to ${\cal M}_{\Lie}^{(c)} (\Lieq)$, then the  $c$-$\Lie$-stem  extension  is called a $c$-$\Lie$-stem  cover of $\Lieq$. In this case $\Lieg$ is said to be a $c$-$\Lie$-covering of $\Lieq$.

A Leibniz algebra $\Lieq$ is said to be Hopfian is every surjective homomorphism $\Lieq \twoheadrightarrow \Lieq$ is an isomorphism.
\end{De}

\begin{Pro} \label{stem ext}
For a $c$-$\Lie$-central extension $\pi : \Lieg \twoheadrightarrow \Lieq$, with $\Lien = {\ker}(\pi)$, the following statements are equivalent:
\begin{enumerate}
\item[(a)] $\pi : \Lieg \twoheadrightarrow \Lieq$ is a $c$-$\Lie$-stem extension.
\item[(b)] The induced map $\Lien \to \frac{\Lieg}{\gamma_{c+1}^{\Lie}(\Lieg)}$ is the zero map.
\item[(c)] $\theta : {\cal M}_{\Lie}^{(c)} (\Lieq) \to \Lien$ is an epimorphism.
\item[(d)] The following sequence $\n \otimes^c \Lieg_{\Lie} \to {\cal M}_{\Lie}^{(c)} (\Lieg) \to {\cal M}_{\Lie}^{(c)} (\Lieq) \stackrel{\theta} \to \frak{n} \to 0$ is exact.
\item[(e)]$\frac{\Lieg}{\gamma_{c+1}^{\Lie}(\Lieg)} \cong \frac{\Lieq}{\gamma_{c+1}^{\Lie}(\Lieq)}$.
\end{enumerate}
\end{Pro}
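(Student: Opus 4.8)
The plan is to prove the cycle of implications $(a) \To (b) \To (c) \To (d) \To (e) \To (a)$, relying throughout on Proposition \ref{exact sequences} and Proposition \ref{inclusion}. Before starting, I would fix a free presentation $0 \to \Lier \to \Lief \stackrel{\rho}\to \Lieq \to 0$ and the ideal $\Lies = \ker(\pi \circ \rho) \supseteq \Lier$, so that $\Lien \cong \Lies/\Lier$, and recall the map $\theta : {\cal M}_{\Lie}^{(c)}(\Lieq) = \frac{\Lier \cap \gamma_{c+1}^{\Lie}(\Lief)}{\gamma_{c+1}^{\Lie}(\Lief,\Lier)} \to \Lien$ given by $\theta(x + \gamma_{c+1}^{\Lie}(\Lief,\Lier)) = \rho(x)$; its image is $\rho(\Lier \cap \gamma_{c+1}^{\Lie}(\Lief)) = \Lien \cap \gamma_{c+1}^{\Lie}(\Lieg)$ since $\rho$ maps $\gamma_{c+1}^{\Lie}(\Lief)$ onto $\gamma_{c+1}^{\Lie}(\Lieg)$ and $\rho(\Lier) = \Lien$.

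For $(a) \To (b)$: if $\Lien \subseteq \gamma_{c+1}^{\Lie}(\Lieg)$, then the composite $\Lien \hookrightarrow \Lieg \twoheadrightarrow \Lieg/\gamma_{c+1}^{\Lie}(\Lieg)$ is visibly zero. For $(b) \To (c)$: the map $\Lien \to \Lieg/\gamma_{c+1}^{\Lie}(\Lieg)$ being zero says $\Lien \subseteq \gamma_{c+1}^{\Lie}(\Lieg)$, hence $\Lien = \Lien \cap \gamma_{c+1}^{\Lie}(\Lieg) = \im(\theta)$, so $\theta$ is onto. For $(c) \To (d)$: the sequence ${\cal M}_{\Lie}^{(c)}(\Lieg) \to {\cal M}_{\Lie}^{(c)}(\Lieq) \stackrel{\theta}\to \Lien \to 0$ with $\Lien \otimes^c \Lieg_{\Lie}$ mapping onto $\ker\left({\cal M}_{\Lie}^{(c)}(\Lieg) \to {\cal M}_{\Lie}^{(c)}(\Lieq)\right)$ is exactly Proposition \ref{exact sequences} $(c)$ applied to the $c$-$\Lie$-central extension $0 \to \Lien \to \Lieg \to \Lieq \to 0$ (which is $c$-$\Lie$-central by hypothesis), once we identify $\Lien \cap \gamma_{c+1}^{\Lie}(\Lieg)$ with $\Lien$ using $(c)$; surjectivity of $\theta$ onto $\Lien$ upgrades the truncated sequence ending in $\Lien \cap \gamma_{c+1}^{\Lie}(\Lieg)$ to one ending in $\Lien \to 0$.

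For $(d) \To (e)$: exactness of the displayed sequence gives in particular that $\theta : {\cal M}_{\Lie}^{(c)}(\Lieq) \to \Lien$ is surjective, so $\Lien \subseteq \gamma_{c+1}^{\Lie}(\Lieg)$ as above; then $\pi$ induces $\Lieg/\gamma_{c+1}^{\Lie}(\Lieg) \twoheadrightarrow \Lieq/\gamma_{c+1}^{\Lie}(\Lieq)$ with kernel $(\Lien + \gamma_{c+1}^{\Lie}(\Lieg))/\gamma_{c+1}^{\Lie}(\Lieg) = \gamma_{c+1}^{\Lie}(\Lieg)/\gamma_{c+1}^{\Lie}(\Lieg) = 0$, so it is an isomorphism. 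Finally $(e) \To (a)$: if $\Lieg/\gamma_{c+1}^{\Lie}(\Lieg) \cong \Lieq/\gamma_{c+1}^{\Lie}(\Lieq)$ via the map induced by $\pi$, then the composite $\Lien \hookrightarrow \Lieg \twoheadrightarrow \Lieg/\gamma_{c+1}^{\Lie}(\Lieg) \cong \Lieq/\gamma_{c+1}^{\Lie}(\Lieq)$ agrees with $\Lien \hookrightarrow \Lieg \stackrel{\pi}\to \Lieq \twoheadrightarrow \Lieq/\gamma_{c+1}^{\Lie}(\Lieq)$, which is zero since $\Lien = \ker(\pi)$; as $\Lieg/\gamma_{c+1}^{\Lie}(\Lieg) \to \Lieq/\gamma_{c+1}^{\Lie}(\Lieq)$ is injective, the image of $\Lien$ in $\Lieg/\gamma_{c+1}^{\Lie}(\Lieg)$ is zero, i.e.\ $\Lien \subseteq \gamma_{c+1}^{\Lie}(\Lieg)$.

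The main obstacle I anticipate is the careful bookkeeping in $(c)\To(d)$: one must check that the four-term exact sequence of Proposition \ref{exact sequences} $(c)$ genuinely has the map ${\cal M}_{\Lie}^{(c)}(\Lieg) \to {\cal M}_{\Lie}^{(c)}(\Lieq)$ as its relevant arrow and that the connecting term $\Lien \cap \gamma_{c+1}^{\Lie}(\Lieg)$ is replaced by $\Lien$ precisely under hypothesis $(c)$ — equivalently, that the surjection $\theta$ onto $\Lien$ is compatible with the surjection onto $\Lien \cap \gamma_{c+1}^{\Lie}(\Lieg)$ coming from Proposition \ref{exact sequences}. Everything else reduces to the two standard facts that $\rho$ takes $\gamma_{c+1}^{\Lie}(\Lief)$ onto $\gamma_{c+1}^{\Lie}(\Lieg)$ and that a $c$-$\Lie$-central extension has $\Lien$ inside $\ze_c^{\Lie}(\Lieg)$ (Proposition \ref{inclusion}), together with elementary diagram chasing.
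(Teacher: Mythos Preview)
Your overall strategy matches the paper's: the equivalences among $(a)$--$(d)$ come from the exact sequences of Proposition~\ref{exact sequences} (specifically part $(c)$ applied to the $c$-$\Lie$-central extension $0 \to \Lien \to \Lieg \to \Lieq \to 0$), and $(a)\Leftrightarrow(e)$ is an elementary kernel computation; the paper packages the latter as a $3\times 3$ diagram but the content is the same as your direct argument.

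There is, however, a systematic bookkeeping slip in your setup that you should fix. The free presentation must be of $\Lieg$, not $\Lieq$: write $0 \to \Lier \to \Lief \xrightarrow{\rho} \Lieg \to 0$, so that $\Lies := \ker(\pi\circ\rho)\supseteq \Lier$ and $\Lien \cong \Lies/\Lier$. Then
\[
{\cal M}_{\Lie}^{(c)}(\Lieq) \;=\; \frac{\Lies \cap \gamma_{c+1}^{\Lie}(\Lief)}{\gamma_{c+1}^{\Lie}(\Lief,\Lies)},\qquad
\theta\bigl(x+\gamma_{c+1}^{\Lie}(\Lief,\Lies)\bigr) = \rho(x),
\]
and $\mathrm{Im}(\theta)=\rho\bigl(\Lies\cap\gamma_{c+1}^{\Lie}(\Lief)\bigr)=\Lien\cap\gamma_{c+1}^{\Lie}(\Lieg)$ because $\ker\rho=\Lier\subseteq\Lies$ and $\rho(\Lies)=\Lien$. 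As written, your display has $\Lier$ where $\Lies$ is needed (your formula is ${\cal M}_{\Lie}^{(c)}(\Lieg)$, not ${\cal M}_{\Lie}^{(c)}(\Lieq)$), and the claim ``$\rho(\Lier)=\Lien$'' is false since $\Lier=\ker\rho$. Once the letters are corrected, every implication in your cycle goes through exactly as you describe, including the point you flag in $(c)\Rightarrow(d)$: the last term $\Lien\cap\gamma_{c+1}^{\Lie}(\Lieg)$ in Proposition~\ref{exact sequences}$(c)$ coincides with $\Lien$ precisely when $\theta$ is onto, and the map onto it is $\theta$ itself.
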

\begin{proof} The equivalences between {\it (a), (b), (c)} and {\it (d)} follow from exact sequences in Proposition \ref{exact sequences}. The equivalence between {\it (a)} and {\it (e)} is a consequence of the following $3 \times 3$ diagram:
\[ \xymatrix{
 & 0 \ar[d] & 0 \ar[d] & 0 \ar[d] & \\
0 \ar[r] & \n \cap \gamma_{c+1}^{\Lie}(\Lieg) \ar[r] \ar[d] & \gamma_{c+1}^{\Lie}(\Lieg) \ar[r] \ar[d] & \gamma_{c+1}^{\Lie}(\Lieq) \ar[r] \ar[d] & 0\\
0 \ar[r] & \n \ar[r] \ar[d] & \g \ar[r] \ar[d] & \q \ar[r] \ar[d] & 0\\
0 \ar[r] & 0 \ar[r] \ar[d] & \frac{\Lieg}{\gamma_{c+1}^{\Lie}(\Lieg)} \ar@{=}[r] \ar[d] &\frac{\Lieq}{\gamma_{c+1}^{\Lie}(\Lieq)} \ar[r] \ar[d] & 0\\
 & 0  & 0 & 0  &
} \]
\end{proof}

\begin{Pro} \label{stem cover}
For a  $c$-$\Lie$-central extension $\pi : \Lieg \twoheadrightarrow \Lieq$  the following statements are equivalent:
\begin{enumerate}
\item[(a)] $\pi : \Lieg \twoheadrightarrow \Lieq$  is a $c$-$\Lie$-stem cover.
\item[(b)] $\frac{\Lieg}{\gamma_{c+1}^{\Lie}(\Lieg)} \cong \frac{\Lieq}{\gamma_{c+1}^{\Lie}(\Lieq)}$ and the induced map ${\cal M}_{\Lie}^{(c)} (\Lieg) \to {\cal M}_{\Lie}^{(c)} (\Lieq)$  is the zero map.
\end{enumerate}
\end{Pro}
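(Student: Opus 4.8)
The plan is to obtain this as an essentially formal consequence of Proposition \ref{stem ext}, whose main tool is the four-term exact sequence
\[
\Lien \otimes^c \Lieg_{\Lie} \longrightarrow {\cal M}_{\Lie}^{(c)}(\Lieg) \stackrel{\psi}{\longrightarrow} {\cal M}_{\Lie}^{(c)}(\Lieq) \stackrel{\theta}{\longrightarrow} \Lien \longrightarrow 0
\]
available for every $c$-$\Lie$-stem extension, in which $\psi$ is the homomorphism induced by $\pi$ via the functoriality of the $c$-nilpotent Schur $\Lie$-multiplier and $\theta$ is the connecting homomorphism. The whole argument then consists of reading exactness of this sequence both ways, using that for a stem extension $\theta$ is automatically an epimorphism.

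For the implication (a) $\To$ (b): if $\pi$ is a $c$-$\Lie$-stem cover it is in particular a $c$-$\Lie$-stem extension, so by the equivalence (a) $\Leftrightarrow$ (e) of Proposition \ref{stem ext} the canonical map $\frac{\Lieg}{\gamma_{c+1}^{\Lie}(\Lieg)} \to \frac{\Lieq}{\gamma_{c+1}^{\Lie}(\Lieq)}$ is an isomorphism, which is the first half of (b). By (a) $\Leftrightarrow$ (c) the map $\theta$ is surjective, and since by definition of stem cover $\Lien \cong {\cal M}_{\Lie}^{(c)}(\Lieq)$, a comparison of dimensions (in the finite-dimensional setting; a Hopfian argument in general) upgrades $\theta$ to an isomorphism. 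Exactness of the displayed sequence at ${\cal M}_{\Lie}^{(c)}(\Lieq)$ then forces $\im \psi = \ker \theta = 0$, i.e.\ $\psi = 0$, which is the second half of (b). Conversely, for (b) $\To$ (a): the isomorphism $\frac{\Lieg}{\gamma_{c+1}^{\Lie}(\Lieg)} \cong \frac{\Lieq}{\gamma_{c+1}^{\Lie}(\Lieq)}$ together with Proposition \ref{stem ext} (e) $\To$ (a) shows that $\pi$ is a $c$-$\Lie$-stem extension, hence by (c) that $\theta$ is an epimorphism and that the displayed exact sequence holds; feeding in the hypothesis $\psi = 0$, exactness yields $\ker \theta = \im \psi = 0$, so $\theta$ is also a monomorphism and therefore an isomorphism ${\cal M}_{\Lie}^{(c)}(\Lieq) \stackrel{\sim}{\to} \Lien$. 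Thus $\Lien \cong {\cal M}_{\Lie}^{(c)}(\Lieq)$ and $\pi$ is a $c$-$\Lie$-stem cover.

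I do not expect a genuine obstacle here: the statement is close to a corollary of Proposition \ref{stem ext}. The only point requiring a little care is the passage, inside (a) $\To$ (b), from the abstract isomorphism $\Lien \cong {\cal M}_{\Lie}^{(c)}(\Lieq)$ together with surjectivity of $\theta$ to the conclusion that $\theta$ itself is an isomorphism; for finite-dimensional Leibniz algebras this is immediate by counting dimensions, which is precisely the setting in which the later existence results are applied. One should also check that the isomorphism referred to in (b) is understood to be the canonical one induced by $\pi$, so that it matches the map appearing in the $3 \times 3$ diagram used to prove Proposition \ref{stem ext}.
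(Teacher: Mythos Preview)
Your proposal is correct and follows essentially the same route as the paper's own proof, which is the single sentence ``This is a direct consequence of Proposition \ref{stem ext} {\it (e)} and Proposition \ref{exact sequences}'': you combine the equivalence (a)$\Leftrightarrow$(e) of Proposition \ref{stem ext} with the four-term exact sequence (coming from Proposition \ref{exact sequences} (c), restated as Proposition \ref{stem ext} (d)) and read off both implications from exactness. Your caveat about upgrading the surjection $\theta$ to an isomorphism in the direction (a)$\Rightarrow$(b) via a dimension or Hopfian argument is well taken and is glossed over in the paper's terse proof; the later applications are all in the finite-dimensional setting, where this causes no trouble.
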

\begin{proof} This is a direct consequence of Proposition \ref{stem ext} {\it (e)} and Proposition \ref{exact sequences}.
\end{proof}

\begin{Ex}\ \label{Example3.14}
\begin{enumerate}
\item[(a)]  The $2$-$\Lie$-central extension given in Example \ref{example Lie stem} {\it (b)} is  a $2$-$\Lie$-stem extension, but the $2$-$\Lie$-central  in Example \ref{example Lie stem} {\it (a)} is not a $c$-$\Lie$-stem extension.

\item[(b)] Let $0 \to \Lier \to \Lief \stackrel{\rho} \to \Lieq \to 0$ be a free presentation of a Leibniz algebra $\Lieq$. Then $\gamma_{c+1}^{\Lie}(\Lief, \Lier)$ is a two-sided ideal of $\Lief$, $\gamma_{c+1}^{\Lie}(\Lief, \Lier) \subseteq \Lier$ and the sequence
    \begin{equation}\label{example-Lie-central}
 0 \longrightarrow \frac{\Lier}{\gamma_{c+1}^{\Lie}(\Lief, \Lier)} \longrightarrow \frac{\Lief}{\gamma_{c+1}^{\Lie}(\Lief, \Lier)} \stackrel{\overline{\rho}} \longrightarrow \Lieq \longrightarrow 0
    \end{equation}
      is a $c$-$\Lie$-central extension (see Example \ref{example exact sequence} {\it (b)}).

 It has the property that the induced map ${\cal M}_{\Lie}^{(c)}\left(\Lief/\gamma_{c+1}^{\Lie}(\Lief, \Lier) \right) \to  {\cal M}_{\Lie}^{(c)}(\Lieq)$ is the zero map. This can be readily checked by using the isomorphism (\ref{Schur formula}) for the given free presentation of $\Lieq$ and the free presentation $0 \to \gamma_{c+1}^{\Lie}(\Lief, \Lier) \to \Lief  \to \Lief/\gamma_{c+1}^{\Lie}(\Lief, \Lier) \to 0$ of the Leibniz algebra $\Lief/\gamma_{c+1}^{\Lie}(\Lief, \Lier)$. Moreover, we have the short exact sequence (c.f. the last row of $3\times 3$ diagram in the proof of Proposition \ref{stem ext})
     \begin{equation}\label{kernel-h1}
    0\longrightarrow \frac{\Lier}{\Lier\cap \gamma_{c+1}^{\Lie}(\Lief)}\longrightarrow \frac{\Lief}{\gamma_{c+1}^{\Lie}(\Lief)} \longrightarrow \frac{\Lieq}{\gamma_{c+1}^{\Lie}(\Lieq)}\longrightarrow 0.
     \end{equation}

\item[(c)] As a particular case of {\it (b)}, consider the Leibniz  algebra $\Lieq=\Lief/\gamma_{c+1}^{\Lie}(\Lief)$ for a free Leibniz algebra $\Lief$. Then $\Lier=\gamma_{c+1}^{\Lie}(\Lief)$ and the sequence
(\ref{example-Lie-central}) turns to
\[
0 \longrightarrow \frac{\gamma_{c+1}^{\Lie}(\Lief)}{\gamma_{c+1}^{\Lie}(\Lief, \Lier)} \longrightarrow \frac{\Lief}{\gamma_{c+1}^{\Lie}(\Lief, \Lier)} \longrightarrow \frac{\Lief}{\gamma_{c+1}^{\Lie}(\Lief)} \longrightarrow 0,
\]
which, by (\ref{kernel-h1}), is a $c$-$\Lie$-stem cover of the Leibniz algebra  $\Lief/\gamma_{c+1}^{\Lie}(\Lief)$.

\end{enumerate}
\end{Ex}

\begin{Le} \label{equivalence property}
Let $0 \to \Lier \to \Lief \overset{\rho} \to \Lieq \to 0$ be a free presentation of a Leibniz algebra $\Lieq$. Then the extension $0 \to  \Liem \to \Lieq^{\ast} \overset{\psi} \to \Lieq \to 0$ is a $c$-\Lie-stem cover  of $\Lieq$ if and only if there exists a two-sided ideal $\Lies$ of $\Lief$ such that
\begin{enumerate}
\item[(a)] $\Lieq^{\ast} \cong \Lief/\Lies$ and $\Liem \cong \Lier/\Lies$;

\item[(b)] $\Lier/\gamma_{c+1}^{\Lie}(\Lief, \Lier) \cong {\cal M}_{\Lie}^{(c)} (\Lieq) \oplus \Lies/\gamma_{c+1}^{\Lie}(\Lief, \Lier)$.
\end{enumerate}
\end{Le}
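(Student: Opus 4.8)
The plan is to establish the two implications separately: in the forward direction the ideal $\Lies$ will be produced as the kernel of a lift of $\rho$ along $\psi$, and conditions (a), (b) will be read off; in the converse direction (a) and (b) will be shown to force centrality, the stem property and the covering condition. Throughout I would use freely the exact sequences of Proposition \ref{exact sequences}, the characterisations in Proposition \ref{stem ext}, Proposition \ref{inclusion}, and the commutator calculus of \cite{CKh}.

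For the forward direction, assume $0 \to \Liem \to \Lieq^{\ast} \overset{\psi}\to \Lieq \to 0$ is a $c$-$\Lie$-stem cover. Since $\Lief$ is free and $\psi$ is surjective, I would first choose (as in the proof of Lemma \ref{lema 1}) a homomorphism $\phi : \Lief \to \Lieq^{\ast}$ with $\psi \comp \phi = \rho$. Then $\phi^{-1}(\Liem) = \ker(\psi \comp \phi) = \Lier$, so setting $\Lies := \ker(\phi)$ gives $\Lies \subseteq \Lier$, with $\phi(\Lief) \cong \Lief/\Lies$ and $\phi(\Lier) \cong \Lier/\Lies$. The key step is that $\phi$ is surjective. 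Here I would combine $\Lieq^{\ast} = \phi(\Lief) + \Liem$ (because $\psi\comp\phi$ is onto), $\Liem \subseteq \gamma_{c+1}^{\Lie}(\Lieq^{\ast})$ (stem property), and $\gamma_{c+1}^{\Lie}(\Lieq^{\ast}, \Liem) = 0$ ($c$-$\Lie$-centrality): by moving the $\Liem$-entries inward in an iterated $\Lie$-bracket one gets $\gamma_{c+1}^{\Lie}(A + \Liem) \subseteq \gamma_{c+1}^{\Lie}(A) + \gamma_{c+1}^{\Lie}(\Lieq^{\ast}, \Liem)$ for the subalgebra $A = \phi(\Lief)$, hence $\gamma_{c+1}^{\Lie}(\Lieq^{\ast}) = \gamma_{c+1}^{\Lie}(\phi(\Lief)) = \phi(\gamma_{c+1}^{\Lie}(\Lief))$; therefore $\Liem \subseteq \gamma_{c+1}^{\Lie}(\Lieq^{\ast}) \subseteq \phi(\Lief)$ and $\Lieq^{\ast} = \phi(\Lief)$. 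This yields $\Lieq^{\ast} \cong \Lief/\Lies$ and $\Liem \cong \Lier/\Lies$, i.e. (a); and, since $\phi$ is onto, $\phi(\gamma_{c+1}^{\Lie}(\Lief, \Lier)) = \gamma_{c+1}^{\Lie}(\Lieq^{\ast}, \Liem) = 0$, so also $\gamma_{c+1}^{\Lie}(\Lief, \Lier) \subseteq \Lies$.

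To obtain (b) I would show that, inside $\Lier/\gamma_{c+1}^{\Lie}(\Lief, \Lier)$, the canonical subspaces ${\cal M}_{\Lie}^{(c)}(\Lieq) = \frac{\Lier \cap \gamma_{c+1}^{\Lie}(\Lief)}{\gamma_{c+1}^{\Lie}(\Lief, \Lier)}$ and $\Lies/\gamma_{c+1}^{\Lie}(\Lief, \Lier)$ are complementary. For the sum being the whole space one needs $(\Lier \cap \gamma_{c+1}^{\Lie}(\Lief)) + \Lies = \Lier$: for $r \in \Lier$, $\phi(r) \in \Liem \subseteq \gamma_{c+1}^{\Lie}(\Lieq^{\ast}) = \phi(\gamma_{c+1}^{\Lie}(\Lief))$ produces $y \in \gamma_{c+1}^{\Lie}(\Lief)$ with $r - y \in \Lies \subseteq \Lier$, so $y \in \Lier \cap \gamma_{c+1}^{\Lie}(\Lief)$ and $r = y + (r-y)$. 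For trivial intersection, observe that the composite $\frac{\Lier \cap \gamma_{c+1}^{\Lie}(\Lief)}{\gamma_{c+1}^{\Lie}(\Lief, \Lier)} \hookrightarrow \Lier/\gamma_{c+1}^{\Lie}(\Lief, \Lier) \twoheadrightarrow \Lier/\Lies \cong \Liem$ is precisely the connecting map $\theta : {\cal M}_{\Lie}^{(c)}(\Lieq) \to \Liem$, which is an epimorphism by Proposition \ref{stem ext} and, since the extension is a stem cover, an isomorphism; its kernel being zero is exactly $(\Lier \cap \gamma_{c+1}^{\Lie}(\Lief)) \cap \Lies = \gamma_{c+1}^{\Lie}(\Lief, \Lier)$. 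Hence $\Lier/\gamma_{c+1}^{\Lie}(\Lief, \Lier) = {\cal M}_{\Lie}^{(c)}(\Lieq) \oplus \Lies/\gamma_{c+1}^{\Lie}(\Lief, \Lier)$.

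Conversely, given a two-sided ideal $\Lies \trianglelefteq \Lief$ satisfying (a) and (b), I would identify the extension with the canonical one $0 \to \Lier/\Lies \to \Lief/\Lies \overset{\overline{\rho}}\to \Lief/\Lier \to 0$ (legitimate since (a) forces $\Lies \subseteq \Lier$). Condition (b) presupposes $\gamma_{c+1}^{\Lie}(\Lief, \Lier) \subseteq \Lies$, so $\gamma_{c+1}^{\Lie}(\Lief/\Lies, \Lier/\Lies)$, being the image of $\gamma_{c+1}^{\Lie}(\Lief, \Lier)$ in $\Lief/\Lies$, is zero and the extension is $c$-$\Lie$-central. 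Reading (b) as an internal direct sum of those canonical subspaces gives $\Lier = (\Lier \cap \gamma_{c+1}^{\Lie}(\Lief)) + \Lies \subseteq \gamma_{c+1}^{\Lie}(\Lief) + \Lies$, hence $\Lier/\Lies \subseteq \gamma_{c+1}^{\Lie}(\Lief/\Lies)$, i.e. the extension is a $c$-$\Lie$-stem extension; and quotienting the isomorphism in (b) by $\Lies/\gamma_{c+1}^{\Lie}(\Lief, \Lier)$ yields $\Liem \cong \Lier/\Lies \cong {\cal M}_{\Lie}^{(c)}(\Lieq)$, so it is a stem cover. The main obstacle is the surjectivity of $\phi$, that is the commutator identity $\gamma_{c+1}^{\Lie}(A + \Liem) = \gamma_{c+1}^{\Lie}(A)$ for a two-sided ideal $\Liem$ with $\gamma_{c+1}^{\Lie}(\Lieq^{\ast}, \Liem) = 0$, which rests on a $\Lie$-version of the three-subalgebras lemma, $[[\Liea,\Lieb]_{\Lie},\Liec]_{\Lie} \subseteq [[\Liea,\Liec]_{\Lie},\Lieb]_{\Lie} + [\Liea,[\Lieb,\Liec]_{\Lie}]_{\Lie}$ for two-sided ideals, to be extracted from the commutator calculus of \cite{CKh}; a secondary point is the upgrade of $\theta$ from epic to an isomorphism, which is immediate when $\Lieq$ is finite-dimensional and is in any case what the covering hypothesis encodes.
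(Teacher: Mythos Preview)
Your argument is correct and follows essentially the same route as the paper's. The paper produces the lift via Lemma~\ref{lema 1}, so it works with the induced map $\beta:\Lief/\gamma_{c+1}^{\Lie}(\Lief,\Lier)\to\Lieq^{\ast}$ rather than with $\phi:\Lief\to\Lieq^{\ast}$ directly, and takes $\Lies$ so that $\ker(\beta)=\Lies/\gamma_{c+1}^{\Lie}(\Lief,\Lier)$; this is equivalent to your choice. For surjectivity the paper simply writes $\Liem\subseteq\gamma_{c+1}^{\Lie}(\Lieq^{\ast})=\gamma_{c+1}^{\Lie}(\im(\beta))\subseteq\im(\beta)$, so your commutator expansion is precisely the content hidden in that middle equality.

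The only substantive difference is in how (b) is obtained in the forward direction. The paper does not argue that the canonical subspace ${\cal M}_{\Lie}^{(c)}(\Lieq)=(\Lier\cap\gamma_{c+1}^{\Lie}(\Lief))/\gamma_{c+1}^{\Lie}(\Lief,\Lier)$ and $\Lies/\gamma_{c+1}^{\Lie}(\Lief,\Lier)$ are complementary; instead it splits the short exact sequence of $\mathbb{K}$-vector spaces $0\to\Lies/\gamma_{c+1}^{\Lie}(\Lief,\Lier)\to\Lier/\gamma_{c+1}^{\Lie}(\Lief,\Lier)\to\Liem\to 0$ and then replaces $\Liem$ by ${\cal M}_{\Lie}^{(c)}(\Lieq)$ using the covering hypothesis. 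This sidesteps your ``secondary point'' about upgrading $\theta$ from an epimorphism to an isomorphism, and works without any finite-dimensionality assumption. On the other hand, your internal direct sum reading of (b) is exactly what is used in the converse direction (to get $\Lier\subseteq\Lies+\gamma_{c+1}^{\Lie}(\Lief)$) and in the later applications (Corollary~\ref{existence}, Theorems~\ref{epimorphism1} and beyond), so your formulation is the operative one, and your explicit verification of $c$-$\Lie$-centrality in the converse fills in a step the paper leaves implicit.
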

\begin{proof}
Let $0 \to  \Liem \to \Lieq^{\ast} \overset{\psi} \to \Lieq \to 0$ be a  $c$-\Lie-stem cover  of $\Lieq.$ Then by Lemma 3.6, the identity map $\Lieq\to\Lieq$ induces  a homomorphism  $\beta : \frac{\Lief}{\gamma_{c+1}^{\Lie}(\Lief, \Lier)} \to \Lieq^{\ast}$ such that $\beta \left( \frac{\Lier}{\gamma_{c+1}^{\Lie}(\Lief, \Lier)} \right) \subseteq \Liem$ and $\psi \circ \beta=\bar{\rho}$ (see diagram (\ref{diagram lemma})). Since $\Lieq^{\ast} = \im(\beta) + \Liem$ and $\Liem \subseteq \gamma_{c+1}^{\Lie}(\Lieq^{\ast}) = \gamma_{c+1}^{\Lie}(\im(\beta)) \subseteq \im(\beta)$, hence $\beta$ is a surjective homomorphism and $\beta \left( \frac{\Lier}{\gamma_{c+1}^{\Lie}(\Lief, \Lier)} \right) = \Liem$.

Now, let $\Lies$ be a two-sided ideal of $\Lief$ such that $\ker(\beta)= \frac{\Lies}{\gamma_{c+1}^{\Lie}(\Lief, \Lier)}.$ Then we have the  exact sequence $0\to \frac{\Lies}{\gamma_{c+1}^{\Lie}(\Lief, \Lier)}\to  \frac{\Lief}{\gamma_{c+1}^{\Lie}(\Lief, \Lier)}\overset{\beta}\to \Lieq^{\ast}  \to 0$ which induces the short exact sequence $ 0\to \frac{\Lies}{\gamma_{c+1}^{\Lie}(\Lief, \Lier)} \to \frac{\Lier}{\gamma_{c+1}^{\Lie}(\Lief, \Lier)}\overset{\beta_{\mid \ }} \to \Liem \to 0.$ It follows from these two exact sequences and the third isomorphism theorem that $\Lieq^{\ast}\cong\frac{\Lief}{\gamma_{c+1}^{\Lie}(\Lief, \Lier)}/\frac{\Lies}{\gamma_{c+1}^{\Lie}(\Lief, \Lier)}\cong \Lief/\Lies$ and  $\Liem\cong\frac{\Lier}{\gamma_{c+1}^{\Lie}(\Lief, \Lier)}/\frac{\Lies}{\gamma_{c+1}^{\Lie}(\Lief, \Lier)}\cong\Lier/\Lies.$ Moreover,  $\frac{\Lier}{\gamma_{c+1}^{\Lie}(\Lief, \Lier)}\cong \Liem\oplus\frac{\Lies}{\gamma_{c+1}^{\Lie}(\Lief, \Lier)}$ as $\mathbb{K}$-vector spaces, and thus $\frac{\Lier}{\gamma_{c+1}^{\Lie}(\Lief, \Lier)}\cong  {\cal M}_{\Lie}^{(c)} (\Lieq)\oplus\frac{\Lies}{\gamma_{c+1}^{\Lie}(\Lief, \Lier)}, $ since $0 \to  \Liem \to \Lieq^{\ast} \overset{\psi} \to \Lieq \to 0$ is a  $c$-$\Lie$-stem cover  of $\Lieq.$

Conversely, suppose the existence of a two-sided ideal $\Lies$ of $\Lief$ satisfying {\it (a)} and {\it (b)}. Then, $\frac{\Lieq^{\ast}}{\Liem} \cong \frac{\Lief}{\Lies}/\frac{\Lier}{\Lies}\cong \Lief/\Lier\cong\Lieq,$ and   ${\cal M}_{\Lie}^{(c)} (\Lieq)\cong\frac{\Lier}{\gamma_{c+1}^{\Lie}(\Lief, \Lier)}/\frac{\Lies}{\gamma_{c+1}^{\Lie}(\Lief, \Lier)}\cong\Lier/\Lies\cong\Liem.$ Moreover $\Liem\cong\Lier/\Lies\subseteq\frac{\Lies+\gamma_{c+1}^{\Lie}(\Lief)}{\Lies}\cong\frac{\gamma_{c+1}^{\Lie}(\Lief)}{\Lies\cap\gamma_{c+1}^{\Lie}(\Lief)}\subseteq \gamma_{c+1}^{\Lie}(\Lief/ \Lies)  \cong\gamma_{c+1}^{\Lie}(\Lieq^{\ast}). $ Therefore the extension $0 \to  \Liem \to \Lieq^{\ast} \overset{\psi} \to \Lieq \to 0$ is a $c$-\Lie-stem cover  of $\Lieq.$
\end{proof}

\begin{Co} \label{existence}
Any finite-dimensional Leibniz algebra has at least one $c$-$\Lie$-covering.
\end{Co}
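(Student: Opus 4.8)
The plan is to deduce this from Lemma \ref{equivalence property}. Fix a free presentation $0 \to \Lier \to \Lief \stackrel{\rho}\to \Lieq \to 0$ of the finite-dimensional Leibniz algebra $\Lieq$. By Corollary \ref{several properties}{\it (a)} the underlying vector space of ${\cal M}_{\Lie}^{(c)}(\Lieq) = \frac{\Lier \cap \gamma_{c+1}^{\Lie}(\Lief)}{\gamma_{c+1}^{\Lie}(\Lief,\Lier)}$ is finite-dimensional. By Lemma \ref{equivalence property} it suffices to produce a two-sided ideal $\Lies$ of $\Lief$ with $\gamma_{c+1}^{\Lie}(\Lief,\Lier) \subseteq \Lies \subseteq \Lier$ such that $\Lies/\gamma_{c+1}^{\Lie}(\Lief,\Lier)$ is a vector-space complement of ${\cal M}_{\Lie}^{(c)}(\Lieq)$ in $\Lier/\gamma_{c+1}^{\Lie}(\Lief,\Lier)$; for such an $\Lies$ the extension $0 \to \Lier/\Lies \to \Lief/\Lies \to \Lieq \to 0$ is a $c$-$\Lie$-stem cover, so $\Lief/\Lies$ is a $c$-$\Lie$-covering of $\Lieq$ (and it is finite-dimensional, being an extension of $\Lieq$ by ${\cal M}_{\Lie}^{(c)}(\Lieq)$).

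A finite-dimensional subspace always admits a vector-space complement, so the only real point is to arrange that the preimage in $\Lier$ of the complement is a two-sided ideal of $\Lief$. For this I would use the two-sided ideal $\Liet := [\Lief,\Lier] + [\Lier,\Lief]$ of $\Lief$ (it is an ideal by the Leibniz identity). It satisfies $\gamma_{c+1}^{\Lie}(\Lief,\Lier) \subseteq \gamma_2^{\Lie}(\Lief,\Lier) = [\Lier,\Lief]_{\Lie} \subseteq \Liet \subseteq \Lier$, and modulo $\Liet$ the image of $\Lier$ lies in the (Leibniz) centre of $\Lief/\Liet$, whence every subspace of $\Lier$ that contains $\Liet$ is automatically a two-sided ideal of $\Lief$. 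It is therefore enough to choose the vector-space complement $\Lies/\gamma_{c+1}^{\Lie}(\Lief,\Lier)$ of ${\cal M}_{\Lie}^{(c)}(\Lieq)$ so that it contains $\Liet/\gamma_{c+1}^{\Lie}(\Lief,\Lier)$, and such a choice exists precisely when the latter meets ${\cal M}_{\Lie}^{(c)}(\Lieq)$ trivially, that is, when
\[
\bigl([\Lief,\Lier] + [\Lier,\Lief]\bigr) \cap \gamma_{c+1}^{\Lie}(\Lief) = \gamma_{c+1}^{\Lie}(\Lief,\Lier).
\]

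The inclusion $\supseteq$ is immediate; proving $\subseteq$ is the step I expect to be the main obstacle. This identity is the Leibniz/$\Lie$ analogue of the classical fact $[R,F] \cap \gamma_{c+1}(F) = [R,{}_cF]$ for the lower central series of a free group or free Lie algebra, and its proof must exploit the freeness of $\Lief$: morally, $\gamma_{c+1}^{\Lie}(\Lief)$ is built from iterated symmetrized brackets, so an element of $[\Lief,\Lier] + [\Lier,\Lief]$ that lies in it is forced to be assembled from symmetrized brackets involving $\Lier$, i.e. to lie in $\gamma_{c+1}^{\Lie}(\Lief,\Lier)$. I would make this precise by induction on $c$, working in the quotients $\Lief/\gamma_{i}^{\Lie}(\Lief)$ and tracking where $\Lie$-brackets of elements of $\Lier$ with $\Lief$ land in the $\Lie$-lower central series. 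Granting the displayed identity, extend a basis of $\Liet/\gamma_{c+1}^{\Lie}(\Lief,\Lier)$ to a vector-space complement $\Lies/\gamma_{c+1}^{\Lie}(\Lief,\Lier)$ of ${\cal M}_{\Lie}^{(c)}(\Lieq)$ in $\Lier/\gamma_{c+1}^{\Lie}(\Lief,\Lier)$; then $\Lies$ is a two-sided ideal of $\Lief$ of the required form, and Lemma \ref{equivalence property} completes the argument.
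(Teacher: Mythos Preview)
Your overall strategy matches the paper's: fix a free presentation and invoke Lemma~\ref{equivalence property} with a two-sided ideal $\Lies \subseteq \Lier$ whose image complements ${\cal M}_{\Lie}^{(c)}(\Lieq)$ in $\Lier/\gamma_{c+1}^{\Lie}(\Lief,\Lier)$. The paper's own proof is two sentences: it simply asserts that such a two-sided ideal can be chosen and then applies Lemma~\ref{equivalence property}. You rightly notice that an arbitrary vector-space complement need not pull back to a two-sided ideal of $\Lief$, and you try to close this by forcing $\Lies$ to contain $\Liet = [\Lief,\Lier]+[\Lier,\Lief]$.

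This is where your proposal has a genuine gap. The identity you isolate,
\[
\bigl([\Lief,\Lier]+[\Lier,\Lief]\bigr)\cap\gamma_{c+1}^{\Lie}(\Lief)=\gamma_{c+1}^{\Lie}(\Lief,\Lier),
\]
is neither proved nor, as far as I can see, obviously true; you yourself flag it as ``the main obstacle'' and offer only a vague inductive plan. The ``classical fact'' $[R,F]\cap\gamma_{c+1}(F)=[R,{}_cF]$ that you cite as its group-theoretic analogue is not a standard identity in the form you state it, and there is no reason to expect the combinatorics of free objects to transfer cleanly to the $\Lie$-commutator setting, where the symmetrized bracket $[-,-]_{\Lie}$ and the ordinary Leibniz brackets interact nontrivially. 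Note also that failure of your identity would only rule out complements containing your particular $\Liet$; it would not preclude the existence of \emph{some} ideal complement. So you have committed to a specific sufficient condition and then left that condition unproved, whereas the paper simply asserts the conclusion you are trying to reach.
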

\begin{proof}
Let $\Lieq$ be a finite dimensional Leibniz algebra, and let  $0 \to \Lier \to \Lief \overset{\rho} \to \Lieq \to 0$ be a free presentation of $\Lieq$. Following the proof of Lemma \ref{equivalence property}, choose a two-sided ideal $\Lies$ of $\Lief$ such that $\frac{\Lies}{\gamma_{c+1}^{\Lie}(\Lief, \Lier)}$ is the complement of $ {\cal M}_{\Lie}^{(c)} (\Lieq) $ in $\frac{\Lier}{\gamma_{c+1}^{\Lie}(\Lief, \Lier)}.$ Then the extension $0\to\Lier/ \Lies\to\Lief/ \Lies\to\Lieq\to0$ is a  $c$-\Lie-stem cover  of $\Lieq.$
\end{proof}

\begin{Th}
Let $0 \to  \Liem \to \Lieh \overset{\theta} \to \Lieq \to 0$ be a $c$-$\Lie$-stem extension of a finite-dimensional Leibniz algebra $\Lieq$. Then there exists a $c$-$\Lie$-covering $\Lieq^{\ast}$ of $\Lieq$ such that $\Lieh$ is a quotient of $\Lieq^{\ast}$.
\end{Th}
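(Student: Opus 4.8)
The plan is to realise both the given stem extension and a suitably chosen stem cover as quotients of the Leibniz algebra $\Lief/\gamma_{c+1}^{\Lie}(\Lief,\Lier)$ attached to a free presentation of $\Lieq$, and then compare the two kernels. Fix a free presentation $0 \to \Lier \to \Lief \stackrel{\rho}\to \Lieq \to 0$ of $\Lieq$. Applying Lemma \ref{lema 1} to $\alpha=\mathrm{id}_{\Lieq}$ and the $c$-$\Lie$-central extension $0 \to \Liem \to \Lieh \stackrel{\theta}\to \Lieq \to 0$, I obtain a homomorphism $\beta : \Lief/\gamma_{c+1}^{\Lie}(\Lief,\Lier) \to \Lieh$ with $\theta\circ\beta=\overline{\rho}$ and $\beta\big(\Lier/\gamma_{c+1}^{\Lie}(\Lief,\Lier)\big)\subseteq\Liem$. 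Arguing exactly as in the first paragraph of the proof of Lemma \ref{equivalence property} — using here that the extension is a \emph{stem} extension, i.e. $\Liem\subseteq\gamma_{c+1}^{\Lie}(\Lieh)$ — one gets that $\beta$ is surjective. Writing $\ker(\beta)=\Lies/\gamma_{c+1}^{\Lie}(\Lief,\Lier)$ for a two-sided ideal $\Lies$ of $\Lief$, we have $\gamma_{c+1}^{\Lie}(\Lief,\Lier)\subseteq\Lies\subseteq\Lier$ (the second inclusion because $\ker(\beta)\subseteq\ker(\overline{\rho})$), together with $\Lieh\cong\Lief/\Lies$ and $\Liem\cong\Lier/\Lies$.

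Next I record the decomposition
\[
\frac{\Lier}{\gamma_{c+1}^{\Lie}(\Lief,\Lier)} \;=\; {\cal M}_{\Lie}^{(c)}(\Lieq)\;+\;\frac{\Lies}{\gamma_{c+1}^{\Lie}(\Lief,\Lier)}\,.
\]
Indeed, $\beta$ restricts to a surjection $\Lier/\gamma_{c+1}^{\Lie}(\Lief,\Lier)\twoheadrightarrow\Liem$ with kernel $\Lies/\gamma_{c+1}^{\Lie}(\Lief,\Lier)$; moreover, since $\gamma_{c+1}^{\Lie}(\Lief,\Lier)\subseteq\gamma_{c+1}^{\Lie}(\Lief)$ (monotonicity of $\gamma_{c+1}^{\Lie}(\Lief,-)$) and $\beta$ is onto, $\beta$ maps $\gamma_{c+1}^{\Lie}(\Lief)/\gamma_{c+1}^{\Lie}(\Lief,\Lier)$ onto $\gamma_{c+1}^{\Lie}(\Lieh)$; the stem hypothesis $\Liem\subseteq\gamma_{c+1}^{\Lie}(\Lieh)$ then shows that for every $\overline{r}\in\Lier/\gamma_{c+1}^{\Lie}(\Lief,\Lier)$ one has $\beta(\overline{r})=\beta(\overline{g})$ for some $\overline{g}$ in $\gamma_{c+1}^{\Lie}(\Lief)/\gamma_{c+1}^{\Lie}(\Lief,\Lier)$, and since $\overline{r}-\overline{g}\in\ker(\beta)\subseteq\Lier/\gamma_{c+1}^{\Lie}(\Lief,\Lier)$ we get $\overline{g}\in\big(\Lier\cap\gamma_{c+1}^{\Lie}(\Lief)\big)/\gamma_{c+1}^{\Lie}(\Lief,\Lier)={\cal M}_{\Lie}^{(c)}(\Lieq)$, whence $\overline{r}\in{\cal M}_{\Lie}^{(c)}(\Lieq)+\Lies/\gamma_{c+1}^{\Lie}(\Lief,\Lier)$.

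Now I build the covering. Since $\Lieq$ is finite-dimensional, ${\cal M}_{\Lie}^{(c)}(\Lieq)$ is finite-dimensional by Corollary \ref{several properties}\,(a). Repeating the construction in the proof of Corollary \ref{existence}, but carried out inside the ideal $\Lies/\gamma_{c+1}^{\Lie}(\Lief,\Lier)$ and with respect to its sub-ideal ${\cal M}_{\Lie}^{(c)}(\Lieq)\cap\big(\Lies/\gamma_{c+1}^{\Lie}(\Lief,\Lier)\big)$, I choose a two-sided ideal $\Lies^{\ast}$ of $\Lief$ with $\gamma_{c+1}^{\Lie}(\Lief,\Lier)\subseteq\Lies^{\ast}\subseteq\Lies$ such that $\Lies^{\ast}/\gamma_{c+1}^{\Lie}(\Lief,\Lier)$ is a complement of ${\cal M}_{\Lie}^{(c)}(\Lieq)\cap\big(\Lies/\gamma_{c+1}^{\Lie}(\Lief,\Lier)\big)$ in $\Lies/\gamma_{c+1}^{\Lie}(\Lief,\Lier)$. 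By the displayed decomposition, $\Lies^{\ast}/\gamma_{c+1}^{\Lie}(\Lief,\Lier)$ is then also a complement of ${\cal M}_{\Lie}^{(c)}(\Lieq)$ inside $\Lier/\gamma_{c+1}^{\Lie}(\Lief,\Lier)$, i.e. $\Lier/\gamma_{c+1}^{\Lie}(\Lief,\Lier)\cong{\cal M}_{\Lie}^{(c)}(\Lieq)\oplus\Lies^{\ast}/\gamma_{c+1}^{\Lie}(\Lief,\Lier)$. Hence, by Lemma \ref{equivalence property} applied with the ideal $\Lies^{\ast}$ (conditions (a) and (b)), the extension $0\to\Lier/\Lies^{\ast}\to\Lief/\Lies^{\ast}\to\Lieq\to 0$ is a $c$-$\Lie$-stem cover, so $\Lieq^{\ast}:=\Lief/\Lies^{\ast}$ is a $c$-$\Lie$-covering of $\Lieq$. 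Finally, since $\Lies^{\ast}\subseteq\Lies$ and both are two-sided ideals of $\Lief$, the subspace $\Lies/\Lies^{\ast}$ is a two-sided ideal of $\Lieq^{\ast}$ and $\Lieq^{\ast}/(\Lies/\Lies^{\ast})\cong\Lief/\Lies\cong\Lieh$, which exhibits $\Lieh$ as a quotient of the $c$-$\Lie$-covering $\Lieq^{\ast}$.

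The main obstacle is the construction of $\Lies^{\ast}$: it must simultaneously be a genuine two-sided ideal of $\Lief$, be contained in the ideal $\Lies$ coming from the presentation of $\Lieh$, and cut out a vector-space complement of ${\cal M}_{\Lie}^{(c)}(\Lieq)$ in $\Lier/\gamma_{c+1}^{\Lie}(\Lief,\Lier)$. The first and last of these are precisely what the complement-choosing argument of Corollary \ref{existence} delivers; containment in $\Lies$ is then arranged by running that argument relative to the sub-ideal ${\cal M}_{\Lie}^{(c)}(\Lieq)\cap(\Lies/\gamma_{c+1}^{\Lie}(\Lief,\Lier))$ of $\Lies/\gamma_{c+1}^{\Lie}(\Lief,\Lier)$, the decomposition of the second paragraph guaranteeing that the resulting complement still complements the whole of ${\cal M}_{\Lie}^{(c)}(\Lieq)$.
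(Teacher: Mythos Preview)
Your argument is correct and follows essentially the same route as the paper's own proof. Both realise $\Lieh$ as a quotient $\Lief/\Lies_0$ (your $\Lies$) of $\Lief/\gamma_{c+1}^{\Lie}(\Lief,\Lier)$ via Lemma~\ref{lema 1}, then choose a smaller $\Lies^{\ast}$ (the paper's $\Lies$) inside $\Lies_0$ so that $\Lies^{\ast}/\gamma_{c+1}^{\Lie}(\Lief,\Lier)$ complements $\bigl(\Lies_0\cap\gamma_{c+1}^{\Lie}(\Lief)\bigr)/\gamma_{c+1}^{\Lie}(\Lief,\Lier)={\cal M}_{\Lie}^{(c)}(\Lieq)\cap\bigl(\Lies_0/\gamma_{c+1}^{\Lie}(\Lief,\Lier)\bigr)$ in $\Lies_0/\gamma_{c+1}^{\Lie}(\Lief,\Lier)$, and conclude via Lemma~\ref{equivalence property} that $\Lief/\Lies^{\ast}$ is a $c$-$\Lie$-covering with $\Lieh$ as a quotient. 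Your write-up is in fact slightly more careful than the paper's: you correctly establish only the \emph{sum} $\Lier/\gamma_{c+1}^{\Lie}(\Lief,\Lier)={\cal M}_{\Lie}^{(c)}(\Lieq)+\Lies/\gamma_{c+1}^{\Lie}(\Lief,\Lier)$, whereas the paper states this as a direct-sum complement, which for a mere stem extension need not hold (it is irrelevant to the rest of the argument, but your formulation is the accurate one).
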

\begin{proof}
Let  $0 \to \Lier \to \Lief \overset{\rho} \to \Lieq \to 0$ be a free presentation of $\Lieq$. Using Lemma \ref{lema 1}, choose a two-sided ideal $\Lies_0$ of $\Lief$ such that $\ker(\beta)=\frac{\Lies_0}{\gamma_{c+1}^{\Lie}(\Lief, \Lier)}$. Since the surjective homomorphism $\beta$ is induced by the identity map, it follows that $\ker(\beta)=\ker(\beta_{\mid})$, hence $\Lies_0$ in fact is a two-sided ideal of $\Lier$. Following the proof of Lemma \ref{equivalence property}, we therefore have that $\frac{\Lies_0}{\gamma_{c+1}^{\Lie}(\Lief, \Lier)}$ is the complement of $ {\cal M}_{\Lie}^{(c)} (\Lieq) $ in $\frac{\Lier}{\gamma_{c+1}^{\Lie}(\Lief, \Lier)}.$
So $(\Lier\cap\gamma_{c+1}^{\Lie}(\Lief))+\Lies_0=\Lier$ and $(\Lier\cap\gamma_{c+1}^{\Lie}(\Lief))\cap\Lies_0=\gamma_{c+1}^{\Lie}(\Lief, \Lier).$ Now let $\Lies$ be a two-sided ideal of $\Lies_0$ such that $\frac{\Lies}{\gamma_{c+1}^{\Lie}(\Lief, \Lier)}$ is the complement of $\frac{\Lies_0 \cap {\gamma_{c+1}^{\Lie}(\Lief)}}{{\gamma_{c+1}^{\Lie}(\Lief,\Lier)}}$ in $\frac{\Lies_0}{\gamma_{c+1}^{\Lie}(\Lief, \Lier)}.$ So $(\Lies_0\cap\gamma_{c+1}^{\Lie}(\Lief))+\Lies=\Lies_0$ and $(\Lies_0\cap\gamma_{c+1}^{\Lie}(\Lief))\cap\Lies=\gamma_{c+1}^{\Lie}(\Lief, \Lier).$ This implies that  $(\Lier\cap\gamma_{c+1}^{\Lie}(\Lief))+\Lies=\Lier$ and $(\Lier\cap\gamma_{c+1}^{\Lie}(\Lief))\cap\Lies=\gamma_{c+1}^{\Lie}(\Lief, \Lier)$ since $\Lies\subseteq\Lies_0\subseteq\Lier.$ Therefore $\Lier/\gamma_{c+1}^{\Lie}(\Lief, \Lier) \cong {\cal M}_{\Lie}^{(c)} (\Lieq) \oplus \Lies/\gamma_{c+1}^{\Lie}(\Lief, \Lier)$. Hence $\Lieq^{\ast}:=\Lief/\Lies$ is a $c$-$\Lie$-stem cover  of $\Lieq$ by Lemma \ref{equivalence property}. Clearly $$\frac{\Lieq^{\ast}}{\Lies_0/\Lies}\cong\Lief/\Lies_0\cong\frac{\Lief}{\gamma_{c+1}^{\Lie}(\Lief, \Lier)}/\frac{\Lies_0}{\gamma_{c+1}^{\Lie}(\Lief, \Lier)}=\frac{\Lief}{\gamma_{c+1}^{\Lie}(\Lief, \Lier)}/\ker(\beta)\cong\Lieh.$$ Hence $\Lieh$ is a quotient of $\Lieq^{\ast}$.
\end{proof}

\begin{Le} \label{commutative diagram}
Let $\Lieq$ be a Leibniz algebra and
\[ \xymatrix{
0 \ar[r] & \Lien_1 \ar[r] \ar[d]_{\alpha} & \Lieg_1 \ar[r] \ar[d]_{\beta} & \Lieq \ar[r] \ar[d]_{\gamma} & 0\\
0 \ar[r] & \Lien_2 \ar[r]  & \Lieg_2 \ar[r]  & \Lieq \ar[r] & 0
} \]
be a commutative diagram of short exact sequences of Leibniz algebras such that the bottom row is a $c$-$\Lie$-stem extension. If the homomorphism $\gamma$ is surjective, then $\beta$ is a surjective homomorphism as well.
\end{Le}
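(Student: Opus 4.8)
The plan is to prove that the subalgebra $\Lieh := \im(\beta)$ of $\Lieg_2$ is the whole of $\Lieg_2$, i.e.\ that $\beta$ is onto. Write $p : \Lieg_2 \to \Lieq$ and $p' : \Lieg_1 \to \Lieq$ for the two horizontal maps. Commutativity of the right-hand square gives $p \circ \beta = \gamma \circ p'$, which is surjective because both $\gamma$ and $p'$ are; hence $p(\Lieh) = \Lieq$, and since $\Lien_2 = \ker p$ this forces $\Lieg_2 = \Lieh + \Lien_2$. So the statement reduces to showing $\Lien_2 \subseteq \Lieh$.

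The technical core will be the chain of inclusions
\[
\gamma_i^{\Lie}(\Lieg_2,\Lien_2) \subseteq \Lieh + \gamma_{i+1}^{\Lie}(\Lieg_2,\Lien_2), \qquad i \geq 1,
\]
proved by strong induction on $i$. For $i=1$ this asserts $\Lien_2 \subseteq \Lieh + [\Lien_2,\Lieg_2]_{\Lie}$, which I would deduce from the $c$-$\Lie$-stem hypothesis: $\Lien_2 \subseteq \gamma_{c+1}^{\Lie}(\Lieg_2) \subseteq \gamma_2^{\Lie}(\Lieg_2) = [\Lieg_2,\Lieg_2]_{\Lie}$, together with the expansion $[\Lieh+\Lien_2,\Lieh+\Lien_2]_{\Lie} \subseteq [\Lieh,\Lieh]_{\Lie}+[\Lieg_2,\Lien_2]_{\Lie} \subseteq \Lieh + \gamma_2^{\Lie}(\Lieg_2,\Lien_2)$, using that $\Lieh$ is a subalgebra (so $[\Lieh,\Lieh]_{\Lie}\subseteq\Lieh$) and that $[\Lieg_2,\Lien_2]_{\Lie}=\gamma_2^{\Lie}(\Lieg_2,\Lien_2)$ by definition. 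For the step with $i \geq 2$, assuming the inclusion for all smaller indices I would telescope them, starting from $\gamma_1^{\Lie}(\Lieg_2,\Lien_2)=\Lien_2$, to get $\Lien_2 \subseteq \Lieh + \gamma_i^{\Lie}(\Lieg_2,\Lien_2)$, hence the sharper decomposition $\Lieg_2 = \Lieh + \gamma_i^{\Lie}(\Lieg_2,\Lien_2)$. Writing $Y := \gamma_i^{\Lie}(\Lieg_2,\Lien_2)$, the relation $Y = [\gamma_{i-1}^{\Lie}(\Lieg_2,\Lien_2),\Lieg_2]_{\Lie}$ combined with $\gamma_{i-1}^{\Lie}(\Lieg_2,\Lien_2)\subseteq \Lieh + Y$ (the case $i-1$) and $\Lieg_2 = \Lieh + Y$ gives $Y \subseteq [\Lieh+Y,\Lieh+Y]_{\Lie} \subseteq [\Lieh,\Lieh]_{\Lie}+[\Lieg_2,Y]_{\Lie} \subseteq \Lieh + \gamma_{i+1}^{\Lie}(\Lieg_2,\Lien_2)$, closing the induction.

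With the chain in hand I would conclude at once: telescoping it for $i = 1, \dots, c$ yields $\Lien_2 = \gamma_1^{\Lie}(\Lieg_2,\Lien_2) \subseteq \Lieh + \gamma_{c+1}^{\Lie}(\Lieg_2,\Lien_2)$, and $\gamma_{c+1}^{\Lie}(\Lieg_2,\Lien_2) = 0$ because the bottom row is in particular a $c$-$\Lie$-central extension; hence $\Lien_2 \subseteq \Lieh$ and therefore $\Lieg_2 = \Lieh + \Lien_2 = \Lieh = \im(\beta)$, so $\beta$ is surjective.

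The main obstacle is the inductive step just described. Expanding $\gamma_i^{\Lie}(\Lieg_2,\Lien_2) = [\gamma_{i-1}^{\Lie}(\Lieg_2,\Lien_2),\Lieg_2]_{\Lie}$ naively via $\Lieg_2 = \Lieh + \Lien_2$ produces the cross term $[\Lieh,\Lien_2]_{\Lie}$, which a priori lies only in $\gamma_2^{\Lie}(\Lieg_2,\Lien_2)$, whereas to close the induction one needs it inside the deeper term $\gamma_{i+1}^{\Lie}(\Lieg_2,\Lien_2)$. The device that resolves this — and the only nonformal idea in the proof — is to use the already-established lower instances of the chain to upgrade $\Lieg_2 = \Lieh + \Lien_2$ to $\Lieg_2 = \Lieh + \gamma_i^{\Lie}(\Lieg_2,\Lien_2)$ \emph{before} expanding, so that the offending term becomes $[\Lieh,\gamma_i^{\Lie}(\Lieg_2,\Lien_2)]_{\Lie} \subseteq [\Lieg_2,\gamma_i^{\Lie}(\Lieg_2,\Lien_2)]_{\Lie} = \gamma_{i+1}^{\Lie}(\Lieg_2,\Lien_2)$ automatically. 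Everything else is routine: images of homomorphisms are subalgebras closed under $[-,-]_{\Lie}$, $[\Liem,\Lien]_{\Lie}=[\Lien,\Liem]_{\Lie}$, and the relative lower $\Lie$-central series is decreasing; Proposition \ref{inclusion} is available if one prefers to read $c$-$\Lie$-centrality as $\Lien_2 \subseteq \ze_c^{\Lie}(\Lieg_2)$, but it is not actually needed.
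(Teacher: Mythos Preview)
Your proof is correct and follows the same strategy as the paper's: first establish $\Lieg_2 = \im(\beta) + \Lien_2$ from the surjectivity of $\gamma$, and then argue that $\Lien_2 \subseteq \im(\beta)$ using the two hypotheses $\Lien_2 \subseteq \gamma_{c+1}^{\Lie}(\Lieg_2)$ and $\gamma_{c+1}^{\Lie}(\Lieg_2,\Lien_2)=0$. The paper compresses this second step into a single line, asserting $\Lien_2 \subseteq \beta\bigl(\gamma_{c+1}^{\Lie}(\Lieg_1)\bigr)$ without explanation; your induction on $i$ proving $\gamma_i^{\Lie}(\Lieg_2,\Lien_2) \subseteq \Lieh + \gamma_{i+1}^{\Lie}(\Lieg_2,\Lien_2)$, with the device of upgrading $\Lieg_2 = \Lieh + \Lien_2$ to $\Lieg_2 = \Lieh + \gamma_i^{\Lie}(\Lieg_2,\Lien_2)$ before expanding, is precisely the justification the paper's sentence needs.
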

\begin{proof}
Obviously $\Lieg_2 = \im(\beta) + \Lien_2$.
Since $\Lien_2 \subseteq \gamma_{c+1}^{\Lie}(\Lieg_2)$ and $\gamma_{c+1}^{\Lie}(\Lieg_2, \Lien_2)=0$, then   $\Lien_2 \subseteq \beta \left( \gamma_{c+1}^{\Lie}(\Lieg_1) \right)$.
Therefore  $\Lieg_2 \subseteq \im(\beta) +  \beta \left( \gamma_{c+1}^{\Lie}(\Lieg_1) \right)$, i.e. $\beta$ is surjective.
\end{proof}

\begin{Th} \label{epimorphism1}
Let $\Lieq$ be a Leibniz algebra and let $0 \to  \Liem_i \to \Lieh_i \overset{\theta_i} \to \Lieq \to 0, i =1, 2$, be two $c$-$\Lie$-stem covers of $\Lieq$. If $\eta : \Lieh_1 \to \Lieh_2$ is a surjective homomorphism such that $\eta(\Liem_1) \subseteq \Liem_2$, then $\eta$ is an isomorphism.
\end{Th}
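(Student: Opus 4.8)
The plan is to exploit that, for a $c$-$\Lie$-stem cover, the kernel is rigidly identified with the $c$-nilpotent Schur $\Lie$-multiplier, and then run a short diagram chase on a ladder induced by $\eta$. First I would build the ladder: since $\eta(\Liem_1)\subseteq\Liem_2=\ker(\theta_2)$, the map $\eta$ carries $\ker(\theta_1)$ into $\ker(\theta_2)$, hence induces $\bar\eta:\Lieq\to\Lieq$ with $\theta_2\circ\eta=\bar\eta\circ\theta_1$ together with its restriction $\eta_0:\Liem_1\to\Liem_2$, yielding a commutative diagram of short exact sequences whose rows are the two given stem covers and whose vertical maps are $\eta_0$, $\eta$, $\bar\eta$. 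From $\theta_2$ and $\eta$ surjective we get $\bar\eta\circ\theta_1$ surjective, so $\bar\eta$ is a surjective endomorphism of $\Lieq$ and hence an isomorphism --- automatic when $\Lieq$ is finite dimensional, the situation relevant here.

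Next I would record the rigidity of the kernels. For each $i$, being a $c$-$\Lie$-stem extension gives, by Proposition \ref{stem ext} (c) and (d), an exact sequence $\Liem_i\otimes^c(\Lieh_i)_{\Lie}\to{\cal M}_{\Lie}^{(c)}(\Lieh_i)\to{\cal M}_{\Lie}^{(c)}(\Lieq)\stackrel{\delta_i}{\to}\Liem_i\to 0$ with $\delta_i$ surjective; being moreover a $c$-$\Lie$-stem cover, Proposition \ref{stem cover} (b) forces the map ${\cal M}_{\Lie}^{(c)}(\Lieh_i)\to{\cal M}_{\Lie}^{(c)}(\Lieq)$ to vanish, so by exactness $\delta_i$ is injective as well; thus $\delta_i:{\cal M}_{\Lie}^{(c)}(\Lieq)\stackrel{\cong}{\to}\Liem_i$. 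These $\delta_i$ are the connecting maps of the four-term exact sequence of Proposition \ref{exact sequences} (a) applied to $\Liem_i$ inside $\Lieh_i$; choosing a free presentation of $\Lieh_1$ and lifting the maps to free presentations along $\eta$ in the spirit of Lemma \ref{lema 1}, together with functoriality of $\gamma_{c+1}^{\Lie}(-,-)$, one checks that the square with horizontal arrows $\delta_1,\delta_2$, left vertical ${\cal M}_{\Lie}^{(c)}(\bar\eta)$ and right vertical $\eta_0$ commutes.

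Finally I would conclude. If $k\in\ker(\eta)$ then $\bar\eta(\theta_1(k))=\theta_2(\eta(k))=0$, so $\theta_1(k)\in\ker(\bar\eta)=0$ and $k\in\Liem_1$; hence $\ker(\eta)=\ker(\eta_0)$. Since $\bar\eta$ is an isomorphism, so is ${\cal M}_{\Lie}^{(c)}(\bar\eta)$, and the commuting square together with $\delta_1,\delta_2$ being isomorphisms shows $\eta_0=\delta_2\circ{\cal M}_{\Lie}^{(c)}(\bar\eta)\circ\delta_1^{-1}$ is an isomorphism; therefore $\ker(\eta)=\ker(\eta_0)=0$ and $\eta$, being bijective, is an isomorphism. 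In the finite dimensional case the naturality step can be bypassed: once $\bar\eta$ is an isomorphism, $\eta_0$ is onto --- lift $m\in\Liem_2\subseteq\gamma_{c+1}^{\Lie}(\Lieh_2)=\eta(\gamma_{c+1}^{\Lie}(\Lieh_1))$ to $x\in\gamma_{c+1}^{\Lie}(\Lieh_1)$, then $\theta_1(x)\in\ker(\bar\eta)=0$ gives $x\in\Liem_1$ --- and $\dim\Liem_1=\dim{\cal M}_{\Lie}^{(c)}(\Lieq)=\dim\Liem_2$ forces $\eta_0$, hence $\eta$, to be bijective. I expect the main obstacle to be the soft inputs: confirming that the surjective endomorphism $\bar\eta$ of $\Lieq$ is an isomorphism (a Hopfian/finiteness hypothesis, implicit here) and the naturality of the connecting isomorphisms $\delta_i$ for the pair $(\eta,\bar\eta)$; both reduce to the free-presentation bookkeeping already used in Lemmas \ref{lema 1} and \ref{commutative diagram}.
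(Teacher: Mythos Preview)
Your argument hinges on the step ``$\bar\eta$ is a surjective endomorphism of $\Lieq$ and hence an isomorphism''. That is only valid when $\Lieq$ is Hopfian (e.g. finite dimensional), and you flag this yourself; but the theorem is stated for an arbitrary Leibniz algebra $\Lieq$, with no such hypothesis. Everything downstream in your proof --- the naturality square for the $\delta_i$, the identification $\eta_0=\delta_2\circ{\cal M}_{\Lie}^{(c)}(\bar\eta)\circ\delta_1^{-1}$, and the dimension-count bypass --- relies on $\bar\eta$ being invertible, so without a Hopfian assumption the argument does not close. There is no evident way to rescue it: knowing only that $\bar\eta$ is onto, one cannot control $\ker\eta$ from the ladder alone.

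The paper's proof avoids this issue entirely by never passing through an endomorphism of $\Lieq$. Instead it fixes a single free presentation $0\to\Lier\to\Lief\to\Lieq\to 0$, realises both stem covers as $\Lieh_i\cong\Lief/\Lies_i$ via Lemma \ref{equivalence property}, and factors the surjection $\beta_2:\Lief/\gamma_{c+1}^{\Lie}(\Lief,\Lier)\twoheadrightarrow\Lief/\Lies_2$ (from that lemma) as $\bar\eta\circ\delta$ for some $\delta:\Lief/\gamma_{c+1}^{\Lie}(\Lief,\Lier)\to\Lief/\Lies_1$ obtained by freeness of $\Lief$; Lemma \ref{commutative diagram} makes $\delta$ surjective. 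Writing $\ker\delta=\Lies/\gamma_{c+1}^{\Lie}(\Lief,\Lier)$, both $\Lies/\gamma_{c+1}^{\Lie}(\Lief,\Lier)$ and $\Lies_2/\gamma_{c+1}^{\Lie}(\Lief,\Lier)$ are complements of ${\cal M}_{\Lie}^{(c)}(\Lieq)$ in $\Lier/\gamma_{c+1}^{\Lie}(\Lief,\Lier)$, and $\Lies\subseteq\Lies_2$ since $\ker\delta\subseteq\ker\beta_2$; a complement contained in another complement of the same subspace must coincide with it (an elementary vector-space argument, no dimension count needed), so $\Lies=\Lies_2$, $\ker\delta=\ker\beta_2$, and $\bar\eta$ is injective. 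Thus the free-presentation bookkeeping that you invoke only for naturality is, in the paper, doing the real work of eliminating the Hopfian obstruction.
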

\begin{proof}
 Let $0 \to \Lier \to \Lief \overset{\rho} \to \Lieq \to 0$ be a free presentation of $\Lieq$. Then by Lemma \ref{equivalence property}, there exist two-sided ideals $\Lies_i$ of $\Lief,$ $i=1,2,$ such that  $\Lieh_i\cong \Lief/\Lies_i,$  $\Liem_i \cong \Lier/\Lies_i$ and  $\Lier/\gamma_{c+1}^{\Lie}(\Lief, \Lier) \cong {\cal M}_{\Lie}^{(c)} (\Lieq) \oplus \Lies_i/\gamma_{c+1}^{\Lie}(\Lief, \Lier).$ It is therefore enough to prove that the surjective homomorphism  $\bar{\eta}:\Lief/\Lies_1\to\Lief/\Lies_2$ induced by $\eta$ is an isomorphism. Following the proof of Lemma \ref{equivalence property}, we have a surjective homomorphism  $\beta_2 : \frac{\Lief}{\gamma_{c+1}^{\Lie}(\Lief, \Lier)} \to \Lieh_2$ such that $\beta_2 \left( \frac{\Lier}{\gamma_{c+1}^{\Lie}(\Lief, \Lier)} \right) = \Liem_2$ and $\ker (\beta_2)= \frac{\Lies_2}{\gamma_{c+1}^{\Lie}(\Lief, \Lier)}$. Since $\Lief$ is a free Leibniz algebra, there exists a homomorphism $\tilde{\delta}:\Lief\to\Lief/\Lies_1$ such that the following  diagram is commutative,
  \begin{equation}
  \xymatrix{
     \\
 &  & \frac{\Lief}{\gamma_{c+1}^{\Lie}(\Lief, \Lier)} \ar@{>>}[dd]^{\delta} \ar@{>>}[rd]^{\beta_2} \\
& {\Lief} \ar@{>>}[ru]^{nat} \ar[rd]^{\tilde{\delta}}& & \Lief/\Lies_2   &  \\
   &  & \Lief/\Lies_1 \ar@{>>}[ru]^{\bar{\eta}}&
  }
  \end{equation}where $\delta$ is a homomorphism induced by $\tilde{\delta}.$ Since $\theta_1 \circ \delta = \overline{\rho}$, then $\delta$ is a surjective homomorphism by Lemma \ref{commutative diagram}

 In addition, if $\Lies$ is a two-sided ideal of $\Lief$ such that  $\ker(\delta)= \frac{\Lies}{\gamma_{c+1}^{\Lie}(\Lief, \Lier)},$ then $\frac{\Lies}{\gamma_{c+1}^{\Lie}(\Lief, \Lier)}$ is the complement of $ {\cal M}_{\Lie}^{(c)} (\Lieq) $ in $\frac{\Lier}{\gamma_{c+1}^{\Lie}(\Lief, \Lier)},$ which implies that $(\Lier\cap\gamma_{c+1}^{\Lie}(\Lief))+\Lies=\Lier.$ Now, one easily shows that $\Lies\subseteq\Lies_2.$ Therefore $\Lies=\Lies_2$ by Lemma \ref{equivalence property}. Hence $\ker(\delta)=\ker(\beta_2),$ which implies that $\bar{\eta}$ is injective.
\end{proof}

\begin{Co} \label{hopfian}
Let $\Lieq$ be a nilpotent Leibniz algebra of class $c \geq 1$, then every $c$-$\Lie$-covering of $\Lieq$ is Hopfian.
\end{Co}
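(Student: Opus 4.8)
The plan is to derive this directly from Theorem \ref{epimorphism1}. Fix a $c$-$\Lie$-covering $\Lieq^{\ast}$ of $\Lieq$, so that there is a $c$-$\Lie$-stem cover $0 \to \Liem \to \Lieq^{\ast} \overset{\theta}{\to} \Lieq \to 0$ with $\Liem \cong {\cal M}_{\Lie}^{(c)}(\Lieq)$ and $\Liem \subseteq \gamma_{c+1}^{\Lie}(\Lieq^{\ast})$, and let $\phi : \Lieq^{\ast} \twoheadrightarrow \Lieq^{\ast}$ be an arbitrary surjective homomorphism; I must show that $\phi$ is injective.

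The first and only substantial step is to identify $\Liem$ with the verbal ideal $\gamma_{c+1}^{\Lie}(\Lieq^{\ast})$. One inclusion, $\Liem \subseteq \gamma_{c+1}^{\Lie}(\Lieq^{\ast})$, is precisely the $c$-$\Lie$-stem hypothesis. For the reverse one I use that $\Lieq$ is $\Lie$-nilpotent of class $c$, so $\gamma_{c+1}^{\Lie}(\Lieq)=0$; applying the compatibility of the lower $\Lie$-central series with homomorphisms recalled in Section \ref{nil} to the surjection $\theta$ gives $\theta(\gamma_{c+1}^{\Lie}(\Lieq^{\ast})) \subseteq \gamma_{c+1}^{\Lie}(\Lieq)=0$, hence $\gamma_{c+1}^{\Lie}(\Lieq^{\ast}) \subseteq \ker\theta = \Liem$. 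Therefore $\Liem = \gamma_{c+1}^{\Lie}(\Lieq^{\ast})$, which in particular is invariant under every surjective endomorphism of $\Lieq^{\ast}$: an easy induction on $i$, using that $\phi$ carries the $\Lie$-commutator $[X,Y]_{\Lie}$ onto $[\phi(X),\phi(Y)]_{\Lie}$ and that $\phi(\Lieq^{\ast}) = \Lieq^{\ast}$, shows $\phi(\gamma_i^{\Lie}(\Lieq^{\ast})) = \gamma_i^{\Lie}(\Lieq^{\ast})$ for all $i$, and in particular $\phi(\Liem)=\Liem$.

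Now I apply Theorem \ref{epimorphism1}, taking both $c$-$\Lie$-stem covers in its statement to be $0 \to \Liem \to \Lieq^{\ast} \overset{\theta}{\to} \Lieq \to 0$ and taking $\eta = \phi$: the required hypotheses, namely that $\eta$ is surjective and $\eta(\Liem_1)\subseteq\Liem_2$, hold by the previous paragraph, so $\phi$ is an isomorphism, and $\Lieq^{\ast}$ is Hopfian. The main obstacle is thus entirely concentrated in the first step — recognising that the kernel of a $c$-$\Lie$-stem cover over a class-$c$ algebra is exactly the characteristic ideal $\gamma_{c+1}^{\Lie}(\Lieq^{\ast})$; once this is observed the corollary is an immediate instance of Theorem \ref{epimorphism1}, and no finite dimensionality or nilpotency of $\Lieq^{\ast}$ itself is needed. (If one wished to invoke a version of Theorem \ref{epimorphism1} for morphisms of stem covers compatible with the projections onto $\Lieq$, one would instead run the argument with $0 \to \phi^{-1}(\Liem) \to \Lieq^{\ast} \overset{\theta\circ\phi}{\to} \Lieq \to 0$ as the source extension, noting $\phi^{-1}(\Liem)=\Liem$ once $\ker\phi\subseteq\Liem$ is known; but the statement as given makes this superfluous.)
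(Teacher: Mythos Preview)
Your proof is correct and follows essentially the same route as the paper's: identify $\Liem$ with $\gamma_{c+1}^{\Lie}(\Lieq^{\ast})$ (the paper does this via Proposition~\ref{stem cover}, you do it directly from $\theta(\gamma_{c+1}^{\Lie}(\Lieq^{\ast}))\subseteq\gamma_{c+1}^{\Lie}(\Lieq)=0$), observe that any surjective endomorphism preserves $\gamma_{c+1}^{\Lie}(\Lieq^{\ast})$, and then invoke Theorem~\ref{epimorphism1}. The only cosmetic difference is that the paper phrases the intermediate step as exhibiting $0\to\gamma_{c+1}^{\Lie}(\Lieq^{\ast})\to\Lieq^{\ast}\to\Lieq\to0$ as a second $c$-$\Lie$-stem cover, whereas you simply note $\Liem=\gamma_{c+1}^{\Lie}(\Lieq^{\ast})$ and feed the original cover twice into Theorem~\ref{epimorphism1}.
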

\begin{proof}
Let $0 \to  \Liem \to \Lieq^{\ast} \overset{\psi} \to \Lieq \to 0$ is a $c$-\Lie-stem cover  of $\Lieq.$  Then by Proposition \ref{stem cover},  $\frac{\Lieq^{\ast}}{\gamma_{c+1}^{\Lie}(\Lieq^{\ast})} \cong \frac{\Lieq}{\gamma_{c+1}^{\Lie}(\Lieq)}\cong \Lieq$ since  $\Lieq$ is  a nilpotent Leibniz algebra of class $c$,  hence $0 \to \gamma_{c+1}^{\Lie}(\Lieq^{\ast}) \to \Lieq^{\ast} \to \Lieq \to 0$ is a $c$-$\Lie$-stem cover. Now let $\eta: \Lieq^{\ast} \twoheadrightarrow \Lieq^{\ast}$ be a surjective homomorphism, then $\eta(\Liem) \subseteq \gamma_{c+1}^{\Lie}(\Lieq^{\ast})$ since $\Liem \subseteq \gamma_{c+1}^{\Lie}(\Lieq^{\ast})$. So $\eta([\Lieq^{\ast},\Lieq^{\ast}]_{\Lie})=[\eta(\Lieq^{\ast}),\eta(\Lieq^{\ast})]_{\Lie}=[\Lieq^{\ast},\Lieq^{\ast}]_{\Lie},$ and thus $\eta(\gamma_{c+1}^{\Lie}(\Lieq^{\ast}))=\gamma_{c+1}^{\Lie}(\Lieq^{\ast}).$  It follows now by Theorem \ref{epimorphism1} that $\eta$ is an isomorphism, hence $\Lieq^{\ast}$ is Hophian.
\end{proof}

\begin{Th}
Let $0 \to  \Liem_i \to \Lieh_i \overset{\theta_i} \to \Lieq \to 0, i =1, 2$, be two $c$-$\Lie$-stem covers of a finite-dimensional Leibniz algebra $\Lieq$. Then ${\cal Z}_{c+1}^{\Lie}(\Lieh_1)/\Liem_1 \cong {\cal Z}_{c+1}^{\Lie}(\Lieh_2)/\Liem_2$.
\end{Th}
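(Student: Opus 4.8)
The plan is to fix a free presentation $0 \to \Lier \to \Lief \overset{\rho}\to \Lieq \to 0$ with $\Lief$ free on finitely many generators (possible since $\Lieq$ is finite-dimensional) and to realise both stem covers as quotients of $\Lief$. By Lemma \ref{equivalence property} there are two-sided ideals $\Lies_1,\Lies_2$ of $\Lief$ with $\gamma_{c+1}^{\Lie}(\Lief,\Lier)\subseteq\Lies_i\subseteq\Lier$ and isomorphisms $\Lieh_i\cong\Lief/\Lies_i$ compatible with the projections onto $\Lieq$, under which $\Liem_i$ corresponds to $\Lier/\Lies_i$. From the proof of that lemma I would record two facts about each $\Lies_i$: (i) $\Lies_i+(\Lier\cap\gamma_{c+1}^{\Lie}(\Lief))=\Lier$, which follows from $\Lier/\Lies_i=\Liem_i\subseteq\gamma_{c+1}^{\Lie}(\Lief/\Lies_i)=(\gamma_{c+1}^{\Lie}(\Lief)+\Lies_i)/\Lies_i$ together with $\Lies_i\subseteq\Lier$ and the modular law; and (ii) $\Lies_i\cap\gamma_{c+1}^{\Lie}(\Lief)=\gamma_{c+1}^{\Lie}(\Lief,\Lier)$, where $\supseteq$ is automatic and equality comes from the dimension identity $\dim(\Lier/\Lies_i)=\dim\Liem_i=\dim{\cal M}_{\Lie}^{(c)}(\Lieq)=\dim\big((\Lier\cap\gamma_{c+1}^{\Lie}(\Lief))/\gamma_{c+1}^{\Lie}(\Lief,\Lier)\big)$ combined with the isomorphism $\Lier/\Lies_i\cong(\Lier\cap\gamma_{c+1}^{\Lie}(\Lief))/(\Lies_i\cap\gamma_{c+1}^{\Lie}(\Lief))$ obtained from (i).

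Writing $p_i : \Lief \twoheadrightarrow \Lief/\Lies_i$ for the projection, I would put $\te_i:=p_i^{-1}\big({\ze}_{c+1}^{\Lie}(\Lief/\Lies_i)\big)$, a two-sided ideal of $\Lief$. Since each extension $0\to\Liem_i\to\Lieh_i\to\Lieq\to0$ is $c$-$\Lie$-central, Proposition \ref{inclusion} gives $\Liem_i\subseteq{\ze}_c^{\Lie}(\Lieh_i)\subseteq{\ze}_{c+1}^{\Lie}(\Lieh_i)$, so $\Lier\subseteq\te_i$, while $\Lies_i\subseteq\te_i$ and $\te_i/\Lies_i={\ze}_{c+1}^{\Lie}(\Lief/\Lies_i)$ by construction. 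Hence
\[
{\ze}_{c+1}^{\Lie}(\Lieh_i)/\Liem_i\;\cong\;{\ze}_{c+1}^{\Lie}(\Lief/\Lies_i)/(\Lier/\Lies_i)\;=\;(\te_i/\Lies_i)/(\Lier/\Lies_i)\;\cong\;\te_i/\Lier ,
\]
and the theorem reduces to proving $\te_1=\te_2$ as two-sided ideals of $\Lief$.

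The heart of the matter is a description of $\te_i$ not mentioning $\Lies_i$. Applying Proposition \ref{inclusion} with $c+1$ in place of $c$ to the ideal $\te_i/\Lies_i={\ze}_{c+1}^{\Lie}(\Lief/\Lies_i)$ of $\Lief/\Lies_i$ gives $\gamma_{c+2}^{\Lie}(\Lief/\Lies_i,\te_i/\Lies_i)=0$, so surjectivity of $p_i$ yields $\gamma_{c+2}^{\Lie}(\Lief,\te_i)\subseteq\Lies_i$; together with $\gamma_{c+2}^{\Lie}(\Lief,\te_i)\subseteq\gamma_{c+1}^{\Lie}(\Lief)$ and (ii) this gives $\gamma_{c+2}^{\Lie}(\Lief,\te_i)\subseteq\gamma_{c+1}^{\Lie}(\Lief,\Lier)$. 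I then claim
\[
\te_i\;=\;\te:=\{\,f\in\Lief\ :\ \gamma_{c+2}^{\Lie}(\Lief,\langle f\rangle+\Lier)\subseteq\gamma_{c+1}^{\Lie}(\Lief,\Lier)\,\},
\]
with $\langle f\rangle$ the two-sided ideal of $\Lief$ generated by $f$. Indeed, if $f\in\te_i$ then $\langle f\rangle+\Lier\subseteq\te_i$ (an ideal containing $f$ and $\Lier$), so $\gamma_{c+2}^{\Lie}(\Lief,\langle f\rangle+\Lier)\subseteq\gamma_{c+2}^{\Lie}(\Lief,\te_i)\subseteq\gamma_{c+1}^{\Lie}(\Lief,\Lier)$; conversely, if $f\in\te$ then $\Lies_i\subseteq\Lier$ and $\gamma_{c+1}^{\Lie}(\Lief,\Lier)\subseteq\Lies_i$ give $\gamma_{c+2}^{\Lie}(\Lief,\langle f\rangle+\Lies_i)\subseteq\gamma_{c+2}^{\Lie}(\Lief,\langle f\rangle+\Lier)\subseteq\Lies_i$, i.e. $\gamma_{c+2}^{\Lie}(\Lief/\Lies_i,(\langle f\rangle+\Lies_i)/\Lies_i)=0$, so Proposition \ref{inclusion} (again with $c+1$ in place of $c$) forces $f+\Lies_i\in{\ze}_{c+1}^{\Lie}(\Lief/\Lies_i)$, i.e. $f\in\te_i$. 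Since $\te$ is independent of $i$, this proves $\te_1=\te_2$ and the displayed isomorphism finishes the argument.

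I expect the main obstacle to be fact (ii): one must check that the ideal $\Lies_i$ supplied by Lemma \ref{equivalence property} meets $\gamma_{c+1}^{\Lie}(\Lief)$ in exactly $\gamma_{c+1}^{\Lie}(\Lief,\Lier)$, and this is the single place where finite-dimensionality of $\Lieq$ — hence of ${\cal M}_{\Lie}^{(c)}(\Lieq)$, by Corollary \ref{several properties} — is genuinely used, through the dimension count. The remaining steps are routine but need care: using that ${\ze}_{c+1}^{\Lie}$ is a two-sided ideal and that an element lies in a two-sided ideal exactly when the ideal it generates does, that Proposition \ref{inclusion} applies verbatim with $c+1$ in place of $c$, that a surjective homomorphism carries each ideal $\gamma_i^{\Lie}(-,-)$ onto the corresponding ideal of the quotient (a consequence of the functoriality recalled in Section \ref{preliminaries}), and checking at the outset that the isomorphisms $\Lieh_i\cong\Lief/\Lies_i$ from Lemma \ref{equivalence property} are compatible with the projections onto $\Lieq$ so that $\Liem_i$ truly corresponds to $\Lier/\Lies_i$.
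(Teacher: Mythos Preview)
Your proof is correct and follows essentially the same strategy as the paper's: realise each stem cover as $\Lief/\Lies_i$ via Lemma~\ref{equivalence property}, and show that the preimage $\te_i$ of ${\ze}_{c+1}^{\Lie}(\Lief/\Lies_i)$ in $\Lief$ is independent of $i$, so that ${\ze}_{c+1}^{\Lie}(\Lieh_i)/\Liem_i\cong\te_i/\Lier$ is a common invariant. The paper packages the $\Lies$-independent object slightly more cleanly: it takes $\Liet$ to be the preimage of ${\ze}_{c+1}^{\Lie}\!\bigl(\Lief/\gamma_{c+1}^{\Lie}(\Lief,\Lier)\bigr)$ and proves ${\ze}_{c+1}^{\Lie}(\Lief/\Lies)=\Liet/\Lies$ directly, which is exactly your element-wise set $\te$ once one unwinds Proposition~\ref{inclusion}. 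One small remark: your fact~(ii), $\Lies_i\cap\gamma_{c+1}^{\Lie}(\Lief)=\gamma_{c+1}^{\Lie}(\Lief,\Lier)$, already follows immediately from the internal direct sum in Lemma~\ref{equivalence property}~(b) without any dimension count, so the finite-dimensionality hypothesis is not actually needed at that step (nor, in fact, anywhere in the argument, since the stem covers are given).
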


\begin{proof}
Let  $0 \to \Lier \to \Lief \overset{\rho} \to \Lieq \to 0$ be a free presentation of $\Lieq,$ and $\Lieq^{\ast}$ a $c$-$\Lie$-covering of $\Lieq$, which exists by Corollary \ref{existence}. Then by Lemma \ref{equivalence property}, there exists a two-sided ideal $\Lies$ of $\Lief$ such that   $\Lieq^{\ast} \cong \Lief/\Lies$ and $\Liem \cong \Lier/\Lies$, and
$\Lier/\gamma_{c+1}^{\Lie}(\Lief, \Lier) \cong {\cal M}_{\Lie}^{(c)} (\Lieq) \oplus \Lies/\gamma_{c+1}^{\Lie}(\Lief, \Lier)$. Now let $\Liet$ be a two-sided ideal of $\Lief$ such that  ${\cal Z}_{c+1}^{\Lie}( {\Lief}/{\gamma_{c+1}^{\Lie}(\Lief, \Lier)})= {\Liet}/{\gamma_{c+1}^{\Lie}(\Lief, \Lier)}.$  This implies that   for all $t\in\Liet$ and  $f_i\in\Lief, 1\leq i\leq c,$  we have  $[[[t+\Lies,f_1+\Lies]_{\Lie},f_2+\Lies]_{\Lie}, \dots, f_c+\Lies]_{\Lie}=[[[t,f_1]_{\Lie},f_2]_{\Lie}, \dots, f_c]_{\Lie}+\Lies\in\gamma_{c+1}^{\Lie}(\Lief, \Lier)+\Lies=\Lies$, and thus $\Liet/\Lies\subseteq{\cal Z}_{c+1}^{\Lie}( {\Lief}/{\Lies}).$  We claim that ${\cal Z}_{c+1}^{\Lie}( {\Lief}/{\Lies})\subseteq\Liet/ \Lies.$  Indeed, let $x+\Lies\in {\cal Z}_{c+1}^{\Lie}( {\Lief}/{\Lies}).$ Then  for all $f_i\in\Lief, 1\leq i\leq c,$  we have  $[[[x,f_1]_{\Lie},f_2]_{\Lie}, \dots, f_c]_{\Lie}\in \Lies\bigcap\gamma_{c+1}^{\Lie}(\Lief)=\gamma_{c+1}^{\Lie}(\Lief, \Lier).$ So $x+\gamma_{c+1}^{\Lie}(\Lief, \Lier)\in{\cal Z}_{c+1}^{\Lie}\left( \frac{\Lief}{\gamma_{c+1}^{\Lie}(\Lief, \Lier)} \right)={\Liet}/{\gamma_{c+1}^{\Lie}(\Lief, \Lier)}, $ implying that $x\in\Liet.$ Hence ${\cal Z}_{c+1}^{\Lie}( {\Lief}/{\Lies})=\Liet/\Lies.$    So   ${\cal Z}_{c+1}^{\Lie}(\Lieq^{\ast})/\Liem\cong \frac{{\cal Z}_{c+1}^{\Lie}( {\Lief}/{\Lies})}{\Lier/\Lies}\cong\frac{\Liet/\Lies}{\Lier/\Lies}\cong \Liet/\Lier,$ and  is therefore uniquely determined by the free presentation $0 \to \Lier \to \Lief \overset{\rho} \to \Lieq \to 0.$
\end{proof}

\begin{Th}
Let $\Lieq$ be a $\Lie$-nilpotent Leibniz algebra of class at most $k \geq 1$ such that ${\cal M}_{\Lie}^{(c)}(\Lieq) \neq 0$, for some $c > k$. Then $\Lieq$ has not $c$-covering.
\end{Th}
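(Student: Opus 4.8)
The plan is to argue by contradiction. Suppose $\Lieq$ admits a $c$-$\Lie$-covering, i.e.\ there is a $c$-$\Lie$-stem cover $0 \to \Liem \to \Lieq^{\ast} \stackrel{\pi}\to \Lieq \to 0$; by definition this means $\gamma_{c+1}^{\Lie}(\Lieq^{\ast}, \Liem) = 0$, $\Liem \subseteq \gamma_{c+1}^{\Lie}(\Lieq^{\ast})$ and $\Liem \cong {\cal M}_{\Lie}^{(c)}(\Lieq)$. I will show these three conditions, together with $c > k$ and $\gamma_{k+1}^{\Lie}(\Lieq) = 0$, force $\Liem = 0$, contradicting ${\cal M}_{\Lie}^{(c)}(\Lieq) \neq 0$.

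The first step is to pin down where $\Liem$ sits in the lower $\Lie$-central series of $\Lieq^{\ast}$. Since $\pi$ is surjective, $\pi(\gamma_{i}^{\Lie}(\Lieq^{\ast})) = \gamma_{i}^{\Lie}(\Lieq)$ for all $i$; in particular $\pi(\gamma_{k+1}^{\Lie}(\Lieq^{\ast})) = \gamma_{k+1}^{\Lie}(\Lieq) = 0$, so $\gamma_{k+1}^{\Lie}(\Lieq^{\ast}) \subseteq \ker(\pi) = \Liem$. Because $c > k$, i.e.\ $c \geq k+1$, and the lower $\Lie$-central series is descending, we get the chain
\[
\gamma_{c+1}^{\Lie}(\Lieq^{\ast}) \subseteq \gamma_{c}^{\Lie}(\Lieq^{\ast}) \subseteq \gamma_{k+1}^{\Lie}(\Lieq^{\ast}) \subseteq \Liem \subseteq \gamma_{c+1}^{\Lie}(\Lieq^{\ast}),
\]
whence all these terms coincide with $\Liem$; in particular $\gamma_{c}^{\Lie}(\Lieq^{\ast}) = \Liem = \gamma_{c+1}^{\Lie}(\Lieq^{\ast})$, so that $[\Liem, \Lieq^{\ast}]_{\Lie} = [\gamma_{c}^{\Lie}(\Lieq^{\ast}), \Lieq^{\ast}]_{\Lie} = \gamma_{c+1}^{\Lie}(\Lieq^{\ast}) = \Liem$.

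The second step exploits $\Lie$-centrality of the extension. By Proposition~\ref{inclusion}, $\gamma_{c+1}^{\Lie}(\Lieq^{\ast}, \Liem) = 0$ is equivalent to $\Liem \subseteq {\cal Z}_c^{\Lie}(\Lieq^{\ast})$. Since the upper $\Lie$-central series is itself a $\Lie$-central series, $[{\cal Z}_i^{\Lie}(\Lieq^{\ast}), \Lieq^{\ast}]_{\Lie} \subseteq {\cal Z}_{i-1}^{\Lie}(\Lieq^{\ast})$ for every $i \geq 1$. Feeding the self-reproducing identity $[\Liem, \Lieq^{\ast}]_{\Lie} = \Liem$ obtained above into this inclusion and iterating $c$ times, $\Liem$ descends from ${\cal Z}_c^{\Lie}(\Lieq^{\ast})$ into ${\cal Z}_{c-1}^{\Lie}(\Lieq^{\ast})$, then ${\cal Z}_{c-2}^{\Lie}(\Lieq^{\ast})$, and finally into ${\cal Z}_0^{\Lie}(\Lieq^{\ast}) = 0$. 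Hence $\Liem = 0$, so ${\cal M}_{\Lie}^{(c)}(\Lieq) = 0$, the desired contradiction.

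This argument is essentially index bookkeeping and requires no finiteness hypothesis. The only point demanding care — and where $c > k$ is genuinely used — is the collapse in the first step: one must check that $\Liem$ is not merely sandwiched loosely between $\gamma_{k+1}^{\Lie}(\Lieq^{\ast})$ and $\gamma_{c+1}^{\Lie}(\Lieq^{\ast})$ but actually coincides with $\gamma_c^{\Lie}(\Lieq^{\ast})$, since it is precisely the resulting equality $[\Liem,\Lieq^{\ast}]_{\Lie} = \Liem$ that drives the descent through the upper $\Lie$-central series in the second step.
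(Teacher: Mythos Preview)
Your proof is correct and shares the paper's overall strategy (contradiction, then showing $\gamma_{k+1}^{\Lie}(\Lieq^{\ast}) = \gamma_{c+1}^{\Lie}(\Lieq^{\ast}) = \Liem$), but your second step is cleaner than the paper's. The paper, after establishing $\gamma_{k+1}^{\Lie}(\Lieq^{\ast}) = \gamma_{c+1}^{\Lie}(\Lieq^{\ast}) = \Liem$ and $\gamma_{c+1}^{\Lie}(\Lieq^{\ast},\Liem)=0$, runs an iterative index-shifting argument entirely in the lower $\Lie$-central series: from $\gamma_{k+1}^{\Lie}(\Lieq^{\ast}) = \gamma_{c+1}^{\Lie}(\Lieq^{\ast})$ it deduces $\gamma_{2k+1}^{\Lie}(\Lieq^{\ast}) = \gamma_{c+k+1}^{\Lie}(\Lieq^{\ast}) = 0$, then (if $c<2k$) $\gamma_{3k-c+1}^{\Lie}(\Lieq^{\ast}) = 0$, and so on through a case analysis until the vanishing index drops below $c+1$. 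You bypass this by noting that the chain of equalities also gives $\gamma_{c}^{\Lie}(\Lieq^{\ast}) = \Liem$ (since $k+1 \leq c$), hence the single identity $[\Liem,\Lieq^{\ast}]_{\Lie} = \Liem$, and then descending through the upper $\Lie$-central series in $c$ uniform steps. This avoids the case split and is arguably the more transparent argument; the paper's route has the minor advantage of staying on one side (lower series only), but at the cost of the indexed iteration.
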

\begin{proof}
We proceed by contradiction. Let $0 \to  \Liem \to \Lieq^{\ast} \overset{\psi} \to \Lieq \to 0$ be a $c$-\Lie-stem cover  of $\Lieq.$  Then  $\Liem\subseteq\gamma^{\Lie}_{c+1}(\Lieq^{\ast})$ and $\Liem\cong{\cal M}_{\Lie}^{(c)}(\Lieq).$ Now since $\Lieq$ is $\Lie$-nilpotent of class $k,$ it follows by the proof of Corollary \ref{hopfian}  that $\gamma^{\Lie}_{k+1}(\Lieq^{\ast})\cong\Liem,$ and thus $\gamma^{\Lie}_{c+1}(\Lieq^{\ast})\subseteq\gamma^{\Lie}_{k+1}(\Lieq^{\ast})\cong\Liem$ since $c>k.$ So $\gamma^{\Lie}_{c+1}(\Lieq^{\ast})=\gamma^{\Lie}_{k+1}(\Lieq^{\ast})=\Liem.$  We claim that $\gamma^{\Lie}_{c+1}(\Lieq^{\ast})=0.$ Indeed, we have by Proposition 3.4 that $\Liem\subseteq {\ze}_c^{\Lie}({\Lieq^{\ast}}).$ Therefore $[[[\Liem,\underbrace{\Lieq^{\ast}]_{\Lie},\Lieq^{\ast}]_{\Lie},\dots,\Lieq^{\ast}}_\text{c-times}]_{\Lie}=0.$ So

 $$
 \begin{aligned}
 \gamma^{\Lie}_{2k+1}(\Lieq^{\ast}) &=[[[\gamma^{\Lie}_{k+1}(\Lieq^{\ast}),\underbrace{\Lieq^{\ast}]_{\Lie},\Lieq^{\ast}]_{\Lie},\dots,\Lieq^{\ast}}_\text{k-times}]_{\Lie}\\&=[[[\gamma^{\Lie}_{c+1}(\Lieq^{\ast}),\underbrace{\Lieq^{\ast}]_{\Lie},\Lieq^{\ast}]_{\Lie},\dots,\Lieq^{\ast}}_\text{k-times}]_{\Lie}\\&=\gamma^{\Lie}_{c+k+1}(\Lieq^{\ast})\\&=[[[\gamma^{\Lie}_{k+1}(\Lieq^{\ast}),\underbrace{\Lieq^{\ast}]_{\Lie},\Lieq^{\ast}]_{\Lie},\dots,\Lieq^{\ast}}_\text{c-times}]_{\Lie}\\&=[[[\Liem,\underbrace{\Lieq^{\ast}]_{\Lie},\Lieq^{\ast}]_{\Lie},\dots,\Lieq^{\ast}}_\text{c-times}]_{\Lie}\\&=0.
 \end{aligned}
 $$
 So if $c\geq2k,$ we have $\gamma^{\Lie}_{c+1}(\Lieq^{\ast})\subseteq\gamma^{\Lie}_{2k+1}(\Lieq^{\ast})=0.$ Otherwise, $c<2k$ and we have
 $$
 \begin{aligned}
 \gamma^{\Lie}_{3k-c+1}(\Lieq^{\ast})&=[[[\gamma^{\Lie}_{k+1}(\Lieq^{\ast}),\underbrace{\Lieq^{\ast}]_{\Lie},\Lieq^{\ast}]_{\Lie},\dots,\Lieq^{\ast}}_\text{(2k-c)-times}]_{\Lie}\\&=[[[\gamma^{\Lie}_{c+1}(\Lieq^{\ast}),\underbrace{\Lieq^{\ast}]_{\Lie},\Lieq^{\ast}]_{\Lie},\dots,\Lieq^{\ast}}_\text{(2k-c)-times}]_{\Lie}\\&=\gamma^{\Lie}_{2k+1}(\Lieq^{\ast})=0.
 \end{aligned}$$
Similarly, if $c\geq \frac{3}{2}k,$ we have $\gamma^{\Lie}_{c+1}(\Lieq^{\ast})\subseteq\gamma^{\Lie}_{3k-c+1}(\Lieq^{\ast})=0.$ Otherwise, $c< \frac{3}{2}k$ and we have
$$
 \begin{aligned}
 \gamma^{\Lie}_{4k-2c+1}(\Lieq^{\ast})&=[[[\gamma^{\Lie}_{k+1}(\Lieq^{\ast}),\underbrace{\Lieq^{\ast}]_{\Lie},\Lieq^{\ast}]_{\Lie},\dots,\Lieq^{\ast}}_\text{(3k-2c)-times}]_{\Lie}\\&=[[[\gamma^{\Lie}_{c+1}(\Lieq^{\ast}),\underbrace{\Lieq^{\ast}]_{\Lie},\Lieq^{\ast}]_{\Lie},\dots,\Lieq^{\ast}}_\text{(3k-2c)-times}]_{\Lie}\\&=\gamma^{\Lie}_{3k-c+1}(\Lieq^{\ast})=0.
 \end{aligned}$$
 This process continue and stops when $c\leq k,$  yielding  $\gamma^{\Lie}_{c+1}(\Lieq^{\ast})=0.$ This implies ${\cal M}_{\Lie}^{(c)}(\Lieq)\cong\Liem=0.$ A contradiction.
\end{proof}


\section{$c$-$\Lie$-capability} \label{capability}

In this section we introduce the $c$-$\Lie$-characteristic ideal of a Leibniz algebra, which is used to study $c$-$\Lie$-capability of Leibniz algebras.

\begin{De} \label{capable1}
A Leibniz algebra $\Lieq$ is said to be $c$-$\Lie$-capable if there exists a Leibniz algebra $\Lieh$ such that $\Lieq \cong \Lieh / {\ze}_{c}^{\Lie}(\Lieh)$.

We call $c$-$\Lie$-characteristic ideal of a Leibniz algebra $\Lieq$, denoted by ${\sf Z^{\ast}}(\Lieq)$, to the smallest two-sided ideal $\Lies$ of $\Lieq$ such that $\Lieq / \Lies$ is $c$-$\Lie$-capable.
\end{De}

\begin{Pro}
Let $\Lieq$ be a Leibniz algebra, then ${\sf Z^{\ast}}(\Lieq)$ is a two-sided ideal of $\Lieq$ contained in ${\ze}_{c}^{\Lie}(\Lieq)$, and ${\sf Z^{\ast}}\left(\Lieq/{\sf Z^{\ast}}(\Lieq) \right)=0$.
\end{Pro}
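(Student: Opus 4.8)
The statement presupposes---as does Definition~\ref{capable1}---that the set $\mathcal{S}$ of two-sided ideals $\Lies$ of $\Lieq$ with $\Lieq/\Lies$ being $c$-$\Lie$-capable actually has a least element, so the plan is to (i)~exhibit explicit members of $\mathcal{S}$, (ii)~prove that $\mathcal{S}$ is closed under arbitrary intersections, so that $\bigcap_{\Lies\in\mathcal{S}}\Lies$ is its least element ${\sf Z^{\ast}}(\Lieq)$, and (iii)~read off the three assertions.

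For (i): the zero Leibniz algebra equals its own $c$-$\Lie$-center, so $\Lieq\in\mathcal{S}$, and taking $\Lieh:=\Lieq$ in Definition~\ref{capable1} shows ${\ze}_c^{\Lie}(\Lieq)\in\mathcal{S}$ as well. For (ii) I would fix a free presentation $0\to\Lier\to\Lief\stackrel{\rho}\to\Lieq\to 0$ and, for a two-sided ideal $\Lies$ of $\Lieq$ with preimage $\Lies^{\ast}=\rho^{-1}(\Lies)$ in $\Lief$, establish the criterion
\[
\Lies\in\mathcal{S}\quad\Longleftrightarrow\quad{\ze}_c^{\Lie}\!\bigl(\Lief/\gamma_{c+1}^{\Lie}(\Lief,\Lies^{\ast})\bigr)\subseteq\Lies^{\ast}/\gamma_{c+1}^{\Lie}(\Lief,\Lies^{\ast}).
\]
For ``$\Leftarrow$'' one takes $\Lieh:=\Lief/\gamma_{c+1}^{\Lie}(\Lief,\Lies^{\ast})$: by Proposition~\ref{inclusion} the extension $\Lieh\twoheadrightarrow\Lief/\Lies^{\ast}=\Lieq/\Lies$ with kernel $\Lies^{\ast}/\gamma_{c+1}^{\Lie}(\Lief,\Lies^{\ast})$ is $c$-$\Lie$-central, so that kernel is contained in ${\ze}_c^{\Lie}(\Lieh)$, and the hypothesis gives the reverse inclusion, whence $\Lieh/{\ze}_c^{\Lie}(\Lieh)\cong\Lieq/\Lies$. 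For ``$\Rightarrow$'', given $\Lieq/\Lies\cong\Lieh/{\ze}_c^{\Lie}(\Lieh)$, Lemma~\ref{lema 1} produces a homomorphism $\beta:\Lief/\gamma_{c+1}^{\Lie}(\Lief,\Lies^{\ast})\to\Lieh$ lying over $\mathrm{id}_{\Lieq/\Lies}$, and one shows $\beta$ carries ${\ze}_c^{\Lie}$ of its source into ${\ze}_c^{\Lie}(\Lieh)$, so that commutativity over $\Lieq/\Lies$ forces ${\ze}_c^{\Lie}$ of the source into $\Lies^{\ast}/\gamma_{c+1}^{\Lie}(\Lief,\Lies^{\ast})$. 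With the criterion in hand, closure under intersections is immediate: if $\Lies=\bigcap_\lambda\Lies_\lambda$ then $\Lies^{\ast}=\bigcap_\lambda\Lies_\lambda^{\ast}$ and $\gamma_{c+1}^{\Lie}(\Lief,\Lies^{\ast})\subseteq\gamma_{c+1}^{\Lie}(\Lief,\Lies_\lambda^{\ast})$ for each $\lambda$, so any element of ${\ze}_c^{\Lie}(\Lief/\gamma_{c+1}^{\Lie}(\Lief,\Lies^{\ast}))$ lands, modulo each $\gamma_{c+1}^{\Lie}(\Lief,\Lies_\lambda^{\ast})$, in the corresponding $c$-$\Lie$-center, hence lies in every $\Lies_\lambda^{\ast}$, i.e.\ in $\Lies^{\ast}$.

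Granting (ii), step (iii) is formal. The ideal ${\sf Z^{\ast}}(\Lieq)$ is two-sided by construction. Since ${\ze}_c^{\Lie}(\Lieq)\in\mathcal{S}$ and ${\sf Z^{\ast}}(\Lieq)$ is the least element of $\mathcal{S}$, we get ${\sf Z^{\ast}}(\Lieq)\subseteq{\ze}_c^{\Lie}(\Lieq)$. Finally $\Lieq/{\sf Z^{\ast}}(\Lieq)$ is $c$-$\Lie$-capable by the very definition of ${\sf Z^{\ast}}(\Lieq)$, so the zero ideal belongs to the set $\mathcal{S}$ attached to $\Lieq/{\sf Z^{\ast}}(\Lieq)$, and minimality forces ${\sf Z^{\ast}}\bigl(\Lieq/{\sf Z^{\ast}}(\Lieq)\bigr)=0$.

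I expect the main obstacle to be the ``$\Rightarrow$'' direction of the criterion in (ii), precisely the step upgrading $\beta\bigl({\ze}_c^{\Lie}(\text{source})\bigr)\subseteq{\ze}_c^{\Lie}(\im(\beta))$ to $\beta\bigl({\ze}_c^{\Lie}(\text{source})\bigr)\subseteq{\ze}_c^{\Lie}(\Lieh)$: since the $c$-$\Lie$-center is not functorial along inclusions, this requires expanding iterated $\Lie$-brackets against the decomposition $\Lieh=\im(\beta)+{\ze}_c^{\Lie}(\Lieh)$ and invoking the absorption relations $[\Lieh,{\ze}_j^{\Lie}(\Lieh)]_{\Lie}\subseteq{\ze}_{j-1}^{\Lie}(\Lieh)$ of the upper $\Lie$-central series. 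Everything else reduces to Proposition~\ref{inclusion}, Lemma~\ref{lema 1}, and the minimality built into Definition~\ref{capable1}.
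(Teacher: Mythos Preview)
Your approach is correct and considerably more thorough than the paper's. The paper's proof simply \emph{assumes} that the smallest element ${\sf Z^{\ast}}(\Lieq)$ exists (as Definition~\ref{capable1} does) and then reads off both conclusions directly from minimality: since ${\ze}_c^{\Lie}(\Lieq)\in\mathcal{S}$ one gets ${\sf Z^{\ast}}(\Lieq)\subseteq{\ze}_c^{\Lie}(\Lieq)$, and since $\Lieq/{\sf Z^{\ast}}(\Lieq)$ is $c$-$\Lie$-capable the zero ideal lies in the corresponding $\mathcal{S}$ for the quotient, forcing ${\sf Z^{\ast}}(\Lieq/{\sf Z^{\ast}}(\Lieq))=0$. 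That is the entire argument; existence of the minimum is never justified at this point in the paper.

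What you do instead is fill that gap by proving $\mathcal{S}$ is closed under arbitrary intersections via the free-presentation criterion. This is genuinely different, and in fact anticipates material the paper only develops later: your ``$\Rightarrow$'' direction (that $\beta$ carries ${\ze}_c^{\Lie}$ of the source into ${\ze}_c^{\Lie}(\Lieh)$, using $\Lieh=\im(\beta)+{\ze}_c^{\Lie}(\Lieh)$) is exactly the content of Lemma~\ref{free presentation}, and your criterion applied with $\Lies=0$ recovers Corollary~\ref{capable}. So your route is self-contained and yields existence as a byproduct, at the cost of front-loading a technical lemma; the paper's route is a two-line formality that defers the real work to Proposition~\ref{intersection} and Lemma~\ref{free presentation}. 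The step you flag as the main obstacle is indeed the substantive one, and the expansion you sketch (iterated $\Lie$-brackets against $\Lieh=\im(\beta)+{\ze}_c^{\Lie}(\Lieh)$) is precisely how the paper handles it in the proof of Lemma~\ref{free presentation}.
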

\begin{proof}
Since $\Lieq/{\ze}_{c}^{\Lie}(\Lieq)$ is $c$-$\Lie$-capable and ${\sf Z^{\ast}}(\Lieq)$ is the smallest two-sided ideal such that $\Lieq/{\sf Z^{\ast}}(\Lieq)$ is $c$-$\Lie$-capable, then ${\sf Z^{\ast}}(\Lieq) \subseteq {\ze}_{c}^{\Lie}(\Lieq)$.

  $\Lieq/{\sf Z^{\ast}}(\Lieq)$ is $c$-$\Lie$-capable by definition, then $\Lieq/{\sf Z^{\ast}}(\Lieq) \cong \Lieh/{\ze}_{c}^{\Lie}(\Lieh)$ for some Leibniz algebra $\Lieh$. Since ${\sf Z^{\ast}}\left(\Lieq/{\sf Z^{\ast}}(\Lieq) \right)$ is the smallest two-sided ideal $\Lies$ such that  $\frac{\Lieq/{\sf Z^{\ast}}(\Lieq)}{\Lies}$ is  $c$-$\Lie$-capable, this $\Lies$ should be the trivial one thanks to the above isomorphism.
\end{proof}

\begin{Pro} \label{intersection}
The two-sided ideal ${\sf Z^{\ast}}(\Lieq)$ of a Leibniz algebra $\Lieq$ is the intersection of all two-sided ideals $f \left({\ze}_{c}^{\Lie}(\Lieg) \right)$, where $f : \Lieg \twoheadrightarrow \Lieq$ is a $c$-$\Lie$-central extension.
\end{Pro}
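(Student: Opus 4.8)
The plan is to prove the two inclusions ${\sf Z^{\ast}}(\Lieq)\subseteq W$ and $W\subseteq {\sf Z^{\ast}}(\Lieq)$, where $W:=\bigcap f\!\left({\ze}_c^{\Lie}(\Lieg)\right)$ ranges over all $c$-$\Lie$-central extensions $f:\Lieg\twoheadrightarrow\Lieq$ (the intersection is nonempty since $\mathrm{id}_{\Lieq}$ is such an extension, and an intersection of two-sided ideals is a two-sided ideal).

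For ${\sf Z^{\ast}}(\Lieq)\subseteq W$, I would first fix an arbitrary $c$-$\Lie$-central extension $f:\Lieg\twoheadrightarrow\Lieq$ with kernel $\Lien$. By Proposition \ref{inclusion}, $\Lien\subseteq{\ze}_c^{\Lie}(\Lieg)$, so identifying $\Lieq\cong\Lieg/\Lien$ we get $f\!\left({\ze}_c^{\Lie}(\Lieg)\right)={\ze}_c^{\Lie}(\Lieg)/\Lien$, and hence $\Lieq/f\!\left({\ze}_c^{\Lie}(\Lieg)\right)\cong\Lieg/{\ze}_c^{\Lie}(\Lieg)$, which is $c$-$\Lie$-capable by Definition \ref{capable1}. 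Since ${\sf Z^{\ast}}(\Lieq)$ is the smallest two-sided ideal with $c$-$\Lie$-capable quotient, ${\sf Z^{\ast}}(\Lieq)\subseteq f\!\left({\ze}_c^{\Lie}(\Lieg)\right)$; intersecting over all $f$ gives ${\sf Z^{\ast}}(\Lieq)\subseteq W$. This half is essentially bookkeeping with the isomorphism theorems.

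For the reverse inclusion it is enough to produce one $c$-$\Lie$-central extension $p_1:\Lieg\twoheadrightarrow\Lieq$ with $p_1\!\left({\ze}_c^{\Lie}(\Lieg)\right)\subseteq{\sf Z^{\ast}}(\Lieq)$, since then $W\subseteq p_1\!\left({\ze}_c^{\Lie}(\Lieg)\right)\subseteq{\sf Z^{\ast}}(\Lieq)$. Set $S={\sf Z^{\ast}}(\Lieq)$. Because $\Lieq/S$ is $c$-$\Lie$-capable there is a Leibniz algebra $\Lieh$ and an isomorphism $\Lieq/S\cong\Lieh/{\ze}_c^{\Lie}(\Lieh)$; let $\alpha:\Lieq\twoheadrightarrow\Lieq/S$ be the projection and $\beta:\Lieh\twoheadrightarrow\Lieq/S$ the composite $\Lieh\twoheadrightarrow\Lieh/{\ze}_c^{\Lie}(\Lieh)\stackrel{\cong}{\to}\Lieq/S$, so $\ker\beta={\ze}_c^{\Lie}(\Lieh)$. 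I would then form the pullback $\Lieg=\{(q,h)\in\Lieq\times\Lieh\mid\alpha(q)=\beta(h)\}$ with coordinate projections $p_1:\Lieg\twoheadrightarrow\Lieq$ and $p_2:\Lieg\twoheadrightarrow\Lieh$ (both surjective, since $\beta$, resp. $\alpha$, is), noting $\ker p_1=\{0\}\times{\ze}_c^{\Lie}(\Lieh)$ and $\ker p_2=S\times\{0\}$.

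The hard part will be the two facts about this pullback. First, $p_1$ is a $c$-$\Lie$-central extension: since the bracket of $\Lieg$ is componentwise, an iterated $\Lie$-bracket $[\,\cdots[(0,z),g_1]_{\Lie},\dots,g_c]_{\Lie}$ with $z\in{\ze}_c^{\Lie}(\Lieh)$ and $g_i=(q_i,h_i)\in\Lieg$ equals $\bigl(0,\,[\,\cdots[z,h_1]_{\Lie},\dots,h_c]_{\Lie}\bigr)$, and this vanishes because $[z,h_1]_{\Lie}\in{\ze}_{c-1}^{\Lie}(\Lieh)$ and iterating the definition of the upper $\Lie$-central series lowers the index to ${\ze}_0^{\Lie}(\Lieh)=0$ after $c$ steps; hence $\gamma_{c+1}^{\Lie}(\Lieg,\ker p_1)=0$. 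Second, a short induction on $c$ (using that a surjective homomorphism $g$ sends $[g(z),g(a)]_{\Lie}=g([z,a]_{\Lie})$ and that every element of the target is an image) shows surjective homomorphisms carry $c$-$\Lie$-centers into $c$-$\Lie$-centers, so $p_2\!\left({\ze}_c^{\Lie}(\Lieg)\right)\subseteq{\ze}_c^{\Lie}(\Lieh)=\ker\beta$. Consequently, for $(q,h)\in{\ze}_c^{\Lie}(\Lieg)$ we get $\beta(h)=0$, hence $\alpha(q)=\beta(h)=0$, i.e. $q\in S$; therefore $p_1\!\left({\ze}_c^{\Lie}(\Lieg)\right)\subseteq S$, which completes the proof. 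The genuine obstacle is isolating the right extension (the pullback along the projection onto $\Lieq/{\sf Z^{\ast}}(\Lieq)$) and verifying these two centrality statements; the rest is routine.
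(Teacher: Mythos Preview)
Your proof is correct and follows essentially the same approach as the paper's: for ${\sf Z^{\ast}}(\Lieq)\subseteq W$ you both use that $\Lieq/f({\ze}_c^{\Lie}(\Lieg))\cong\Lieg/{\ze}_c^{\Lie}(\Lieg)$ is $c$-$\Lie$-capable, and for $W\subseteq{\sf Z^{\ast}}(\Lieq)$ you both construct the pullback of $\Lieq\twoheadrightarrow\Lieq/{\sf Z^{\ast}}(\Lieq)$ along $\Lieh\twoheadrightarrow\Lieh/{\ze}_c^{\Lie}(\Lieh)\cong\Lieq/{\sf Z^{\ast}}(\Lieq)$ and verify the same two centrality facts. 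If anything, you are slightly more explicit than the paper, which asserts directly that $(q,h)\in{\ze}_c^{\Lie}(\Liem)$ implies $h\in{\ze}_c^{\Lie}(\Lieh)$ without spelling out (as you do) that this follows from surjectivity of the second projection together with the induction showing surjective homomorphisms carry ${\ze}_c^{\Lie}$ into ${\ze}_c^{\Lie}$.
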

\begin{proof}
Let $A = \bigcap \left\{ f \left( {\ze}_{c}^{\Lie}(\Lieg) \right) \mid f : \Lieg \twoheadrightarrow \Lieq \ {\rm is\ a}\  c{\rm -}\Lie{\rm - central\ extension} \right\}$ be. By Definition \ref{capable1} $\Lieq/{\sf Z^{\ast}}(\Lieq)$ is $c$-$\Lie$-capable, i.e. there exists a Leibniz algebra $\Lieh$ such that $0 \to   {\ze}_{c}^{\Lie}(\Lieh) \to \Lieh \stackrel{\theta} \to \Lieq/{\sf Z^{\ast}}(\Lieq) \to 0$. Obviously this sequence is a $c$-$\Lie$-central extension by Proposition \ref{inclusion}.

Consider the Leibniz algebra $\Liem = \left \{ (q, h) \in \Lieq \times \Lieh \mid \theta(h) = q +  {\sf Z^{\ast}}(\Lieq) \right \}$ and let $\phi : \Liem \to \Lieq$ given by $\phi(q,h) = q$. We claim that $\phi$ is a surjective homomorphism with $\ker(\phi) \subseteq {\ze}_{c}^{\Lie}(\Liem)$ and $\phi \left( {\ze}_{c}^{\Lie}(\Liem) \right) \subseteq  {\sf Z^{\ast}}(\Lieq)$.

Indeed, for any $q \in \Lieq$, consider $q +  {\sf Z^{\ast}}(\Lieq)$, then there exists $h \in \Lieh$ such that $\theta(h) = q + {\sf Z^{\ast}}(\Lieq)$. Hence $\phi(q,h)=q$.

On the other hand, $\ker(\phi) = \left \{ (0,h) \in \Lieq \times \Lieh \mid \theta(h) \in {\sf Z^{\ast}}(\Lieq) \right \}$. In order to show that   $\ker(\phi) \subseteq {\ze}_{c}^{\Lie}(\Liem)$, it is enough to show that $\gamma_{c+1}^{\Lie} \left( \Liem, \ker(\phi) \right) = 0$ thanks to Proposition \ref{inclusion}. And this fact holds because for any $(0, h) \in \ker(\phi), (q_i, h_i), 1 \leq i \leq c$, we have  $[[[(0,h),(q_1,h_1)]_{\Lie}, (q_2, h_2)]_{\Lie}, \dots, (q_c,h_c)]_{\Lie} = \left(0, [[[h,h_1]_{\Lie},  h_2]_{\Lie}, \dots, \right.$ $\left. h_c]_{\Lie} \right) \in \left( 0, \ker(\theta) \right) = \left(0, {\ze}_{c}^{\Lie}(\Lieh) \right) \subseteq {\ze}_{c}^{\Lie}(\Lieq) \times {\ze}_{c}^{\Lie}(\Lieh)  = {\ze}_{c}^{\Lie}(\Liem)$.

Finally, for any $(q, h) \in {\ze}_{c}^{\Lie}(\Liem)$, then $\phi(q,h)=q$ with $\theta(h) = q + {\sf Z^{\ast}}(\Lieq)$, but $h \in {\ze}_{c}^{\Lie}(\Lieh) = \ker(\theta)$, hence $q \in {\sf Z^{\ast}}(\Lieq)$.

Therefore we have showed that $A \subseteq {\sf Z^{\ast}}(\Lieq)$.

For the converse inclusion, consider the following commutative diagram associated to some $c$-$\Lie$-central extension $0 \to \Lien  \to \Lieg \stackrel{f} \to \Lieq \to 0$:
\[ \xymatrix{
\Lien \ \ \ar@{>->}[r] \ar@{=}[d] &  {\ze}_{c}^{\Lie}(\Lieg)  \ar@{>>}[r]  \ar@{>->}[d] & {\ze}_{c}^{\Lie}(\Lieq) \ar@{>->}[d]\\
\Lien \ \ \ar@{>->}[r] \ar@{>>}[d] & \Lieg \ar@{>>}[r]^f \ar@{>>}[d] & \Lieq \ar@{>>}[d] \\
0 \ \ \ar@{>->}[r] & \Lieg/{\ze}_{c}^{\Lie}(\Lieg) \ar[r]^{\sim} & \Lieq/{\ze}_{c}^{\Lie}(\Lieq)
} \]
where $\Lien \subseteq {\ze}_{c}^{\Lie}(\Lieg)$ by Proposition \ref{inclusion} and $f\left( {\ze}_{c}^{\Lie}(\Lieg) \right) = {\ze}_{c}^{\Lie}(\Lieq)$ by a standard induction.

Hence $\Lieq/f\left( {\ze}_{c}^{\Lie}(\Lieg) \right) \cong \Lieg/{\ze}_{c}^{\Lie}(\Lieg)$ means that $0 \to {\ze}_{c}^{\Lie}(\Lieg)
\to \Lieg \to \Lieq/f\left( {\ze}_{c}^{\Lie}(\Lieg) \right)$ $\to 0$ is a $c$-$\Lie$-central extension, i.e. $\Lieq/f\left( {\ze}_{c}^{\Lie}(\Lieg) \right)$ is $c$-$\Lie$-capable. Consequently ${\sf Z^{\ast}}(\Lieq) \subseteq f\left( {\ze}_{c}^{\Lie}(\Lieg) \right)$, since ${\sf Z^{\ast}}(\Lieq)$ is the smallest two-sided ideal satisfying this property.

So  ${\sf Z^{\ast}}(\Lieq) \subseteq f\left( {\ze}_{c}^{\Lie}(\Lieg) \right)$ for any $c$-$\Lie$-central extension $f : \Lieg \twoheadrightarrow \Lieq$, then  ${\sf Z^{\ast}}(\Lieq) \subseteq \bigcap \left\{ f\left( {\ze}_{c}^{\Lie}(\Lieg) \right) \mid f : \Lieg \twoheadrightarrow \Lieq \ {\rm is\ a}\  c{\rm -}\Lie{\rm - central\ extension} \right\}$.
\end{proof}

\begin{Le} \label{free presentation}
Let  $0 \to \Lier \to \Lief \overset{\rho} \to \Lieq \to 0$ be a free presentation and  $0 \to \Liem \to \Lieh \overset{\theta} \to \Lieq \to 0$ be  a $c$-$\Lie$-central extension of a Leibniz algebra $\Lieq$, then $\overline{\rho} \left( {\cal Z}_c^{\Lie}(\Lief / \gamma_{c+1}^{\Lie}(\Lief, \Lier)) \right) \subseteq \theta \left( {\ze}_c^{\Lie}(\Lieh) \right)$, where $\overline{\rho}$ is the natural surjective homomorphism induced by $\rho$.
\end{Le}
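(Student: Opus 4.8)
The plan is to exploit the ``universal'' $c$-$\Lie$-central extension of $\Lieq$ attached to the given free presentation together with the comparison map into $\Lieh$ produced by Lemma \ref{lema 1}. First I would apply Lemma \ref{lema 1} to the identity morphism $\mathrm{id}_{\Lieq}:\Lieq\to\Lieq$, regarding $0\to\Liem\to\Lieh\overset{\theta}\to\Lieq\to 0$ as the $c$-$\Lie$-central extension in its hypothesis. This produces a homomorphism $\beta:\Lief/\gamma_{c+1}^{\Lie}(\Lief,\Lier)\to\Lieh$ with $\beta\big(\Lier/\gamma_{c+1}^{\Lie}(\Lief,\Lier)\big)\subseteq\Liem$ and $\theta\circ\beta=\overline{\rho}$; moreover, from the proof of Lemma \ref{lema 1}, $\beta$ is induced by a homomorphism $\omega:\Lief\to\Lieh$ with $\theta\circ\omega=\rho$ and $\omega\big(\gamma_{c+1}^{\Lie}(\Lief,\Lier)\big)=0$. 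Since $\theta\circ\beta=\overline{\rho}$, the statement reduces to proving
\[
\beta\Big({\cal Z}_c^{\Lie}\big(\Lief/\gamma_{c+1}^{\Lie}(\Lief,\Lier)\big)\Big)\subseteq{\ze}_c^{\Lie}(\Lieh),
\]
for then $\overline{\rho}\big({\cal Z}_c^{\Lie}(\Lief/\gamma_{c+1}^{\Lie}(\Lief,\Lier))\big)=\theta\Big(\beta\big({\cal Z}_c^{\Lie}(\Lief/\gamma_{c+1}^{\Lie}(\Lief,\Lier))\big)\Big)\subseteq\theta\big({\ze}_c^{\Lie}(\Lieh)\big)$.

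To prove this inclusion I would use the elementary description of the upper $\Lie$-central series obtained by iterating its definition: $z\in{\ze}_k^{\Lie}(\Lieq)$ if and only if $[\,[\cdots[z,g_1]_{\Lie},\cdots]_{\Lie},g_k]_{\Lie}=0$ for all $g_1,\dots,g_k$. Fix $x\in{\cal Z}_c^{\Lie}(\Lief/\gamma_{c+1}^{\Lie}(\Lief,\Lier))$ with a lift $\tilde{x}\in\Lief$. By definition every $c$-fold iterated $\Lie$-bracket of $\tilde{x}$ against $\Lief$ lies in $\gamma_{c+1}^{\Lie}(\Lief,\Lier)$, so applying $\omega$ and using $\omega(\gamma_{c+1}^{\Lie}(\Lief,\Lier))=0$ shows that every $c$-fold iterated $\Lie$-bracket of $\beta(x)$ against $\omega(\Lief)$ vanishes. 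Now $\omega(\Lief)$ and $\Liem$ together generate $\Lieh$ (since $\theta(\omega(\Lief))=\Lieq$), while $\gamma_{c+1}^{\Lie}(\Lieh,\Liem)=0$ by Proposition \ref{inclusion}; so I would expand a general $c$-fold bracket of $\beta(x)$ against $\Lieh$ by writing each entry as a sum of an entry in $\omega(\Lief)$ and one in $\Liem$. The summand with all entries in $\omega(\Lief)$ vanishes by the previous sentence; a summand in which an entry of $\Liem$ occurs first already dies after the remaining $c-1$ brackets because $\gamma_{c+1}^{\Lie}(\Lieh,\Liem)=0$; and in a summand where $\Liem$ first occurs at position $j\geq 2$ the partial bracket preceding it equals $\beta$ of an element of ${\cal Z}_{c-j+1}^{\Lie}(\Lief/\gamma_{c+1}^{\Lie}(\Lief,\Lier))$, so the remaining portion has the same shape with a strictly smaller parameter and one recurses, running an induction on $c$ while tracking in parallel the descent in the filtration $\gamma_{\bullet}^{\Lie}(\Lieh,\Liem)$.

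The step I expect to be the genuine obstacle is precisely this last bookkeeping. In a summand where an element of $\Liem$ first appears well inside the iterated bracket, neither the $c$-$\Lie$-centrality of $\Liem$ by itself nor the vanishing of the $c$-fold brackets of $\beta(x)$ against $\omega(\Lief)$ by itself suffices, and one must combine the decrease of the $\Lie$-central depth of the intermediate elements $\beta(x')$ with the increase of the $\gamma_{\bullet}^{\Lie}(\Lieh,\Liem)$-degree so that \emph{every} summand is driven to either ${\ze}_0^{\Lie}(\Lieh)=0$ or $\gamma_{c+1}^{\Lie}(\Lieh,\Liem)=0$. Organising the induction so that these two mechanisms are accounted for simultaneously, and checking that $\beta$ applied to a lower $\Lie$-centre element again satisfies the consequence of the $\omega$-argument at the lower level, is the technical heart of the proof; the remaining manipulations are formal and use only the exact sequences and the homomorphism $\beta$ already in hand.
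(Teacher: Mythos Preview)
Your approach coincides with the paper's: use freeness (via Lemma~\ref{lema 1}) to produce $\beta:\Lief/\gamma_{c+1}^{\Lie}(\Lief,\Lier)\to\Lieh$ over $\Lieq$, reduce the statement to the inclusion $\beta\big({\cal Z}_c^{\Lie}(\Lief/\gamma_{c+1}^{\Lie}(\Lief,\Lier))\big)\subseteq{\ze}_c^{\Lie}(\Lieh)$, and exploit the decomposition $\Lieh=\Liem+\Im(\beta)$. The paper stops there, declaring the verification ``an easy task''; you go further and try to supply it by expanding the iterated $\Lie$-bracket, and you correctly isolate the three cases and the genuine obstacle in the $j\ge 2$ summands.

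One caution about your recursion. The step ``the remaining portion has the same shape with a strictly smaller parameter'' does not close as written: if you try to prove $\beta\big({\ze}_{c'}^{\Lie}(\overline{\Lief})\big)\subseteq{\ze}_{c'}^{\Lie}(\Lieh)$ by induction on $c'\le c$, then in the inductive step the summand with $\Liem$ in the \emph{first} slot gives $[\beta(x),m]_{\Lie}\in{\ze}_{c-1}^{\Lie}(\Lieh)$ (because $\Liem\subseteq{\ze}_c^{\Lie}(\Lieh)$, the original $c$, not $c'$), and after the remaining $c'-1$ brackets you only land in ${\ze}_{c-c'}^{\Lie}(\Lieh)$, which is nonzero for $c'<c$. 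Equivalently, on the $\gamma$-side you reach $\gamma_{c'+1}^{\Lie}(\Lieh,\Liem)\neq 0$. So the ``smaller parameter'' sub-lemma is not available, and the induction must indeed be set up so that the drop in the ${\ze}$-depth of $\beta(x')$ and the rise in the $\gamma_\bullet^{\Lie}(\Lieh,\Liem)$-degree are bookkept together (exactly the difficulty you flag). Neither your sketch nor the paper writes this out; apart from that, your proof is the paper's proof.
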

\begin{proof}
Since $\Lief$ is a free Leibniz algebra, then there exists a homomorphism $\beta : \Lief \to \Lieh$ such that $\theta \circ \beta = \rho$. Obviously $\beta(\Lier) \subseteq \Liem$ and $\beta \left( \gamma_{c+1}^{\Lie}(\Lief, \Lier) \right) = 0$. Then we have the following commutative diagram:
\begin{equation} \label{diagram}
 \xymatrix{
& \gamma_{c+1}^{\Lie}(\Lief, \Lier) \ar@{>->}[d] \ar@{=}[r] & \gamma_{c+1}^{\Lie}(\Lief, \Lier) \ar@{>->}[d] \ar@{>>}[r] & 0 \ar@{>->}[d]  \\
& \Lier \ \ \ar@{>>}[d] \ar@{>->}[r] & \Lief \ar@{>>}[r]^{\rho} \ar@{>>}[d]^{pr} \ar@{-->}[ddl]^{\beta}
 & \Lieq \ar@{=}[ddl] \ar@{=}[d]\\
 & \frac{\Lier}{\gamma_{c+1}^{\Lie}(\Lief, \Lier)}\ \ \ar@{>->}[r] & \frac{\Lief}{\gamma_{c+1}^{\Lie}(\Lief, \Lier)} \ar@{>>}[r]^{\ \ \ \ \ \ \overline{\rho}} \ar@{-->}[dl]^{\pi} & \Lieq \ar@{=}[dl] \\
 \Liem \ \ \ar@{>->}[r] & \Lieh \ar@{>>}[r]^{\theta} & \Lieq &
}
\end{equation}
 where $\pi$ is induced by $\beta$. Having in mind that $\Lieh = \ker(\theta) + \Im(\beta)$, then it is an easy task to verify that $\pi \left( {\ze}_c^{\Lie}(\Lief / \gamma_{c+1}^{\Lie}(\Lief, \Lier)) \right) \subseteq {\ze}_c^{\Lie}(\Lieh)$.

Since $\overline{\rho} \circ pr = \rho = \theta \circ \beta = \theta \circ \pi \circ pr$, then $\overline{\rho} \left( {\ze}_c^{\Lie}(\Lief / \gamma_{c+1}^{\Lie}(\Lief, \Lier)) \right) = \theta \left( \pi  \left( {\ze}_c^{\Lie}(\Lief / \gamma_{c+1}^{\Lie}(\Lief, \Lier)) \right) \right)$ $\subseteq \theta \left(  {\ze}_c^{\Lie}(\Lieh) \right)$.
\end{proof}

\begin{Co} \label{equality}
Let  $0 \to \Lier \to \Lief \overset{\rho} \to \Lieq \to 0$ be a free presentation of a Leibniz algebra $\Lieq$, then $\overline{\rho} \left( {\ze}_c^{\Lie}(\Lief / \gamma_{c+1}^{\Lie}(\Lief, \Lier)) \right) = {\sf Z^{\ast}}(\Lieq)$.
\end{Co}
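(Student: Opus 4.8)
The plan is to deduce the equality from two inclusions, each an immediate consequence of the machinery already in place: Proposition \ref{intersection}, which identifies ${\sf Z^{\ast}}(\Lieq)$ with the intersection $\bigcap f\left({\ze}_c^{\Lie}(\Lieg)\right)$ taken over \emph{all} $c$-$\Lie$-central extensions $f : \Lieg \twoheadrightarrow \Lieq$; Lemma \ref{free presentation}; and the observation from Example \ref{example exact sequence} {\it (b)} that the canonical sequence $0 \to \Lier/\gamma_{c+1}^{\Lie}(\Lief, \Lier) \to \Lief/\gamma_{c+1}^{\Lie}(\Lief, \Lier) \overset{\overline{\rho}}\to \Lieq \to 0$ is itself a $c$-$\Lie$-central extension.

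First I would prove $\overline{\rho}\left({\ze}_c^{\Lie}(\Lief/\gamma_{c+1}^{\Lie}(\Lief, \Lier))\right) \subseteq {\sf Z^{\ast}}(\Lieq)$. Let $0 \to \Liem \to \Lieh \overset{\theta}\to \Lieq \to 0$ be an arbitrary $c$-$\Lie$-central extension. Lemma \ref{free presentation} (applied with the fixed free presentation $0 \to \Lier \to \Lief \overset{\rho}\to \Lieq \to 0$ and this choice of $\theta$) gives $\overline{\rho}\left({\ze}_c^{\Lie}(\Lief/\gamma_{c+1}^{\Lie}(\Lief, \Lier))\right) \subseteq \theta\left({\ze}_c^{\Lie}(\Lieh)\right)$. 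Since this holds for every such $\theta$, the left-hand side lies in $\bigcap \theta\left({\ze}_c^{\Lie}(\Lieh)\right)$, which by Proposition \ref{intersection} is precisely ${\sf Z^{\ast}}(\Lieq)$.

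For the reverse inclusion I would apply Proposition \ref{intersection} to the one particular $c$-$\Lie$-central extension singled out above, namely $\overline{\rho} : \Lief/\gamma_{c+1}^{\Lie}(\Lief, \Lier) \twoheadrightarrow \Lieq$ (this map is surjective with kernel $\Lier/\gamma_{c+1}^{\Lie}(\Lief, \Lier)$, so it genuinely belongs to the family ranged over in that proposition). Since ${\sf Z^{\ast}}(\Lieq) \subseteq f\left({\ze}_c^{\Lie}(\Lieg)\right)$ for \emph{every} member of the family, in particular ${\sf Z^{\ast}}(\Lieq) \subseteq \overline{\rho}\left({\ze}_c^{\Lie}(\Lief/\gamma_{c+1}^{\Lie}(\Lief, \Lier))\right)$. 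Combining the two inclusions yields the corollary.

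I do not anticipate a real obstacle: all the substantive work has been absorbed into Lemma \ref{free presentation} (the existence of the lifting $\beta$ using freeness of $\Lief$, and the resulting containment $\pi\left({\ze}_c^{\Lie}(\Lief/\gamma_{c+1}^{\Lie}(\Lief, \Lier))\right) \subseteq {\ze}_c^{\Lie}(\Lieh)$) and into Proposition \ref{intersection}. The only care needed is the bookkeeping: making sure the quantifier in Proposition \ref{intersection} is used in the strong direction for the $\subseteq$-part (over all extensions) and in the weak direction for the $\supseteq$-part (one specific extension).
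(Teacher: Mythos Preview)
Your proposal is correct and follows essentially the same approach as the paper's own proof: one inclusion via Lemma \ref{free presentation} together with the intersection description in Proposition \ref{intersection}, and the other inclusion by specializing Proposition \ref{intersection} to the canonical $c$-$\Lie$-central extension $\overline{\rho}$. The only cosmetic difference is the order in which the two inclusions are established.
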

\begin{proof}
From the $c$-$\Lie$-central extension $0 \to \frac{\Lier}{\gamma_{c+1}^{\Lie}(\Lief, \Lier)} \to \frac{\Lief}{\gamma_{c+1}^{\Lie}(\Lief, \Lier)} \overset{\overline{\rho}} \to  \Lieq \to 0$ in Example \ref{Example3.14} {\it (b)}, it follows that ${\sf Z^{\ast}}(\Lieq) \subseteq \overline{\rho} \left( {\ze}_c^{\Lie}(\Lief / \gamma_{c+1}^{\Lie}(\Lief, \Lier)) \right)$ by Proposition \ref{intersection}.

By Lemma \ref{free presentation},  $\overline{\rho} \left( {\ze}_c^{\Lie}(\Lief / \gamma_{c+1}^{\Lie}(\Lief, \Lier)) \right) \subseteq \theta \left( {\ze}_c^{\Lie}(\Lieh) \right)$, for any $c$-$\Lie$-central extension $\theta : \Lieh \twoheadrightarrow \Lieq$, then $\overline{\rho} \left( {\ze}_c^{\Lie}(\Lief / \gamma_{c+1}^{\Lie}(\Lief, \Lier)) \right) \subseteq \bigcap \theta \left( {\ze}_c^{\Lie}(\Lieh) \right) = {\sf Z^{\ast}}(\Lieq)$.
\end{proof}

\begin{Co} \label{capable}
${\sf Z^{\ast}}(\Lieq)=0$ if and only if $\Lieq$ is a $c$-$\Lie$-capable Leibniz algebra.
\end{Co}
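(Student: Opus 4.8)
The plan is to derive this immediately from Corollary~\ref{equality}, Proposition~\ref{inclusion} and Proposition~\ref{intersection}, so the argument should be very short in both directions.

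First I would treat the implication ``${\sf Z^{\ast}}(\Lieq)=0 \Rightarrow \Lieq$ is $c$-$\Lie$-capable''. Fix a free presentation $0 \to \Lier \to \Lief \overset{\rho}\to \Lieq \to 0$ and set $\Lieh := \Lief/\gamma_{c+1}^{\Lie}(\Lief,\Lier)$. By Example~\ref{Example3.14}~{\it (b)} the induced epimorphism $\overline{\rho} : \Lieh \twoheadrightarrow \Lieq$ is a $c$-$\Lie$-central extension with kernel $\Lier/\gamma_{c+1}^{\Lie}(\Lief,\Lier)$, so Proposition~\ref{inclusion} gives $\Lier/\gamma_{c+1}^{\Lie}(\Lief,\Lier) \subseteq {\ze}_c^{\Lie}(\Lieh)$. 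On the other hand Corollary~\ref{equality} yields $\overline{\rho}\!\left({\ze}_c^{\Lie}(\Lieh)\right) = {\sf Z^{\ast}}(\Lieq) = 0$, i.e. ${\ze}_c^{\Lie}(\Lieh) \subseteq \ker(\overline{\rho}) = \Lier/\gamma_{c+1}^{\Lie}(\Lief,\Lier)$. Combining the two inclusions gives ${\ze}_c^{\Lie}(\Lieh) = \Lier/\gamma_{c+1}^{\Lie}(\Lief,\Lier)$, hence $\Lieq \cong \Lieh/\ker(\overline{\rho}) = \Lieh/{\ze}_c^{\Lie}(\Lieh)$, which shows that $\Lieq$ is $c$-$\Lie$-capable.

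For the converse, suppose $\Lieq$ is $c$-$\Lie$-capable, so $\Lieq \cong \Lieh/{\ze}_c^{\Lie}(\Lieh)$ for some Leibniz algebra $\Lieh$; let $f : \Lieh \twoheadrightarrow \Lieq$ be the corresponding epimorphism, whose kernel is exactly ${\ze}_c^{\Lie}(\Lieh)$. Since $\ker(f) = {\ze}_c^{\Lie}(\Lieh) \subseteq {\ze}_c^{\Lie}(\Lieh)$, Proposition~\ref{inclusion} shows that $f$ is a $c$-$\Lie$-central extension of $\Lieq$. Then Proposition~\ref{intersection} gives ${\sf Z^{\ast}}(\Lieq) \subseteq f\!\left({\ze}_c^{\Lie}(\Lieh)\right) = f(\ker f) = 0$, so ${\sf Z^{\ast}}(\Lieq) = 0$.

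There is essentially no obstacle here: both implications are formal consequences of the material already established. The only point requiring a moment's care is the observation that the epimorphism arising from $c$-$\Lie$-capability is genuinely a $c$-$\Lie$-central extension (so that Propositions~\ref{inclusion} and~\ref{intersection} apply), and this is immediate because its kernel coincides with the $c$-th term of the upper $\Lie$-central series of the total algebra.
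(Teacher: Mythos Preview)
Your proof is correct and follows essentially the same route as the paper: for one direction you use Corollary~\ref{equality} together with Proposition~\ref{inclusion} applied to the $c$-$\Lie$-central extension $\overline{\rho}$ of Example~\ref{Example3.14}~{\it (b)} to identify ${\ze}_c^{\Lie}(\Lieh)$ with $\ker(\overline{\rho})$, and for the other you invoke Proposition~\ref{intersection} on the $c$-$\Lie$-central extension $f:\Lieh\twoheadrightarrow\Lieq$ with kernel ${\ze}_c^{\Lie}(\Lieh)$. The only cosmetic difference is that the paper treats the implications in the opposite order.
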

\begin{proof}
If $\Lieq$ is a $c$-$\Lie$-capable Leibniz algebra, then there exists a $c$-$\Lie$-central extension  $0 \to {\ze}_{c}^{\Lie}(\Lieh)
\to \Lieh \stackrel{f}\to \Lieq \to 0$; now Proposition \ref{intersection} implies that ${\sf Z^{\ast}}(\Lieq) \subseteq f\left( {\ze}_{c}^{\Lie}(\Lieh) \right) = 0$.

Conversely, if ${\sf Z^{\ast}}(\Lieq)=0$, for any free presentation $0 \to \Lier \to \Lief \overset{\rho} \to \Lieq \to 0$, Corollary \ref{equality} implies that  $\overline{\rho} \left( {\ze}_c^{\Lie}(\Lief / \gamma_{c+1}^{\Lie}(\Lief, \Lier)) \right) =0$, i.e. ${\ze}_c^{\Lie}(\Lief / \gamma_{c+1}^{\Lie}(\Lief, \Lier)) \subseteq \ker(\overline{\rho}) = \Lier / \gamma_{c+1}^{\Lie}(\Lief, \Lier)$. Moreover  $0 \to \frac{\Lier}{\gamma_{c+1}^{\Lie}(\Lief, \Lier)} \to \frac{\Lief}{\gamma_{c+1}^{\Lie}(\Lief, \Lier)} \overset{\overline{\rho}} \to  \Lieq \to 0$ is a $c$-$\Lie$-central extension, then $ \Lier / \gamma_{c+1}^{\Lie}(\Lief, \Lier) \subseteq {\ze}_c^{\Lie}(\Lief / \gamma_{c+1}^{\Lie}(\Lief, \Lier))$ by Proposition \ref{inclusion}.

Thus $0 \to {\ze}_c^{\Lie}(\Lief / \gamma_{c+1}^{\Lie}(\Lief, \Lier)) \to \frac{\Lief}{\gamma_{c+1}^{\Lie}(\Lief, \Lier)} \overset{\overline{\rho}} \to  \Lieq \to 0$ is a $c$-$\Lie$-central extension, i.e. $\Lieq$ is $c$-$\Lie$-capable.
\end{proof}

\begin{Th}
 Let $0 \to \Liem \to \Lieh \overset{\psi} \to \Lieq \to 0$ be a $c$-$\Lie$-stem cover of a Leibniz algebra $\Lieq$. Then $\psi \left( {\ze}_c^{\Lie}(\Lieh) \right) = {\sf Z^{\ast}}(\Lieq)$.
\end{Th}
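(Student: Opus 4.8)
The plan is to realise the stem cover concretely through a free presentation of $\Lieq$ and thereby reduce the statement to one equality of ideals in the free Leibniz algebra. Fix a free presentation $0\to\Lier\to\Lief\overset{\rho}\to\Lieq\to0$. By Lemma \ref{equivalence property} there is a two-sided ideal $\Lies$ of $\Lief$ with $\gamma_{c+1}^{\Lie}(\Lief,\Lier)\subseteq\Lies\subseteq\Lier$, and the isomorphism $\Lieh\cong\Lief/\Lies$ furnished by the proof of that lemma intertwines $\psi$ with the map $\Lief/\Lies\to\Lieq$, $f+\Lies\mapsto\rho(f)$ (it is induced by a lift $\beta$ of $\mathrm{id}_{\Lieq}$ with $\psi\circ\beta=\overline{\rho}$); I would record this compatibility at the outset, since from now on ``applying $\psi$'' amounts to ``applying $\rho$ to a representative''. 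In particular $0\to\Lies\to\Lief\to\Lieh\to0$ is a free presentation of $\Lieh$, and the composite $\Lief\to\Lieh\overset{\psi}\to\Lieq$ through it equals $\rho$, hence has kernel $\Lier$.

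The heart of the argument is the identity
\begin{equation}\label{stemcoverkeyid}
\Lies\cap\gamma_{c+1}^{\Lie}(\Lief)=\gamma_{c+1}^{\Lie}(\Lief,\Lier).
\end{equation}
Here ``$\supseteq$'' is immediate, since $\gamma_{c+1}^{\Lie}(\Lief,\Lier)$ lies both in $\Lies$ and in $\gamma_{c+1}^{\Lie}(\Lief,\Lief)=\gamma_{c+1}^{\Lie}(\Lief)$. For ``$\subseteq$'' I would apply Proposition \ref{exact sequences}(a) to the extension $0\to\Liem\to\Lieh\overset{\psi}\to\Lieq\to0$, using the free presentation $0\to\Lies\to\Lief\to\Lieh\to0$ just described (so the two kernels appearing in that proposition are $\Lies=\ker(\Lief\to\Lieh)$ and $\Lier=\ker(\Lief\to\Lieq)$); its initial segment is the exact sequence
\[
0\to\frac{\Lies\cap\gamma_{c+1}^{\Lie}(\Lief,\Lier)}{\gamma_{c+1}^{\Lie}(\Lief,\Lies)}\to{\cal M}_{\Lie}^{(c)}(\Lieh)\overset{\Pi}\to{\cal M}_{\Lie}^{(c)}(\Lieq)\to\cdots,
\]
in which $\Pi$ is the map on $c$-nilpotent Schur $\Lie$-multipliers induced by $\psi$. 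Since $\psi$ is a stem \emph{cover}, $\Pi=0$ by Proposition \ref{stem cover}, whence ${\cal M}_{\Lie}^{(c)}(\Lieh)=\ker\Pi=\frac{\Lies\cap\gamma_{c+1}^{\Lie}(\Lief,\Lier)}{\gamma_{c+1}^{\Lie}(\Lief,\Lies)}$; comparing with the description ${\cal M}_{\Lie}^{(c)}(\Lieh)=\frac{\Lies\cap\gamma_{c+1}^{\Lie}(\Lief)}{\gamma_{c+1}^{\Lie}(\Lief,\Lies)}$ from the same presentation gives $\Lies\cap\gamma_{c+1}^{\Lie}(\Lief)=\Lies\cap\gamma_{c+1}^{\Lie}(\Lief,\Lier)=\gamma_{c+1}^{\Lie}(\Lief,\Lier)$, the final equality because $\gamma_{c+1}^{\Lie}(\Lief,\Lier)\subseteq\Lies$. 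This proves (\ref{stemcoverkeyid}).

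Granting (\ref{stemcoverkeyid}), the conclusion drops out by unwinding the upper $\Lie$-central series. An element $f+\Lies$ of $\Lief/\Lies$ lies in ${\ze}_c^{\Lie}(\Lief/\Lies)$ exactly when every iterated $\Lie$-commutator $[[[f,f_1]_{\Lie},f_2]_{\Lie},\dots,f_c]_{\Lie}$, with $f_1,\dots,f_c\in\Lief$, lies in $\Lies$ (which follows by induction from the definition of the upper $\Lie$-central series), and such commutators always lie in $\gamma_{c+1}^{\Lie}(\Lief)$; so by (\ref{stemcoverkeyid}) this condition is equivalent to every such commutator lying in $\gamma_{c+1}^{\Lie}(\Lief,\Lier)$, i.e. to $f+\gamma_{c+1}^{\Lie}(\Lief,\Lier)\in{\ze}_c^{\Lie}(\Lief/\gamma_{c+1}^{\Lie}(\Lief,\Lier))$. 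Applying $\psi$ on the one side and $\overline{\rho}:\Lief/\gamma_{c+1}^{\Lie}(\Lief,\Lier)\to\Lieq$ on the other --- both acting on representatives by $f\mapsto\rho(f)$ --- yields
\[
\psi\left({\ze}_c^{\Lie}(\Lieh)\right)=\overline{\rho}\left({\ze}_c^{\Lie}\left(\Lief/\gamma_{c+1}^{\Lie}(\Lief,\Lier)\right)\right),
\]
and the right-hand side is ${\sf Z^{\ast}}(\Lieq)$ by Corollary \ref{equality}.

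The step I expect to be the genuine obstacle is the identity (\ref{stemcoverkeyid}): this is precisely where one must use that $\psi$ is a stem cover, not merely a stem extension. For an arbitrary $c$-$\Lie$-central extension one obtains only the inclusion ${\sf Z^{\ast}}(\Lieq)\subseteq\psi({\ze}_c^{\Lie}(\Lieh))$ --- from Proposition \ref{intersection}, or from Lemma \ref{free presentation} together with Corollary \ref{equality} --- whereas the abstract decomposition $\Lier/\gamma_{c+1}^{\Lie}(\Lief,\Lier)\cong{\cal M}_{\Lie}^{(c)}(\Lieq)\oplus\Lies/\gamma_{c+1}^{\Lie}(\Lief,\Lier)$ of Lemma \ref{equivalence property}(b) does not on its own force the summand $\Lies/\gamma_{c+1}^{\Lie}(\Lief,\Lier)$ to intersect $\gamma_{c+1}^{\Lie}(\Lief)/\gamma_{c+1}^{\Lie}(\Lief,\Lier)$ trivially; it is the vanishing of the $\psi$-induced map ${\cal M}_{\Lie}^{(c)}(\Lieh)\to{\cal M}_{\Lie}^{(c)}(\Lieq)$ (Proposition \ref{stem cover}) that supplies this, upgrading the inclusion to the desired equality.
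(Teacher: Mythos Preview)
Your proof is correct and follows essentially the same route as the paper: realise $\Lieh\cong\Lief/\Lies$ via Lemma~\ref{equivalence property}, establish $\Lies\cap\gamma_{c+1}^{\Lie}(\Lief)=\gamma_{c+1}^{\Lie}(\Lief,\Lier)$, deduce that ${\ze}_c^{\Lie}(\Lief/\Lies)$ and ${\ze}_c^{\Lie}\bigl(\Lief/\gamma_{c+1}^{\Lie}(\Lief,\Lier)\bigr)$ have the same preimage in $\Lief$, and conclude via Corollary~\ref{equality}. The only difference is in how the key identity is obtained: the paper reads $\Lies\cap\gamma_{c+1}^{\Lie}(\Lief)\subseteq\gamma_{c+1}^{\Lie}(\Lief,\Lier)$ directly off the internal direct-sum decomposition in Lemma~\ref{equivalence property}(b), whereas you derive it from the vanishing of ${\cal M}_{\Lie}^{(c)}(\Lieh)\to{\cal M}_{\Lie}^{(c)}(\Lieq)$ (Proposition~\ref{stem cover}) fed into Proposition~\ref{exact sequences}(a)---a slightly more explicit justification that does not rely on interpreting the isomorphism of Lemma~\ref{equivalence property}(b) as an internal splitting against the concrete copy of ${\cal M}_{\Lie}^{(c)}(\Lieq)$.
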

\begin{proof}
Let  $0 \to \Lier \to \Lief \overset{\rho} \to \Lieq \to 0$ be a free presentation of $\Lieq.$ Then by Lemma \ref{equivalence property}, there exists a two-sided ideal $\Lies$ of $\Lief$ such that  $\Lieh \cong \Lief/\Lies$ and $\Liem \cong \Lier/\Lies$, and
$\Lier/\gamma_{c+1}^{\Lie}(\Lief, \Lier) \cong {\cal M}_{\Lie}^{(c)} (\Lieq) \oplus \Lies/\gamma_{c+1}^{\Lie}(\Lief, \Lier)$.

 Now let $\Liet$ be a two-sided ideal of $\Lief$ such that  ${\cal Z}_{c}^{\Lie}( \Lieh)={\cal Z}_{c}^{\Lie}( {\Lief}/{\Lies})= {\Liet}/{\Lies}.$ We claim that  ${\cal Z}_{c}^{\Lie}\left( \frac{\Lief}{\gamma_{c+1}^{\Lie}(\Lief, \Lier)} \right )=\frac{\Liet}{\gamma_{c+1}^{\Lie}(\Lief, \Lier)}.$

  Indeed, let $x+\gamma_{c+1}^{\Lie}(\Lief, \Lier)\in {\cal Z}_{c}^{\Lie}( {\Lief}/{\gamma_{c+1}^{\Lie}(\Lief, \Lier)}).$ Then  for all $f_i\in\Lief, 1\leq i\leq c,$  we have  $[[[x,f_1]_{\Lie},f_2]_{\Lie}, \dots, f_c]_{\Lie}\in \gamma_{c+1}^{\Lie}(\Lief, \Lier)\subseteq\Lies,$ implying that  $x+\Lies\in {\cal Z}_{c}^{\Lie}( {\Lief}/{\Lies})=\Liet/ \Lies.$ So  $x\in\Liet$, and thus $x+\gamma_{c+1}^{\Lie}(\Lief, \Lier)\in{\Liet}/{\gamma_{c+1}^{\Lie}(\Lief, \Lier)}.$

 Conversely,  as ${\cal Z}_{c}^{\Lie}( {\Lief}/{\Lies})= {\Liet}/{\Lies},$ we have  for all $t\in\Liet$ and  $f_i\in\Lief, 1\leq i\leq c,$  that  $[[[t+\Lies,f_1+\Lies]_{\Lie},f_2+\Lies]_{\Lie}, \dots, f_c+\Lies]_{\Lie}= \overline{0}$, then $[[[t,f_1]_{\Lie},f_2]_{\Lie}, \dots, f_c]_{\Lie}\in\Lies.$ So $\gamma_{c+1}^{\Lie}(\Lief, \Liet)\subseteq\Lies\bigcap\gamma_{c+1}^{\Lie}(\Lief)\subseteq \gamma_{c+1}^{\Lie}(\Lief, \Lier),$ implying that $\frac{\Liet}{\gamma_{c+1}^{\Lie}(\Lief, \Lier)}\subseteq{\cal Z}_{c}^{\Lie} \left( \frac{\Lief}{\gamma_{c+1}^{\Lie}(\Lief, \Lier)} \right).$

 We now have (here we use similar notations to diagram (\ref{diagram})) $$\psi \left( {\ze}_c^{\Lie}(\Lieh) \right) = \psi \left( \Liet/\Lies \right) =\rho(\Liet)=\bar{\rho} \left( \Liet/\gamma_{c+1}^{\Lie}(\Lief, \Lier) \right)=\bar{\rho} \left( {\cal Z}_{c}^{\Lie} \left( \frac{\Lief}{\gamma_{c+1}^{\Lie}(\Lief, \Lier)} \right) \right)    ={\sf Z^{\ast}}(\Lieq)$$
  The last equality holds thanks to Corollary \ref{equality}.
\end{proof}

\begin{Th} \label{equivalent conditions}
Let $\Lien$ be a $c$-$\Lie$-central two-sided ideal of a Leibniz algebra $\Lieq$. Then the following statements are equivalent:
\begin{enumerate}
\item[(a)] $\Lien \cap \gamma_{c+1}^{\Lie}(\Lieq) \cong \frac{{\cal M}_{\Lie}^{(c)} \left(\Lieq / \Lien \right)}{{\cal M}_{\Lie}^{(c)} (\Lieq)}$.
    \item[(b)] $\Lien \subseteq {\sf Z^{\ast}}(\Lieq)$.
    \item[(c)] The natural map ${\cal M}_{\Lie}^{(c)} (\Lieq) \to {\cal M}_{\Lie}^{(c)} (\Lieq / \Lien)$ is injective.
\end{enumerate}
\end{Th}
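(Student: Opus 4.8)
The plan is to fix a free presentation $0 \to \Lier \to \Lief \overset{\rho}\to \Lieq \to 0$, set $\Lies = \ker(\pi\circ\rho)$ for $\pi : \Lieq \twoheadrightarrow \Lieq/\Lien$ the canonical projection (so $\Lier \subseteq \Lies$ and $\Lien \cong \Lies/\Lier$), and push everything through the natural map $\Pi : {\cal M}_{\Lie}^{(c)}(\Lieq) \to {\cal M}_{\Lie}^{(c)}(\Lieq/\Lien)$ from the proof of Proposition \ref{exact sequences}, whose kernel is $\frac{\Lier\cap\gamma_{c+1}^{\Lie}(\Lief,\Lies)}{\gamma_{c+1}^{\Lie}(\Lief,\Lier)}$. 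Since $\Lien$ is $c$-$\Lie$-central we have $\gamma_{c+1}^{\Lie}(\Lieq,\Lien)=0$, so Proposition \ref{exact sequences}(a) becomes the four-term exact sequence
\[
0 \to \ker(\Pi) \to {\cal M}_{\Lie}^{(c)}(\Lieq) \overset{\Pi}\to {\cal M}_{\Lie}^{(c)}(\Lieq/\Lien) \to \Lien\cap\gamma_{c+1}^{\Lie}(\Lieq) \to 0 .
\]
Moreover $\rho$ maps $\Lies$ onto $\Lien$, so by functoriality of the lower $\Lie$-central series $\rho\big(\gamma_{c+1}^{\Lie}(\Lief,\Lies)\big)\subseteq\gamma_{c+1}^{\Lie}(\Lieq,\Lien)=0$, i.e.\ $\gamma_{c+1}^{\Lie}(\Lief,\Lies)\subseteq\Lier$; hence $\ker(\Pi)=\frac{\gamma_{c+1}^{\Lie}(\Lief,\Lies)}{\gamma_{c+1}^{\Lie}(\Lief,\Lier)}$, and (c) is equivalent to the single inclusion $(\ast)$: $\gamma_{c+1}^{\Lie}(\Lief,\Lies)\subseteq\gamma_{c+1}^{\Lie}(\Lief,\Lier)$.

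The equivalence (a)$\Leftrightarrow$(c) is then formal: the four-term sequence always yields ${\sf Coker}(\Pi)\cong\Lien\cap\gamma_{c+1}^{\Lie}(\Lieq)$, so the quotient ${\cal M}_{\Lie}^{(c)}(\Lieq/\Lien)/{\cal M}_{\Lie}^{(c)}(\Lieq)$ of (a), understood through $\Pi$, is genuinely formed exactly when $\ker(\Pi)=0$, and in that case the four-term sequence degenerates to the short exact sequence realizing the isomorphism of (a). For (b)$\Leftrightarrow$(c) I would invoke Corollary \ref{equality}: writing $\overline{\Lief}=\Lief/\gamma_{c+1}^{\Lie}(\Lief,\Lier)$ with quotient map $pr$ and overlines for images, the extension $0\to\overline{\Lier}\to\overline{\Lief}\overset{\overline{\rho}}\to\Lieq\to0$ is $c$-$\Lie$-central (Example \ref{Example3.14}(b)), so $\overline{\Lier}\subseteq{\ze}_c^{\Lie}(\overline{\Lief})$ by Proposition \ref{inclusion}, whence ${\sf Z^{\ast}}(\Lieq)=\overline{\rho}\big({\ze}_c^{\Lie}(\overline{\Lief})\big)={\ze}_c^{\Lie}(\overline{\Lief})/\overline{\Lier}$ as a two-sided ideal of $\Lieq=\overline{\Lief}/\overline{\Lier}$. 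Since $\Lien=\overline{\rho}(\overline{\Lies})$ and $\ker\overline{\rho}=\overline{\Lier}$ lies in both $\overline{\Lies}$ and ${\ze}_c^{\Lie}(\overline{\Lief})$, the condition $\Lien\subseteq{\sf Z^{\ast}}(\Lieq)$ is equivalent to $\overline{\Lies}\subseteq{\ze}_c^{\Lie}(\overline{\Lief})$, which by Proposition \ref{inclusion} (applied to $0\to\overline{\Lies}\to\overline{\Lief}\to\overline{\Lief}/\overline{\Lies}\to0$) means $\gamma_{c+1}^{\Lie}(\overline{\Lief},\overline{\Lies})=0$; and since $pr\big(\gamma_{c+1}^{\Lie}(\Lief,\Lies)\big)=\gamma_{c+1}^{\Lie}(\overline{\Lief},\overline{\Lies})$ with $\ker(pr)=\gamma_{c+1}^{\Lie}(\Lief,\Lier)$, this is precisely $(\ast)$. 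Thus (a)$\Leftrightarrow$(c)$\Leftrightarrow$(b).

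I expect the substance to be ideal bookkeeping rather than anything deep: the one step that must be handled carefully is establishing $\gamma_{c+1}^{\Lie}(\Lief,\Lies)\subseteq\Lier$ — so that the ``$\cap\,\Lier$'' in $\ker(\Pi)$ becomes vacuous and all three conditions collapse onto $(\ast)$ — together with the correct transport of ${\ze}_c^{\Lie}(\overline{\Lief})$ between $\overline{\Lief}$, $\Lief$ and $\Lieq$. One also has to be mindful that the quotient in (a) is taken through the a priori non-injective map $\Pi$. Once Proposition \ref{exact sequences}, Proposition \ref{inclusion} and Corollary \ref{equality} are available, no further difficulty should arise.
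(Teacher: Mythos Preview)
Your proposal is correct and follows essentially the same route as the paper's proof: the equivalence $(a)\Leftrightarrow(c)$ comes straight from the four-term exact sequence~(\ref{four term}) obtained from Proposition~\ref{exact sequences} under the hypothesis $\gamma_{c+1}^{\Lie}(\Lieq,\Lien)=0$, and $(b)\Leftrightarrow(c)$ is reduced to the single condition $\gamma_{c+1}^{\Lie}(\Lief,\Lies)=\gamma_{c+1}^{\Lie}(\Lief,\Lier)$ and then handled via Corollary~\ref{equality} together with Proposition~\ref{inclusion}, passing through $\overline{\Lief}=\Lief/\gamma_{c+1}^{\Lie}(\Lief,\Lier)$. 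You make explicit two points the paper leaves implicit---namely that $\gamma_{c+1}^{\Lie}(\Lief,\Lies)\subseteq\Lier$ (so the ``$\cap\,\Lier$'' in $\ker(\Pi)$ is vacuous) and the correspondence $\overline{\Lies}\subseteq{\ze}_c^{\Lie}(\overline{\Lief})\Leftrightarrow\Lien\subseteq{\sf Z^{\ast}}(\Lieq)$ via $\ker\overline{\rho}=\overline{\Lier}\subseteq{\ze}_c^{\Lie}(\overline{\Lief})$---but these are exactly the ideal-bookkeeping steps the paper compresses.
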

\begin{proof}
The equivalence between statements {\it (a)} and {\it (c)} directly follows from Proposition \ref{exact sequences} {\it (c)}.

For the equivalence between statements {\it (b)} and {\it (c)}, consider the  free presentations in diagram (\ref{free present diagr}).
By exact sequence (\ref{four term}), we only need to prove that $\gamma_{c+1}^{\Lie}(\Lief, \Lies) = \gamma_{c+1}^{\Lie}(\Lief, \Lier)$ if and only if $\Lien \subseteq {\sf Z^{\ast}}(\Lieq)$.

Set $\overline{\Lief} = \frac{\Lief}{\gamma_{c+1}^{\Lie}(\Lief, \Lier)}, \overline{\Lier} = \frac{\Lier}{\gamma_{c+1}^{\Lie}(\Lief, \Lier)}, \overline{\Lies} = \frac{\Lies}{\gamma_{c+1}^{\Lie}(\Lief, \Lier)}$, then $\gamma_{c+1}^{\Lie}(\Lief, \Lies) = \gamma_{c+1}^{\Lie}(\Lief, \Lier)$ if and only if $\overline{\Lies} \subseteq  {\ze}_c^{\Lie}(\overline{\Lief})$.

By Corollary \ref{equality}, ${\sf Z^{\ast}}(\Lieq) = \overline{\rho} \left(  {\ze}_c^{\Lie}(\overline{\Lief}) \right)$, consequently, we obtain that $\overline{\rho}(\overline{\Lies}) \subseteq {\sf Z^{\ast}}(\Lieq)$ if and only if $\overline{\Lies} \subseteq {\ze}_c^{\Lie}(\overline{\Lief})$, but $\overline{\rho}(\overline{\Lies}) = \rho(\Lies) = \Lien$, which completes the proof.
\end{proof}

\begin{Co}
Any Leibniz algebra $\Lieq$ is $c$-$\Lie$-capable if and only if the natural map ${\cal M}_{\Lie}^{(c)} (\Lieq) \to {\cal M}_{\Lie}^{(c)} (\Lieq / \langle x \rangle)$ has non-trivial kernel for all non-zero element $x \in {\cal Z}_c^{\Lie}(\Lieq)$.
\end{Co}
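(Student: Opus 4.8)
The plan is to derive this directly from Corollary \ref{capable} together with Theorem \ref{equivalent conditions}, so that essentially no new computation is needed. First I would recall that, by Corollary \ref{capable}, $\Lieq$ is $c$-$\Lie$-capable if and only if ${\sf Z^{\ast}}(\Lieq)=0$. Thus it suffices to prove that ${\sf Z^{\ast}}(\Lieq)=0$ if and only if, for every non-zero $x\in{\cal Z}_c^{\Lie}(\Lieq)$, the natural map ${\cal M}_{\Lie}^{(c)}(\Lieq)\to{\cal M}_{\Lie}^{(c)}(\Lieq/\langle x\rangle)$ has non-trivial kernel.

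The key observation is that for any $x\in{\cal Z}_c^{\Lie}(\Lieq)$ the two-sided ideal $\langle x\rangle$ generated by $x$ is contained in ${\cal Z}_c^{\Lie}(\Lieq)$, because ${\cal Z}_c^{\Lie}(\Lieq)$ is itself a two-sided ideal; hence, by Proposition \ref{inclusion}, $\langle x\rangle$ is a $c$-$\Lie$-central two-sided ideal of $\Lieq$, and Theorem \ref{equivalent conditions} applies with $\Lien=\langle x\rangle$. It yields that the natural map ${\cal M}_{\Lie}^{(c)}(\Lieq)\to{\cal M}_{\Lie}^{(c)}(\Lieq/\langle x\rangle)$ is injective if and only if $\langle x\rangle\subseteq{\sf Z^{\ast}}(\Lieq)$; equivalently, it has non-trivial kernel precisely when $\langle x\rangle\not\subseteq{\sf Z^{\ast}}(\Lieq)$.

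With this in hand the two implications are immediate. If $\Lieq$ is $c$-$\Lie$-capable then ${\sf Z^{\ast}}(\Lieq)=0$, so for every non-zero $x$ we have $\langle x\rangle\neq 0$ and hence $\langle x\rangle\not\subseteq{\sf Z^{\ast}}(\Lieq)$, giving non-trivial kernel. Conversely, arguing by contraposition, if $\Lieq$ is not $c$-$\Lie$-capable then ${\sf Z^{\ast}}(\Lieq)\neq 0$; since ${\sf Z^{\ast}}(\Lieq)\subseteq{\cal Z}_c^{\Lie}(\Lieq)$ by the Proposition following Definition \ref{capable1}, we may pick a non-zero $x\in{\sf Z^{\ast}}(\Lieq)$, and then $\langle x\rangle\subseteq{\sf Z^{\ast}}(\Lieq)$ forces the map ${\cal M}_{\Lie}^{(c)}(\Lieq)\to{\cal M}_{\Lie}^{(c)}(\Lieq/\langle x\rangle)$ to be injective, contradicting the hypothesis. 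I do not anticipate a genuine obstacle here: the only points needing care are checking that $\langle x\rangle$ is $c$-$\Lie$-central, so that Theorem \ref{equivalent conditions} is applicable, and that ${\sf Z^{\ast}}(\Lieq)$ sits inside ${\cal Z}_c^{\Lie}(\Lieq)$, both of which are recorded earlier in the paper.
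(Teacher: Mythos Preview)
Your proposal is correct and follows essentially the same approach as the paper's proof: both rely on Corollary \ref{capable} (capability $\Leftrightarrow {\sf Z^{\ast}}(\Lieq)=0$) and the equivalence in Theorem \ref{equivalent conditions} between injectivity of $\sigma_x$ and $\langle x\rangle\subseteq{\sf Z^{\ast}}(\Lieq)$. Your write-up is in fact more careful than the paper's in explicitly verifying that $\langle x\rangle$ is $c$-$\Lie$-central (so that Theorem \ref{equivalent conditions} applies) and that ${\sf Z^{\ast}}(\Lieq)\subseteq{\cal Z}_c^{\Lie}(\Lieq)$ (so that a non-zero element of ${\sf Z^{\ast}}(\Lieq)$ is an admissible witness in the converse direction).
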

\begin{proof}
Assume that $\ker \left(\sigma_x : {\cal M}_{\Lie}^{(c)} (\Lieq) \to {\cal M}_{\Lie}^{(c)} (\Lieq / \langle x \rangle)\right) = 0$  for any non-zero element $x \in Z_c^{\Lie}(\Lieq)$. By Theorem \ref{equivalent conditions}, $\sigma_x$ injective if and only if  $\langle x \rangle \subseteq Z^{\ast}(\Lieq)$, so $Z^{\ast}(\Lieq) \neq 0$, i.e. $\Lieq$ is not $c$-$\Lie$-capable  by Corollary \ref{capable}.

For every non-zero element $x \in Z_c^{\Lie}(\Lieq)$, we have $0 \neq \langle x \rangle \nsubseteqq Z^{\ast}(\Lieq) = 0$, then $\sigma_x$ cannot be an injective homomorphism.
\end{proof}

\begin{Pro}  \label{prop 10}
Let $\Lien$ be a $c$-$\Lie$-central two-sided ideal of a Leibniz algebra $\Lieq$. Then $\Lien \subseteq {\sf Z^{\ast}}(\Lieq)$ if and only if the natural surjection  $\Lieq \twoheadrightarrow \Lieq / \Lien$ induces an isomorphism $\gamma_{c+1}^{\Lie \ast}(\Lieq) \overset{\cong} \to \gamma_{c+1}^{\Lie \ast}(\Lieq / \Lien)$.
\end{Pro}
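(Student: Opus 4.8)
The plan is to reduce the statement to the characterisation already obtained inside the proof of Theorem \ref{equivalent conditions}. First I would fix a free presentation $0 \to \Lier \to \Lief \stackrel{\rho}\to \Lieq \to 0$ and, exactly as in Proposition \ref{exact sequences}, set $\Lies = \ker(\pi \circ \rho)$ with $\pi : \Lieq \twoheadrightarrow \Lieq/\Lien$ the canonical projection, so that $0 \to \Lies \to \Lief \to \Lieq/\Lien \to 0$ is a free presentation of $\Lieq/\Lien$, $\Lier \subseteq \Lies$, and $\Lien \cong \Lies/\Lier$. Then $\gamma_{c+1}^{\Lie \ast}(\Lieq) = \gamma_{c+1}^{\Lie}(\Lief)/\gamma_{c+1}^{\Lie}(\Lief, \Lier)$ and $\gamma_{c+1}^{\Lie \ast}(\Lieq/\Lien) = \gamma_{c+1}^{\Lie}(\Lief)/\gamma_{c+1}^{\Lie}(\Lief, \Lies)$; since $\gamma_{c+1}^{\Lie}(\Lief, -)$ is monotone in its ideal argument, $\gamma_{c+1}^{\Lie}(\Lief, \Lier) \subseteq \gamma_{c+1}^{\Lie}(\Lief, \Lies)$, and the map on Baer-invariants induced by $\Lieq \twoheadrightarrow \Lieq/\Lien$ (via the identity of $\Lief$) is precisely the canonical surjection between these two quotients.

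Next I would observe that this canonical surjection has kernel $\gamma_{c+1}^{\Lie}(\Lief, \Lies)/\gamma_{c+1}^{\Lie}(\Lief, \Lier)$, hence is an isomorphism if and only if $\gamma_{c+1}^{\Lie}(\Lief, \Lies) = \gamma_{c+1}^{\Lie}(\Lief, \Lier)$. Because $\Lien$ is $c$-$\Lie$-central in $\Lieq$, this kernel is exactly the first term of the four-term exact sequence (\ref{four term}), so the isomorphism condition is equivalent to the injectivity of ${\cal M}_{\Lie}^{(c)}(\Lieq) \to {\cal M}_{\Lie}^{(c)}(\Lieq/\Lien)$; by the equivalence of $(b)$ and $(c)$ in Theorem \ref{equivalent conditions} this in turn holds if and only if $\Lien \subseteq {\sf Z^{\ast}}(\Lieq)$. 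Equivalently, and more directly, one may quote the step inside the proof of Theorem \ref{equivalent conditions} showing that $\gamma_{c+1}^{\Lie}(\Lief, \Lies) = \gamma_{c+1}^{\Lie}(\Lief, \Lier)$ if and only if $\Lien \subseteq {\sf Z^{\ast}}(\Lieq)$, which itself rests on Corollary \ref{equality}.

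I do not expect a genuine obstacle here: the mathematical content is already packaged in Theorem \ref{equivalent conditions}, and the only points needing care are routine bookkeeping — checking that $\gamma_{c+1}^{\Lie}(\Lief, -)$ is monotone, and verifying that the homomorphism induced on Baer-invariants by the projection $\Lieq \twoheadrightarrow \Lieq/\Lien$ really is the canonical quotient map between $\gamma_{c+1}^{\Lie}(\Lief)/\gamma_{c+1}^{\Lie}(\Lief, \Lier)$ and $\gamma_{c+1}^{\Lie}(\Lief)/\gamma_{c+1}^{\Lie}(\Lief, \Lies)$ once the two free presentations are taken over the same free cover $\Lief$. After that identification, both implications follow immediately.
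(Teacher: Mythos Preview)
Your proposal is correct and follows essentially the same route as the paper: identify the induced map $\gamma_{c+1}^{\Lie \ast}(\Lieq) \to \gamma_{c+1}^{\Lie \ast}(\Lieq/\Lien)$ with the canonical surjection $\gamma_{c+1}^{\Lie}(\Lief)/\gamma_{c+1}^{\Lie}(\Lief,\Lier) \twoheadrightarrow \gamma_{c+1}^{\Lie}(\Lief)/\gamma_{c+1}^{\Lie}(\Lief,\Lies)$, observe its kernel is $\gamma_{c+1}^{\Lie}(\Lief,\Lies)/\gamma_{c+1}^{\Lie}(\Lief,\Lier)$, and then invoke the step in the proof of Theorem~\ref{equivalent conditions} that $\gamma_{c+1}^{\Lie}(\Lief,\Lies) = \gamma_{c+1}^{\Lie}(\Lief,\Lier)$ if and only if $\Lien \subseteq {\sf Z^{\ast}}(\Lieq)$. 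The paper's own proof is exactly this, only more tersely stated; your intermediate detour through the four-term sequence (\ref{four term}) and the equivalence $(b)\Leftrightarrow(c)$ is unnecessary but harmless.
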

\begin{proof}
By the proof of Theorem \ref{equivalent conditions},  $\gamma_{c+1}^{\Lie}(\Lief, \Lies) = \gamma_{c+1}^{\Lie}(\Lief, \Lier)$ if and only if $\Lien \subseteq {\sf Z^{\ast}}(\Lieq)$.

From  diagram  (\ref{free present diagr}),  the kernel of the induced surjective homomorphism $\frac{\gamma_{c+1}^{\Lie}(\Lief)}{\gamma_{c+1}^{\Lie}(\Lief, \Lier)} = \gamma_{c+1}^{\Lie \ast}(\Lieq)  \twoheadrightarrow \gamma_{c+1}^{\Lie \ast}(\Lieq / \Lien) = \frac{\gamma_{c+1}^{\Lie}(\Lief)}{\gamma_{c+1}^{\Lie}(\Lief, \Lies)} $ is $\frac{\gamma_{c+1}^{\Lie}(\Lief, \Lies)}{\gamma_{c+1}^{\Lie}(\Lief, \Lier)}$, thus the proposition is obvious.
\end{proof}

\begin{Co} \label{formula dimension}
Let $\Lien$ be a two-sided ideal of a finite-dimensional Leibniz algebra $\Lieq$ such that $\Lien \subseteq {\cal Z}_{c}^{\Lie}(\Lieq)$. Then $\Lien \subseteq {\sf Z^{\ast}}(\Lieq)$ if and only ${\rm dim} \left({\cal M}_{\Lie}^{(c)} (\Lieq / \Lien) \right) = {\rm dim} \left( {\cal M}_{\Lie}^{(c)} (\Lieq) \right) + {\rm dim} \left( \Lien \cap \gamma_{c+1}^{\Lie}(\Lieq) \right)$.
\end{Co}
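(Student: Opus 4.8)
The plan is to read the statement directly off the four-term exact sequence already produced for $c$-$\Lie$-central ideals, turn it into a dimension identity, and then invoke the characterisation of the vanishing of the leftmost term established in the proof of Theorem \ref{equivalent conditions}. Fix a free presentation $0 \to \Lier \to \Lief \overset{\rho}\to \Lieq \to 0$, and let $\Lies = \ker(\pi\circ\rho)$ for $\pi : \Lieq \twoheadrightarrow \Lieq/\Lien$ as in diagram (\ref{free present diagr}), so that $\Lier \subseteq \Lies$ and $\gamma_{c+1}^{\Lie}(\Lief,\Lier) \subseteq \gamma_{c+1}^{\Lie}(\Lief,\Lies)$.

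First I would note that, since $\Lien \subseteq {\cal Z}_c^{\Lie}(\Lieq)$, Proposition \ref{inclusion} shows that $0 \to \Lien \to \Lieq \to \Lieq/\Lien \to 0$ is a $c$-$\Lie$-central extension; hence Proposition \ref{exact sequences} (equivalently the exact sequence (\ref{four term})) applies and gives the exact sequence
\[
0 \to \frac{\gamma_{c+1}^{\Lie}(\Lief, \Lies)}{\gamma_{c+1}^{\Lie}(\Lief, \Lier)} \to {\cal M}_{\Lie}^{(c)} (\Lieq) \to {\cal M}_{\Lie}^{(c)} \left( \frac{\Lieq}{\Lien} \right) \to \Lien \cap \gamma_{c+1}^{\Lie}(\Lieq)\to 0 .
\]
By Corollary \ref{several properties}(a) the middle term ${\cal M}_{\Lie}^{(c)}(\Lieq)$ is finite-dimensional, and then so are its subspace $\gamma_{c+1}^{\Lie}(\Lief,\Lies)/\gamma_{c+1}^{\Lie}(\Lief,\Lier)$ and all remaining terms; taking the alternating sum of dimensions along the exact sequence yields
\[
{\rm dim}\!\left({\cal M}_{\Lie}^{(c)}(\Lieq/\Lien)\right) = {\rm dim}\!\left({\cal M}_{\Lie}^{(c)}(\Lieq)\right) + {\rm dim}\!\left(\Lien \cap \gamma_{c+1}^{\Lie}(\Lieq)\right) - {\rm dim}\!\left(\frac{\gamma_{c+1}^{\Lie}(\Lief,\Lies)}{\gamma_{c+1}^{\Lie}(\Lief,\Lier)}\right).
\]
Consequently the asserted equality ${\rm dim}\left({\cal M}_{\Lie}^{(c)}(\Lieq/\Lien)\right) = {\rm dim}\left({\cal M}_{\Lie}^{(c)}(\Lieq)\right) + {\rm dim}\left(\Lien\cap\gamma_{c+1}^{\Lie}(\Lieq)\right)$ holds if and only if ${\rm dim}\left(\gamma_{c+1}^{\Lie}(\Lief,\Lies)/\gamma_{c+1}^{\Lie}(\Lief,\Lier)\right)=0$, that is (using $\gamma_{c+1}^{\Lie}(\Lief,\Lier)\subseteq\gamma_{c+1}^{\Lie}(\Lief,\Lies)$) if and only if $\gamma_{c+1}^{\Lie}(\Lief,\Lies)=\gamma_{c+1}^{\Lie}(\Lief,\Lier)$.

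Finally I would quote the equivalence proved inside Theorem \ref{equivalent conditions}: $\gamma_{c+1}^{\Lie}(\Lief,\Lies)=\gamma_{c+1}^{\Lie}(\Lief,\Lier)$ if and only if $\Lien \subseteq {\sf Z^{\ast}}(\Lieq)$. Chaining the two equivalences gives the statement. I expect no genuine obstacle here; the only points requiring a little care are checking that all four spaces in the exact sequence are finite-dimensional (so that the Euler-characteristic argument is legitimate) and observing that the vanishing of the first term is exactly the condition $\gamma_{c+1}^{\Lie}(\Lief,\Lies)=\gamma_{c+1}^{\Lie}(\Lief,\Lier)$ that Theorem \ref{equivalent conditions} already translated into $\Lien\subseteq{\sf Z^{\ast}}(\Lieq)$.
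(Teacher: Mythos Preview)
Your argument is correct and essentially the same as the paper's: the paper cites Corollary~\ref{several properties}\textit{(d)} together with the equivalence from the proof of Theorem~\ref{equivalent conditions}, while you derive the same dimension identity directly from the four-term exact sequence~(\ref{four term}) before invoking that equivalence. The only cosmetic difference is that you re-prove Corollary~\ref{several properties}\textit{(d)} via the Euler-characteristic count rather than quoting it.
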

\begin{proof}
By the proof of Theorem \ref{equivalent conditions} and Corollary \ref{several properties} {\it (d)}.
\end{proof}


\section*{\bf Acknowledgements}
Second author was supported by Agencia Estatal de Investigación (Spain), grant MTM2016-79661-P (AEI/FEDER, UE, support included).


\begin{center}

\end{center}

\end{document}